\definecolor{blue}{rgb}{0,0,0.9}
\definecolor{red}{rgb}{0.9,0,0}
\definecolor{green}{rgb}{0,0.9,0}
\newcommand{\be}{\begin{equation}}
\newcommand{\ee}{\end{equation}}
\newcommand{\ben}{\begin{enumerate}}
\newcommand{\een}{\end{enumerate}}
\newcommand{\bfr}{\begin{frame}}
\newcommand{\efr}{\end{frame}}
\newcommand{\bit}{\begin{itemize}}
\newcommand{\eit}{\end{itemize}}
\newcommand{\argmin}{\mathop{\rm argmin}}
\newcommand{\R}{\mathbb R}
\newcommand{\E}{\mathbb{E}}
\newcommand{\cC}{\mathcal C}
\newcommand{\cD}{\mathcal D}
\newcommand{\cF}{\mathcal F}
\newcommand{\cH}{\mathcal H}
\newcommand{\cI}{\mathcal I}
\newcommand{\cJ}{\mathcal J}
\newcommand{\cM}{\mathcal M}
\newcommand{\cN}{\mathcal N}
\newcommand{\cS}{\mathcal S}
\def\norm#1{\left\|#1\right\|}
\def\inner#1{\left\langle#1\right\rangle}
\def\inprod#1#2{\langle#1,\,#2\rangle}
\newtheorem{definition}{Definition}[section]
\newtheorem{theorem}{Theorem}[section]
\newtheorem{corollary}{Corollary}[section]
\newtheorem{lemma}{Lemma}[section]
\newtheorem{proposition}{Proposition}[section]
\newtheorem{remark}{Remark}[section]
\newtheorem{assumption}{Assumption}[section]
\newtheorem{example}{Example}[section]
\def\conv{{\rm conv}}
\def\dist{{\rm dist}}
\def\dom{{\rm dom}}
\begin{document}

\title{On exploration of an interior mirror descent flow for stochastic nonconvex constrained problem}
\author{
Kuangyu Ding\thanks{School of Industrial Engineering, Purdue University, West Lafayette, IN, USA,
         47906 ({\tt kuangyud@u.nus.edu}).
         }, \quad 
Kim-Chuan Toh\thanks{Department of Mathematics, and Institute of 
Operations Research and Analytics, National
         University of Singapore, 
       Singapore
         119076 ({\tt mattohkc@nus.edu.sg}).
         }
        }
\date{\today}

\maketitle

\begin{abstract}
We study a nonsmooth nonconvex optimization problem defined over nonconvex constraints, where the feasible set is given by the intersection of the closure of an open set and a smooth manifold. By endowing the open set with a Riemannian metric induced by a barrier function, we obtain a Riemannian subgradient flow formulated as a differential inclusion, which remains strictly within the interior of the feasible set. This continuous dynamical system unifies two classes of iterative optimization methods, namely the Hessian barrier method and mirror descent scheme, by revealing that these methods can be interpreted as discrete approximations of the continuous flow. We explore the long-term behavior of the trajectories generated by this dynamical system and show that the existing deficient convergence properties of the Hessian barrier and mirror descent scheme can be unifily and more insightfully interpreted through these of the continuous trajectory. For instance, the notorious spurious stationary points \cite{chen2024spurious} observed in mirror descent scheme are interpreted as stable equilibria of the dynamical system that do not correspond to real stationary points of the original optimization problem. We provide two sufficient conditions such that these spurious stationary points can be avoided. In the absence of these regularity conditions, we propose a novel random perturbation strategy that ensures the trajectory converges (subsequentially) to an approximate stationary point. Building on these insights, we introduce two iterative Riemannian subgradient methods, form of interior point methods, that generalizes the existing Hessian barrier method and mirror descent scheme for solving nonsmooth nonconvex optimization problems. 
\medskip

\noindent{\bf Keywords:} Mirror descent, Hessian barrier, Riemannian optimization, Stochastic approximation, Nonsmooth nonconvex. 
\end{abstract}

\section{Introduction}

This paper investigates the convergence of trajectories generated by a continuous Hessian-Riemannian dynamical system under generic nonconvex constraints, as well as the convergence properties of its associated iterative optimization algorithms. Specifically, we focus on the generic constrained problem
\begin{equation}
\begin{aligned}
\min_{x\in\R^n}\;&f(x)\\
\text{s.t.}\;&x\in\overline C\cap\cM,
\end{aligned}
\label{prob:PCP}
\end{equation}
where \(C\subset\R^n\) is an open and convex set, \(f\colon\R^n\to\R\) is locally Lipschitz continuous, and
\[
\cM \;=\;\{\,x\in\R^n : c(x)=0\,\}
\]
is a smooth manifold, where the linear independence constraint qualification (LICQ) holds at every point of \(\cM\). A prototypical special case arises when \(C=\R^n_{++}=\{x_i>0\}\) and \(\cM=\{x:Ax=b\}\), in which case \(\cM\) is an affine subspace (also denoted by \(L\)). Problems of this form with linear constraints are fundamental in optimization and appear in countless applications.  Beyond the affine-linear setting, formulations of the type \eqref{prob:PCP} with a nontrivial manifold \(\cM\) are likewise ubiquitous in practice, for example in nonnegative PCA \cite{zass2006nonnegative,jiang2023exact}, sparse PCA \cite{zou2006sparse}, and low-rank matrix completion \cite{vandereycken2013low,cambier2016robust}.

When \(\cM\) is an affine subspace, a full survey of solution methods for \eqref{prob:PCP} is beyond the scope of this paper, but we note that the literature includes conditional gradient descent (Frank-Wolfe) \cite{frank1956algorithm,jaggi2013revisiting}, interior-point methods \cite{karmarkar1984new,nesterov1994interior,wright1997primal}, penalty methods \cite{fiacco1968nonlinear,xiao2025exact} including augmented Lagrangian \cite{powell1969method,rockafellar1973dual} and active-set schemes \cite{byrd1999active}, as well as mirror descent (Bregman-type) scheme \cite{nemirovskij1983problem,bauschke2017descent,lu2018relatively,bolte2018first}.  In this work, we focus on two representative discretizations, namely the mirror descent scheme and the recently proposed Hessian-barrier method \cite{bomze2019hessian,dvurechensky2025hessian}, both of which arise from the same underlying Riemannian gradient flow on the polyhedral feasible set:
\[
\dot x \;=\; -\,\mathrm{P}_x\,H(x)^{-1}\,\nabla f(x), 
\quad x(0)\in C\cap L,
\]
first studied in \cite{bolte2003barrier,alvarez2004hessian,attouch2004singular}.  Here \(f\) is assumed differentiable, the Riemannian metric is induced by the Hessian of a barrier function \(\phi\) for \(\overline C\), i.e.\ \(H(x)=\nabla^2\phi(x)\), and \(\mathrm{P}_x\) denotes projection onto the affine subspace \(L\) under this metric.

The Hessian-barrier method discretizes the above Riemannian gradient flow and was first proposed in \cite{bomze2019hessian} for nonconvex linearly constrained problems. It has more recently been applied to nonconvex conic programming \cite{dvurechensky2025hessian}.  Despite its broad applicability, its convergence guarantees are generally weaker than those of classical first-order methods: it only converges to a scaled KKT point, in which the feasibility of the dual slack variables may fail, unless additional regularity (e.g.\ convexity of \(f\), isolation of limit points, or strict complementarity) is imposed.  In the special case where \(\phi\) is the entropy barrier, the method reduces to a replicator-type descent algorithm \cite{tseng2011first}, which likewise exhibits deficient convergence behavior.

Mirror descent can be viewed as an alternative discretization of the same Riemannian Hessian-barrier flow.  First introduced by Nemirovski and Yudin \cite{nemirovskij1983problem} for nonsmooth convex optimization, it has become a foundational scheme in both convex \cite{benamou2015iterative,bauschke2017descent,dragomir2021quartic,dragomir2021fast,yang2022bregman,chu2023efficient,khoo2025bregman} and nonconvex \cite{bolte2018first,mukkamala2019bregman,ding2023nonconvex,zhang2024stochastic,ding2024stochastic} settings.  However, mirror descent is known to suffer from several pathological convergence behaviors: it is unable to, in general, achieve accelerated \(O(1/k^2)\) rates in smooth convex problems \cite{dragomir2021optimal}; commonly used Bregman stationarity measures may fail to capture true stationarity in nonconvex unconstrained problems \cite{zhang2024stochastic}; and iterates can become trapped at spurious stationary points, escaping them arbitrarily slowly \cite{chen2024spurious}.  Whether mirror descent iterates for general nonconvex constrained problems converge to the true stationary points remains an {open question}, largely due to the blowing up behavior of the kernel (distance generating) function near the boundary of its domain.

\begin{figure*}[tb]
		\begin{center}
			\setlength{\tabcolsep}{0.0pt}  
			\scalebox{1}{\begin{tabular}{ccc}
					\includegraphics[width=0.33\linewidth]{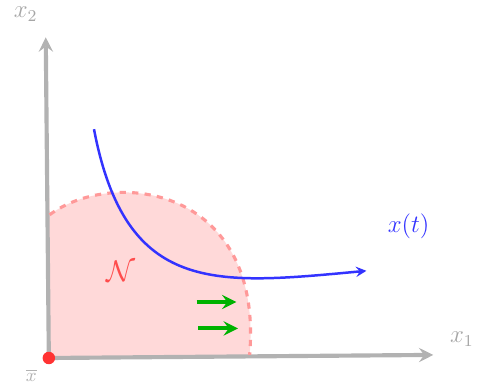}&
					\includegraphics[width=0.33\linewidth]{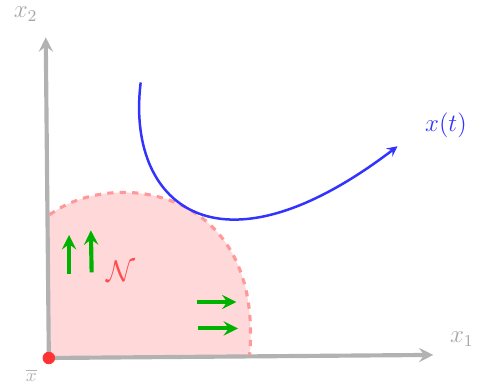}&
					\includegraphics[width=0.33\linewidth]{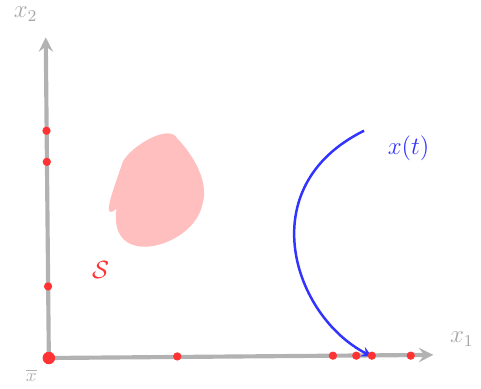}\\
					\multicolumn{1}{c}{\footnotesize{(a) Repelling from one direction}} &  \multicolumn{1}{c}{\footnotesize{(b) Repelling from all directions}}&
					\multicolumn{1}{c}{\footnotesize{(c) Isolated stable set over boundary}}                  
			\end{tabular}}
            		\caption{Properties of escaping from spurious stationary points. $\bar x$ represents a spurious stationary point, and $\mathcal{N}$ is a neighborhood of $\bar x$. (a) corresponds to Proposition \ref{prop:exiting}. (b) corresponds to Theorem \ref{thm:completmentarity}. (c) corresponds to Theorem \ref{thm:isolated}.} \label{fig:escaping}
		\end{center}
	\end{figure*}

In this work, we bring to the community's attention both the deficient convergence properties of the Hessian barrier method and the spurious stationary point phenomenon of mirror descent.  We attempt to explore the open question by studying a unified continuous dynamical system underlying both algorithms and show that their pathological behaviors coincide each being governed by the same equilibria (the \emph{stable set}, defined in Section 2) of the flow. Specifically, we investigate the behavior of the continuous trajectory near spurious stationary points. For instance, under the complementarity condition (as established in Theorem \ref{thm:completmentarity}) and the isolated condition (as shown in Theorem \ref{thm:isolated}), illustrated in Figure \ref{fig:escaping}.(c), the trajectory consistently converges to a true stationary point. When the complementarity condition is strictly violated (as demonstrated in Theorem \ref{thm:completmentarity}), the trajectory is repelled in all directions, ensuring that spurious stationary points are never limit points. In the general case, without specific regularity conditions, we prove that the trajectory is repelled in at least one direction, exiting any neighborhood of a spurious stationary point in finite time, as depicted in Figure \ref{fig:escaping}.(a) and formalized in Proposition \ref{prop:exiting}. Moreover, our framework extends the Riemannian Hessian-barrier gradient flow of \cite{alvarez2004hessian,bomze2019hessian} by allowing \(\cM\) to be a general manifold (rather than an affine subspace) and by admitting nonsmooth objectives \(f\). 

We now summarize our contributions:
\begin{enumerate}
    \item We propose a novel continuous dynamical system (in the form of a differential inclusion) for the constrained Problem \eqref{prob:PCP}, by leveraging the manifold structure of the feasible set. This differential inclusion is inspired by the Riemannian Hessian-barrier gradient flow for linearly constrained problems. By discretizing this inclusion, we derive two new optimization algorithms that are interior point methods in nature, in the sense that all iterates lie in the relative interior: \(\cM\cap C\).
    \item We interpret the pathological behaviors of both the Hessian barrier method and the mirror descent scheme within a unified framework. In particular, we show that the spurious stationary points observed in both algorithms correspond precisely to the points in the stable set of the continuous differential inclusion, but not the true stationary points of the original optimization problem. We analyze the escape dynamics near these spurious points, revealing a repelling phenomenon. As a byproduct, if a trajectory converges, it must converge to a true stationary point of the optimization problem. We further provide two sufficient conditions that guarantee avoidance of spurious stationary points, hence subsequential convergence to true stationary points. In the absence of these conditions, we introduce a novel perturbation strategy that ensures convergence to a true stationary point of a perturbed problem, that is the nearly stationary of the original problem in many instances.
    \item Building on the continuous trajectory analysis and leveraging stochastic approximation theory, we establish convergence results for the induced discrete algorithms. Under either of the two sufficient conditions, the discrete iterates subsequentially converge to true stationary points of \eqref{prob:PCP}. Without these conditions, our perturbation strategy ensures convergence to a stationary point of the perturbed problem, which approximates a nearly stationary point of the original problem in many instances.
\end{enumerate}

The remainder of the paper is organized as follows. In Section 2, we review notations, nonsmooth analysis, the Riemannian Hessian-barrier subgradient flow for linearly constrained problems, and stochastic approximation. Section 3 introduces the unified differential inclusion for Problem \eqref{prob:PCP}, establishes its connection to spurious stationary points, and studies the behavior of its trajectories. Section 4 presents avoidance properties of spurious stationary points. Section 5 derives two new optimization algorithms by discretizing the differential inclusion and provides their convergence analysis and discussion. A brief conclusion is provided in Section 6.

\section{Preliminaries}
\subsection{Notations}
Given a proper and lower semicontinuous function $f:\mathbb{R}^n\rightarrow\overline{\mathbb{R}} := (-\infty,\infty]$, we denote its domain as $\dom\,f=\{x:f(x)<\infty\}$. The Fenchel conjugate function of $f$ is defined as $f^*(y):=\sup\{\langle x,y\rangle - f(x):x\in\mathbb{R}^n\}$. For a set $\mathcal{S}\subset\mathbb{R}^n$, we use ${\overline{\mathcal{S}}}$ to denote its closure, ${\rm int}\,\mathcal{S}$ to denote the set of its interior points, and ${\rm conv}(\cS)$ to denote its convex hull. A function $f:\cS\rightarrow\R$ is said to be of class ${\cC}^k(\mathcal{S})$ if it is $k$ times differentiable and the $k$-th derivative is continuous on $\mathcal{S}$. When there is no ambiguity regarding the domain, we simply use the notation ${\cC}^k$. We use $\|\cdot\|$ to denote the Euclidean norm for vectors and the Frobenius matrix norm for matrices. The $n$-dimensional ball with radius $\delta$ is denoted by $\mathbb{B}^n_\delta$. For any given set-valued mapping $\cH: \R^n \rightrightarrows \R^n$ and any constant $\delta \geq 0$,  the $\delta$-expansion of $\cH$, denoted as $\cH^{\delta}$,  is defined as
\begin{equation*}
    \cH^{\delta}(x) := \{ w \in \R^n: \exists z \in \mathbb{B}_{\delta}(x), \, \mathrm{dist}(w, {\cH}(z))\leq \delta \}.
\end{equation*}

Let $(\Omega,\mathcal{F},\mathbb{P})$ be a probability space. Consider a stochastic process $\{\xi_k\}_{k\geq0}$ and a filtration $\{\mathcal{F}_k\}_{k\geq0}$, where $\mathcal{F}_k$ is defined by the $\sigma$-algebra $\mathcal{F}_k:=\sigma(\xi_0,\ldots,\xi_k)$ on $\Omega$, the conditional expectation is denoted as $\mathbb{E}[\cdot|\mathcal{F}_k]$.

\subsection{Nonsmooth analysis}

In this subsection, we recall some concepts from nonsmooth analysis, following \cite{clarke1990optimization,davis2020stochastic,bolte2021conservative}.

\begin{definition}[Clarke subdifferential {\cite{clarke1990optimization}}]
\label{Defin_Subdifferential}
Let \(f\colon\R^n\to\R\) be locally Lipschitz continuous. For any \(x\in\R^n\), the \emph{Clarke subdifferential} of \(f\) at \(x\) is
\[
\partial f(x)
\;=\;
\conv\Bigl\{\lim_{k\to\infty}\nabla f(x_k)\colon x_k\to x,\;f\text{ is differentiable at each }x_k\Bigr\}.
\]
\end{definition}

The next definition introduces the main function class for our objectives.

\begin{definition}[Path-differentiable {\cite{davis2020stochastic,bolte2021conservative}}]
\label{def:path-differentiable}
A locally Lipschitz function \(f\colon\R^n\to\R\) is \emph{path-differentiable} if it satisfies the chain rule: for every absolutely continuous curve \(z\colon[0,+\infty)\to\R^n\), the composition \(f\circ z\) is absolutely continuous and, for almost every \(t\ge0\),
\[
\frac{d}{dt}\bigl(f(z(t))\bigr)
\;=\;
\bigl\langle v,\,\dot z(t)\bigr\rangle
\quad\text{for all }v\in\partial f\bigl(z(t)\bigr).
\]
\end{definition}

In this work, we focus on path-differentiable functions. This class is broad enough to model objectives arising in many real-world applications. Indeed, any Clarke-regular function is path‐differentiable \cite[Section 5.1]{davis2020stochastic}. More generally, any function admitting a Whitney \(\cC^s\) stratification (for \(s\ge1\)) is path-differentiable \cite[Definition 5.6]{davis2020stochastic}, and any function definable in an \(o\)-minimal structure inherits such a stratification \cite[Definition 5.10]{davis2020stochastic}. Furthermore, one can extend beyond the Clarke subdifferential via the notion of a {conservative field}, as introduced in \cite{bolte2021conservative} and further studied in \cite{castera2021inertial,xiao2023adam,le2023nonsmooth,ding2023adam,xiao2024developing}; conservative fields often better capture "subgradient" produced by automatic differentiation. Since this extension lies beyond our present focus, we do not discuss it further.

\subsection{Riemannian Hessian-barrier gradient flow over linear constraints}
In this subsection, we review the Riemannian Hessian–barrier gradient flow for optimization problems with linear constraints, i.e. $\cM=L:=\qty{x\in\R^n:Ax=b}$, $C=\qty{x\in\R^n:x\geq0}$, where $A\in\R^{m\times n}$. The results extend the smooth case analysis of \cite{alvarez2004hessian,bomze2019hessian} to the nonsmooth setting. Additional basic properties are also provided, which will be used in the proofs of our main results. 

\begin{definition}\label{Bregman-distance-def}
(Legendre function and Bregman distance).  A function $\phi:C\rightarrow\R$ is called a Legendre function over $C$, if ${\rm int\,dom}(\phi)=C$, strictly convex in $C$, and $\phi\in\cC^1(C)$. Moreover, for any $x_k$ approaching the boundary of $C$, $\norm{\nabla\phi(x)}\to\infty$. The Bregman distance \cite{bregman1967relaxation} generated by $\phi$ is denoted as $\cD_\phi(x,y):{\rm dom}(\phi)\times{\rm dom}(\nabla\phi)\rightarrow[0,+\infty)$, where
\[
\cD_\phi(x,y)=\phi(x)-\phi(y)-\inprod{\nabla\phi(y)}{x-y}.
\]
\end{definition}

Let $H(x)=\nabla^2\phi(x)$, which is positive‐definite on \(C\). Define the local norm $\norm{z}_x:=\sqrt{\inner{\nabla^2\phi(x)z,z}}.$ Given a point $x$ and $d(x)\in\partial f(x)$, the search direction is determined by the following projection problem under a Riemannian metric $\inner{\cdot,\cdot}_{\nabla^2\phi(x)}$:
\begin{equation}
\label{eq:v(x)}
\begin{aligned}
\min_{v\in\R^n,Av=0}\;&\inner{d(x),v}+\frac{1}{2}\norm{v}_x^2
\end{aligned}
\end{equation}
The primal and dual solutions of \eqref{eq:v(x)} is given by 
\[
\begin{aligned}
v(x)&=-{\rm P}_xH(x)^{-1}d(x)\\
y(x)&=(AH(x)^{-1}A^T)^{-1}AH(x)^{-1}d(x),
\end{aligned}
\]
where ${\rm P}_x$ is the Riemannian projection onto the linear space $L_0:=\{z:Az=0\}$, i.e., the solution of the following projection problem:
\[
\begin{aligned}
\min_{z\in\R^n,Az=0}\;&\frac{1}{2}\norm{z-u}_x^2\\
\end{aligned}
\]
which has the closed form as follows 
\[
{\rm P}_x=I-H(x)^{-1}A^T(AH(x)^{-1}A^T)^{-1}A.
\]
Based on the optimality condition of \eqref{eq:v(x)}, we have
\[
d(x)+H(x)v(x)-A^Ty(x)=0,
\]
Let $v(x)=-H(x)^{-1}s(x)$, which is considered the dual slack variable in the mirror setting. So we have $s(x)=d(x)-A^Ty(x)$. 

Based on the notation above, we consider the following Riemannian Hessian-barrier gradient flow based on the search direction $v(x)$:
\begin{equation}
\label{eq:RGF1}
\dot x\in -{\rm P}_xH(x)^{-1}\partial f(x),\;x(0)\in C\cap L.
\end{equation}
\eqref{eq:RGF1} is equivalent to that there exists $d(x)\in\partial f(x)$ such that $\dot x$ is the optimal solution of  
\[
\begin{aligned}
\min_{Az=0}\;&\frac{1}{2}\norm{z+H(x)^{-1}d(x)}_x^2.
\end{aligned}
\]
Since the above problem is a strongly convex problem, it is equivalent to that $\dot x$ satisfies the KKT condition, i.e. there exists $y\in\R^m$, such that 
\[
\begin{aligned}
H(x)\left(\dot x+H(x)^{-1}d(x)\right)-A^Ty&=0\\
A\dot x&=0.
\end{aligned}
\]
This equivalent to 
\begin{equation}
\begin{aligned}
\frac{d}{dt}\partial\phi(x(t))\in-\nabla f(x(t))+N_{L},\;x(0)\in C\cap L.  
\end{aligned}
\label{eq:RGF2}
\tag{RGF2}
\end{equation}
We can verify that the two sets are equivalent by the optimality conditions of the definition of ${\rm P}_x$.
\begin{proposition}
\label{prop:equiv_stable}
$\{x\in L\cap C:0\in H(x)^{-1}(\partial f(x)+N_L)\}=\{x\in L\cap C:0\in {\rm P}_xH(x)^{-1}\partial f(x)\}$.
\end{proposition}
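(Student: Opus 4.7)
The plan is to reduce both conditions to the single statement $\partial f(x)\cap N_L\neq\emptyset$, by exploiting (i) the invertibility and positive definiteness of $H(x)$ on $C$, and (ii) the fact that since $L$ is an affine subspace, $N_L=\mathrm{range}(A^T)$ is a linear subspace, in particular closed under negation.

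First I would handle the left-hand side. Since $H(x)^{-1}$ is a bijection on $\R^n$, the inclusion $0\in H(x)^{-1}(\partial f(x)+N_L)$ is equivalent to the existence of $d\in\partial f(x)$ and $n\in N_L$ with $d+n=0$. Because $N_L$ is a subspace, $-n\in N_L$, so this is equivalent to $d\in \partial f(x)\cap N_L$. Thus the left-hand set equals $\{x\in L\cap C:\partial f(x)\cap N_L\neq\emptyset\}$.

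Next I would handle the right-hand side. The inclusion $0\in P_xH(x)^{-1}\partial f(x)$ means there exists $d\in\partial f(x)$ such that $P_xH(x)^{-1}d=0$. Using the closed-form expression
\[
P_xH(x)^{-1}d=H(x)^{-1}d-H(x)^{-1}A^T(AH(x)^{-1}A^T)^{-1}AH(x)^{-1}d,
\]
setting this to zero and multiplying through by $H(x)$ on the left yields $d=A^Ty$ with $y=(AH(x)^{-1}A^T)^{-1}AH(x)^{-1}d$. Hence $d\in\mathrm{range}(A^T)=N_L$, so again $\partial f(x)\cap N_L\neq\emptyset$. Conversely, if $d\in\partial f(x)\cap N_L$, then $d=A^Ty$ for some $y\in\R^m$, and a direct computation shows $P_xH(x)^{-1}A^Ty=0$, so $0\in P_xH(x)^{-1}\partial f(x)$.

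Combining the two reductions, both sets coincide with $\{x\in L\cap C:\partial f(x)\cap N_L\neq\emptyset\}$, proving the equality. There is no real obstacle here; the whole argument is a bookkeeping exercise once one observes that $N_L$ is a subspace (so signs are irrelevant) and $H(x)^{-1}$ is a linear isomorphism (so it can be dropped from the first condition and absorbed in the algebraic identity defining $P_x$ in the second).
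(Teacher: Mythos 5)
Your proof is correct and follows essentially the same route as the paper's: both arguments reduce to the facts that $N_L=\mathrm{range}(A^T)$, that $H(x)^{-1}$ is invertible on $C$, and that $P_xH(x)^{-1}A^T=0$. The only difference is organizational --- you funnel both sets through the common condition $\partial f(x)\cap N_L\neq\emptyset$ and work from the closed-form expression for $P_x$ throughout, whereas the paper invokes the optimality conditions of the projection problem for one direction; the mathematical content is the same.
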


\begin{proof}
Let \(z\in L\cap C\). For any $z\in\{x:0\in {\rm P}_xH(x)^{-1}\partial f(x)\}$, there exists $d(z)\in\partial f(z)$, such that 
\[
0=\argmin\qty{\frac{1}{2}\norm{u-H(z)^{-1}d(z)}_x^2:Au=0}.
\]
 By optimality condition of above optimization problem, there exists $y\in\R^m$ such that 
\[
H(z)(0-H(z)^{-1}(x)d(z))+A^Ty=0.
\]
Thus, we have $0=H(z)^{-1}(d(z)-A^Ty)\in H(z)^{-1}(\partial f(z)-A^Ty)$. 

For any $z\in\{x:0\in H(x)^{-1}(\partial f(x)+N_L)\}$, there exist $d(x)\in\partial f(x)$ and $y\in\R^m$, such that $0=H(z)^{-1}(d(z)+A^Ty)$. Multiply ${\rm P}_z$ for both sides, we have 
\[
\begin{aligned}
0=&{\rm P}_z(H(z)^{-1}d(z))+{\rm P}_z(H(z)^{-1}A^Ty)\\
=&{\rm P}_z(H(z)^{-1}d(z))+\qty(I-H(z)^{-1}A^T(AH(z)^{-1}A^T)^{-1}A)H(z)^{-1}A^Ty\\
=&{\rm P}_z(H(z)^{-1}d(z)).
\end{aligned}
\]
Thus, $0\in {\rm P}_zH(z)^{-1}\partial f(z)$. This completes the proof.
\end{proof}

\begin{remark}[Extension to the boundary] 
In many applications, the operator ${\rm P}_x\,H(x)^{-1}$ admits a continuous extension from \(L\cap C\) to the closure \(L\cap\overline C\).  Intuitively, as \(x\) approaches the boundary of \(C\), the barrier \(\phi(x)\) diverges to \(+\infty\), causing its Hessian inverse \(\nabla^2\phi(x)^{-1}\) to remain well-controlled (even flattening out), rather than blowing up. Under this extension hypothesis and by Proposition~\ref{prop:equiv_stable}, the equivalence naturally carries over to the boundary, yielding
\[
\{\,x\in L\cap\overline C : 0\in {\rm P}_x\,H(x)^{-1}\,\partial f(x)\}
\;:=\;
\{\,x\in L\cap\overline C : 0\in H(x)^{-1}\bigl(\partial f(x)+N_L\bigr)\}.
\]
Throughout this paper, we assume that \({\rm P}_x\,\nabla^2\phi(x)^{-1}\) extends continuously to \(\cM\cap\overline C\).
\end{remark}

\subsection{Stochastic approximation in nonsmooth optimization}

In this subsection, we review  some fundamental concepts from stochastic approximation theory that are essential for the proofs in this paper.  The concepts discussed here are mainly adopted from \cite{benaim2005stochastic}. we refer the reader to \cite{benaim2006stochastic,borkar2009stochastic,davis2020stochastic} for more details on the stochastic approximation technique.  We begin by defining the trajectory of a differential inclusion, its stable set, and the associated Lyapunov functions, which will play a central role in our subsequent convergence analysis.

\begin{definition}
For any graph-closed set-valued mapping $\cH: \R^n \rightrightarrows \R^n$,  we say that an absolutely continuous path $x(t)$ in $\R^n$ is a trajectory of the differential inclusion 
\begin{equation}
    \label{Eq_def_DI}
    \frac{\mathrm{d} x}{\mathrm{d}t} \in -\cH(x),
\end{equation}
with initial point $x_0$ if $x(0) = x_0$, and $\dot{x}(t) \in \cH(x(t))$ holds for almost every $t\geq 0$. The $\omega$-limit set for $x(t)$ starting at $x(0)=x_0$ is defined as
\[
\omega(x_0):=\qty{y:\lim_{k\to\infty}x(t_k)=y,\;x(0)=x_0}.
\]
\end{definition}

\begin{definition}
    \label{Defin_Lyapunov_function}
     A continuous function $\Psi:\R^n \to \R$ is a Lyapunov function for the differential inclusion \eqref{Eq_def_DI}, with respect to $\cH^{-1}\{0\}$, called  stable set, if it satisfies the following conditions:
    \begin{enumerate}
        \item For any trajectory $\gamma$ of \eqref{Eq_def_DI} with 
        $\gamma(0) \in \R^n$ , it holds that $\phi(\gamma(t)) \leq \phi(\gamma(0))$ for any $t\geq0$.
        \item For any trajectory $\gamma$ of \eqref{Eq_def_DI} with 
        $\gamma(0) \in \R^n$ with $\gamma(0) \notin \cH^{-1}\{0\}$, there exists a constant $T>0$ such that {$\phi(\gamma(T)) < \sup_{t\in[0,T]}\phi(\gamma(t)) =\phi(\gamma(0))$}.
    \end{enumerate}
\end{definition}

The corresponding discrete scheme of \eqref{Eq_def_DI} is given by
\begin{equation}
   \label{Eq:general_iterative}  x_{k+1}=x_k-\eta_k(d_k+\xi_k),
\end{equation}
where $d_k$ is an evaluation of $\cH(x_k)$ with possible inexactness, and $\xi_k$ is the stochastic noise. For a positive sequence $\{\eta_k\}$, we define $\lambda_\eta(0):=0$, $\lambda_\eta(k):=\sum_{i=0}^{k-1}\eta_i$ for $k\geq1$, and $\Lambda_\eta(t):=\sup\{k:\lambda_\eta(k)\leq t\}$. In other word, $\Lambda_\eta(t)=k$ if and only if $\lambda_\eta(k)\leq t<\lambda_\eta(k+1)$. To establish the convergence of the discrete sequence $\{x_k\}$, a key idea is to show that the linear interpolation of the sequence $\{x_k\}$ defined by
\begin{equation}
{z(t)}:=x_k+\frac{t-\lambda_\eta(k)}{\eta_k}(x_{k+1}-x_k),\;t\in[\lambda_\eta(k),\lambda_\eta(k+1))
\label{Eq:x(t)_def}
\end{equation}
is a perturbed solution \cite{benaim2005stochastic} to the associated differential inclusion, which is guaranteed by the following assumptions.
\begin{assumption}
\label{Assumption: DI}
    \begin{enumerate}
        \item The sequences $\{x_k\}$ and $\{d_k\}$ are uniformly bounded.
        \item The stepsize $\{\eta_k\}$ satisfies $\sum_{k=0}^\infty\eta_k=\infty$ and $\lim_{k\rightarrow\infty}\eta_k=0$.
        \item For any $T>0$, the noise sequence $\{\xi_k\}$ satisfies 
        \begin{equation*}
            \lim_{s\rightarrow\infty}\sup_{s\leq i\leq\Lambda_\eta(\lambda_\eta(s)+T)}\norm{\sum_{k=s}^i\eta_k\xi_k}=0.
        \end{equation*}       
        \item The set-valued mapping $\mathcal{H}$ has a closed graph. Additionally, for any unbounded increasing sequence $\{k_j\}$ such that $\{x_{k_j}\}$ converges to $\bar x$, it holds that 
        \begin{equation*}
            \lim_{N\rightarrow\infty}{\rm dist}\left(\frac{1}{N}\sum_{j=1}^Nd_{k_j},\mathcal{H}(\bar x)\right)=0.
        \end{equation*}
    \end{enumerate}
\end{assumption}

\begin{assumption}\label{assumption_Sard_Lyapunov}
There exists a Lyapunov function $\Psi:\R^n\to\R$ for \eqref{Eq_def_DI}, such that the set $\{\Psi(x): \text{ for $x$ such that $0\in\cH(x)$} \}$ has empty interior in $\R$.
\end{assumption}

The following theorem summarizes the results in \cite{duchi2018stochastic,davis2020stochastic}, which establishes the convergence of $\{x_k\}$ generated by \eqref{Eq:general_iterative}. We omit the proof for brevity.
\begin{theorem}\label{convergence-thm-func-val}
Suppose Assumptions \ref{Assumption: DI} and \ref{assumption_Sard_Lyapunov} hold. Then any limit point of $\{x_k\}$ lies in $\mathcal{H}^{-1}(0)$ and the function values $\{\Psi(x_k)\}_{k\geq1}$ converge.
\end{theorem}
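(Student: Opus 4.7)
The plan is to invoke the stochastic approximation framework of Benaim-Hofbauer-Sorin \cite{benaim2005stochastic}, which links a discrete recursion of the form \eqref{Eq:general_iterative} to trajectories of its associated mean-field differential inclusion \eqref{Eq_def_DI}. First I would consider the piecewise-linear interpolation $z(t)$ defined in \eqref{Eq:x(t)_def} and show, under Assumption \ref{Assumption: DI}, that $z(t)$ is a \emph{perturbed solution} (equivalently, an asymptotic pseudotrajectory) of $\dot x \in -\cH(x)$. The boundedness of $\{x_k\}$ and $\{d_k\}$ (item~1) gives uniform Lipschitz control of $z(\cdot)$ on bounded time intervals; diminishing step sizes with divergent sum (item~2) guarantee that the time change $\lambda_\eta$ is onto $[0,\infty)$ so that arbitrarily long windows of $z$ are available as $s\to\infty$; the noise condition (item~3) makes the cumulative noise $\sum \eta_k \xi_k$ vanish uniformly over any bounded time window; and graph-closedness plus the Cesaro averaging property of item~4 ensures that limits of the $d_k$'s along convergent subsequences of $\{x_k\}$ are selections of $\cH$. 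Combining these, for every $T>0$ I can construct, on each shifted window $[\lambda_\eta(s),\lambda_\eta(s)+T]$, a measurable selection $w(\tau)\in \cH(z(\lambda_\eta(s)+\tau))$ such that $z(\lambda_\eta(s)+t) - z(\lambda_\eta(s)) + \int_0^t w(\tau)\,d\tau$ converges to $0$ uniformly on $[0,T]$ as $s\to\infty$.

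Second, once $z(t)$ has been identified as a perturbed solution, I would invoke the classical characterization of its limit set: $L(z):=\bigcap_{T\geq 0}\overline{\{z(t): t\geq T\}}$ is nonempty, compact, and \emph{internally chain transitive} for the flow of $-\cH$, and in particular every accumulation point of $\{x_k\}$ lies in $L(z)$.

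Third, I would exploit the Lyapunov structure. By Definition \ref{Defin_Lyapunov_function}, $\Psi$ is nonincreasing along trajectories and strictly decreases along trajectories not starting in $\cH^{-1}(0)$. A standard argument (see \cite{benaim2005stochastic}) then shows that on any internally chain transitive set $L$ the restriction $\Psi|_L$ must be constant; moreover $L\subset \cH^{-1}(0)$ unless $\Psi(\cH^{-1}(0))$ is fat enough to contain an interval in $\R$. Assumption \ref{assumption_Sard_Lyapunov} (Sard-Lyapunov) precisely rules this out, so $L(z)\subset \cH^{-1}(0)$ and $\Psi\equiv c$ on $L(z)$. Consequently every limit point of $\{x_k\}$ lies in $\cH^{-1}(0)$, and by continuity of $\Psi$ together with the almost-monotone behavior of $\Psi\circ z$ along the perturbed solution, $\Psi(x_k)\to c$.

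The main obstacle, and the step deserving the most care, is the verification of the perturbed-solution property under the \emph{set-valued} inexact evaluation $d_k \approx \cH(x_k)$; the Cesaro averaging hypothesis in Assumption \ref{Assumption: DI}.4 is the critical ingredient allowing $\cH$ to be genuinely multivalued (as it is for subdifferentials of path-differentiable functions), and correctly matching the averaged increments of $z(\cdot)$ to a measurable selection of $\cH$ along $z(\cdot)$ is where the technical subtleties of the argument concentrate.
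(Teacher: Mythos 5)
Your outline is correct and is precisely the argument the paper relies on: the paper omits the proof of this theorem and cites \cite{duchi2018stochastic,davis2020stochastic}, whose proof proceeds exactly as you describe (interpolated process is a perturbed solution/asymptotic pseudotrajectory under Assumption \ref{Assumption: DI}, its limit set is internally chain transitive, and the Lyapunov function together with the empty-interior condition of Assumption \ref{assumption_Sard_Lyapunov} forces that set into $\cH^{-1}(0)$ with $\Psi$ constant on it). You have also correctly identified the delicate step, namely matching the Ces\`aro-averaged inexact evaluations $d_k$ to a measurable selection of $\cH$ along the interpolation.
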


\section{Interior Riemannian  subgradient flow}

In this section, we propose an interior Riemannian subgradient flow on the manifold \(\cM\cap C\), extending the classical Hessian-barrier dynamical system to embedded smooth manifolds.  Since \(C\subset\R^n\) is an open convex set, it is itself a smooth manifold.  To ensure trajectories stay in the interior of \(C\), we select a barrier function \(\phi\) on \(\overline C\) and equip \(C\) with the Riemannian metric $\langle u,v\rangle_x \;=\; u^\top\nabla^2\phi(x)\,v,$ so that the unconstrained Riemannian subgradient at \(x\in C\) is \(\nabla^2\phi(x)^{-1}\partial f(x)\). Because \(\cM\) is a smooth embedded submanifold of \(\R^n\), the intersection \(\cM\cap C\) is also a smooth manifold.  Restricting the metric \(\langle\cdot,\cdot\rangle_x\) to \(\cM\cap C\) and projecting onto the tangent space \(T_x\cM\) yields the restricted Riemannian subgradient ${\rm P}_{T_x\cM}\bigl(\nabla^2\phi(x)^{-1}\partial f(x)\bigr),$ where \({\rm P}_{T_x\cM}\) is the projection under \(\langle\cdot,\cdot\rangle_x\).  The resulting differential inclusion is that for almost any $t>0$:
\begin{equation}
\label{eq:DI_manifold}
\dot x(t)\;\in\;-\,{\rm P}_{T_{x(t)}\cM}\,\nabla^2\phi\bigl(x(t)\bigr)^{-1}\,\partial f\bigl(x(t)\bigr),
\quad x(0)\in \cM\cap C.
\end{equation}
Here, \(x(0)\) lies in the relative interior of the feasible set \(\cM\cap\overline C\). 

When \(\cM\) is an affine space and \(f\) is differentiable, the differential inclusion \eqref{eq:DI_manifold} reduces to the Riemannian Hessian-barrier gradient flow originally introduced by  \cite{alvarez2004hessian}, and subsequently studied in the context of conic programming by  \cite{bomze2019hessian,dvurechensky2025hessian}.  Later, we will show that this unified continuous dynamical system captures the asymptotic behaviors of both Hessian-barrier methods and the mirror descent scheme, thereby allowing us to investigate the properties of the continuous trajectories and, in turn, gain a deeper understanding of the convergence properties of the corresponding iterative optimization algorithms.  The connecting bridge between continuous and discrete dynamics is provided by stochastic approximation theory \cite{benaim2005stochastic,borkar2009stochastic}.  Moreover, since our differential inclusion is formulated on a general smooth manifold with boundary, it naturally induces new iterative algorithms for nonconvex, nonsmooth optimization over \(\cM\cap\overline C\).  Before deriving these algorithms, we first establish some basic properties of the flow \eqref{eq:DI_manifold}.

\subsection{Basic properties of interior mirror descent flow}
We begin by showing that the differential inclusion \eqref{Eq_def_DI} is well-posed, i.e.\ there exists a global solution, such that $\{x(t):t\in[0,\infty)\}$ existing.
This result is a natural nonsmooth extension of Theorem 4.1 in \cite{alvarez2004hessian}. Indeed, by \cite{aubin2012differential}, the trajectory of \eqref{Eq_def_DI} exists whenever $\cH$ is a locally bounded graph-closed set-valued mapping. To proceed, we introduce the following assumption to ensure the well-posedness. 
\begin{assumption}
\label{assumption:well_posed}
    \begin{enumerate}
        \item $\phi$ is Legendre function over $C$, $\phi\in\cC^2(C)$, and $\nabla^2\phi(x)$ is locally Lipschitz continuous. $f$ is path-differentiable.
        \item One of the following holds:
        \begin{enumerate}
            \item $\{x\in\cM\cap C:f(x)\leq f(x_0)\}$ is bounded.
            \item $\norm{\nabla^2\phi(x)^{-1}}\leq K\norm{x}+M$, for some $K,M>0$.
        \end{enumerate}
    \end{enumerate}
\end{assumption}

It is worth noting that Assumption \ref{assumption:well_posed}.2 can be replaced by the condition that $x(t)$ is uniformly bounded if $\{x(t):t\in[0,T]\}$ exists for some $T>0$. Assumption \ref{assumption:well_posed}.2 provides a sufficient condition to guarantee the boundedness of $x(t)$. For example, if the manifold $\mathcal{M} \cap C$ is itself bounded, this condition is trivially satisfied. With these assumptions in place, we state the following proposition. Since the proof follows the same lines as in \cite[Theorem 4.1]{alvarez2004hessian}, we omit it here. 
\begin{proposition}
\label{prop:well_pose}
Suppose Assumption \ref{assumption:well_posed} holds. Then, the differential inclusion \eqref{eq:DI_manifold} is well-posed for all $t \in (0, \infty)$, and the trajectory satisfies $x(t) \in C \cap \mathcal{M}$. Furthermore, the function $f(x(t))$ is non-increasing and converges as $t \to \infty$.
\end{proposition}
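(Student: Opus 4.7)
The plan is to follow the four-step argument of \cite[Theorem 4.1]{alvarez2004hessian}, adapted to our nonsmooth objective and to a general (possibly curved) manifold $\cM$. The steps are (i) local existence of an absolutely continuous trajectory in $\cM\cap C$, (ii) monotone decrease of $f$, (iii) barrier control confining the trajectory to the interior of $C$, and (iv) global extension on $[0,\infty)$ with convergence of $f(x(t))$. For (i), I would verify that
\[
\cH(x) \;:=\; {\rm P}_{T_x\cM}\,\nabla^2\phi(x)^{-1}\,\partial f(x)
\]
is nonempty, convex, compact-valued, and graph-closed on the open set $\cM\cap C$: $\partial f$ inherits these properties from local Lipschitz continuity of $f$; $\nabla^2\phi^{-1}$ is continuous by Assumption \ref{assumption:well_posed}; and LICQ renders the metric projection continuous via the explicit formula
\[
{\rm P}_{T_x\cM} \;=\; I - \nabla^2\phi(x)^{-1}\nabla c(x)^\top \bigl[\nabla c(x)\nabla^2\phi(x)^{-1}\nabla c(x)^\top\bigr]^{-1}\nabla c(x).
\]
Aubin--Cellina's theorem then produces a local solution.

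For (ii), I would select a measurable $d(t)\in\partial f(x(t))$ realising $\dot x(t) = -{\rm P}_{T_x\cM}\nabla^2\phi(x(t))^{-1}d(t)$ and invoke path-differentiability (Definition \ref{def:path-differentiable}) together with the self-adjointness and idempotence of ${\rm P}_{T_x\cM}$ under $\langle\cdot,\cdot\rangle_{x(t)}$ to obtain $\tfrac{d}{dt}f(x(t)) = \langle d(t),\dot x(t)\rangle = -\|\dot x(t)\|_{x(t)}^2 \le 0$. For (iii), multiplying the representation of $\dot x$ by $\nabla^2\phi(x)$ produces the dual flow
\[
\tfrac{d}{dt}\nabla\phi(x(t)) \;=\; -d(t) + \nabla c(x(t))^\top y(t),
\]
with $y(t)=\bigl[\nabla c(x)\nabla^2\phi(x)^{-1}\nabla c(x)^\top\bigr]^{-1}\nabla c(x)\nabla^2\phi(x)^{-1}d(t)$ the Lagrange multiplier of the projection subproblem. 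Provided $x(t)$ stays in a compact subset of $\cM\cap C$, the right-hand side is uniformly bounded, so $\|\nabla\phi(x(t))\|$ grows at most linearly in $t$; the Legendre property $\|\nabla\phi(x)\|\to\infty$ as $x\to\partial C$ then forces $x(t)$ to remain in a compact subset of $\cM\cap C$ on every finite interval. Compactness of the trajectory itself comes either from Assumption \ref{assumption:well_posed}.2(a) via the descent in step (ii), or under 2(b) from a Gronwall bound on $\|\dot x\|\le \|\nabla^2\phi(x)^{-1}\|\,\|d(t)\|$.

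Iterating local existence with the interior-confinement bound then produces a global solution on $[0,\infty)$ with $x(t)\in\cM\cap C$, and since $f$ is locally Lipschitz and hence bounded below on the compact trajectory, the non-increasing map $t\mapsto f(x(t))$ converges as $t\to\infty$. The hardest part will be step (iii): the bound on the multiplier $y(t)$ and the bound on $\|\nabla\phi(x(t))\|$ are intertwined, since $y(t)$ involves $\nabla^2\phi(x)^{-1}$ whose conditioning degrades as $x\to\partial C$. Compared with the affine case of \cite{alvarez2004hessian}, curvature of $\cM$ requires propagating continuity estimates on the Gram matrix $\nabla c(x)\nabla^2\phi(x)^{-1}\nabla c(x)^\top$ along the trajectory, thereby coupling the interior-confinement argument to the local Lipschitz bound on $\nabla^2\phi$ from Assumption \ref{assumption:well_posed}.1.
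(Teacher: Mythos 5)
Your overall architecture---local existence via Aubin--Cellina for the convex-compact-valued, graph-closed map $\cH(x)={\rm P}_{T_x\cM}\nabla^2\phi(x)^{-1}\partial f(x)$, descent of $f$ via path-differentiability and self-adjointness of the metric projection, barrier confinement, and continuation---is exactly the route the paper intends: it omits the proof and defers to \cite[Theorem~4.1]{alvarez2004hessian}, whose four steps you reproduce. Steps (i), (ii) and (iv) are sound as sketched; in particular the identity $\tfrac{d}{dt}f(x(t))=\langle d(t),\dot x(t)\rangle=-\|\dot x(t)\|_{x(t)}^2$ is the same computation the paper uses later in its Lyapunov arguments.

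However, step (iii) as you have written it is circular, and you have flagged the symptom without supplying the cure. You bound $\bigl\|\tfrac{d}{dt}\nabla\phi(x(t))\bigr\|=\|{-d(t)}+\nabla c(x(t))^\top y(t)\|$ ``provided $x(t)$ stays in a compact subset of $\cM\cap C$,'' but a compact subset of the \emph{open} set $C$ is bounded away from $\partial C$, which is precisely the conclusion you are after; on a set that merely has compact closure in $\R^n$ the multiplier $y(t)$ need not be bounded, because the Gram matrix $\nabla c(x)\nabla^2\phi(x)^{-1}\nabla c(x)^\top$ can degenerate as $\nabla^2\phi(x)^{-1}$ flattens near $\partial C$ (the natural estimate only controls $\|\nabla c^\top y\|$ in the degenerate norm $\|\cdot\|_{\nabla^2\phi(x)^{-1}}$, not in the Euclidean norm you need). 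The device in \cite{alvarez2004hessian} that breaks this deadlock is to differentiate not $\|\nabla\phi(x(t))\|$ but the Bregman distance $t\mapsto \cD_\phi(a,x(t))$ to a fixed point $a\in\cM\cap C$: one gets $\tfrac{d}{dt}\cD_\phi(a,x(t))=\langle d(t)-\nabla c(x(t))^\top y(t),\,a-x(t)\rangle$, and in the affine case the multiplier term is annihilated because $a-x(t)\in\ker A$, leaving a bound that depends only on $\|d(t)\|$ and the (already established) boundedness of the trajectory in $\R^n$; the Legendre property then gives $\cD_\phi(a,x_k)\to\infty$ as $x_k\to\partial C$, which confines $x(t)$ to the interior on finite intervals without ever bounding $y(t)$. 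You should replace your step (iii) with this argument, and note explicitly that for a genuinely curved $\cM$ the term $\langle y(t),\nabla c(x(t))(a-x(t))\rangle$ no longer vanishes exactly (it is only second order in $\|a-x(t)\|$), so some residual control of $y(t)$, or a localization of the argument, is still required there---a point the paper itself leaves implicit.
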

This proposition ensures that the trajectory remains within the relative interior of the feasible set $C \cap \mathcal{M}$. Consider the case where $\mathcal{M}$ is an affine space. Here, the tangent space $T_x \mathcal{M} = \mathcal{M}$, and the projection $\mathrm{P}_{T_x \mathcal{M}}$ simplifies to $\mathrm{P}_x$, as defined in Section 2.3. A natural discrete approximation of the differential inclusion \eqref{eq:DI_manifold} is given by:
\[
x_{k+1}\in x_k-\eta_k {\rm P}_x\nabla^2\phi(x)^{-1}\partial f(x_k),
\]
where $\eta_k > 0$ is a step size. This scheme exactly corresponds to the Hessian-barrier method proposed by \cite{bomze2019hessian}, when $f$ is differentiable. However, the connection to the mirror descent scheme is not immediately obvious from this form. To make this connection explicit, an equivalent reformulation of \eqref{eq:DI_manifold} can be derived.
\begin{proposition}
\label{prop:DI_equiv}
Equations \eqref{eq:DI_manifold} is equivalent to the following differential inclusion
\begin{equation}
\label{eq:DI_manifold_Breg}
\frac{d}{dt}\nabla\phi(x(t))\in-\partial f(x(t))+N_{x(t)}\cM,\;x(0)\in C\cap\cM,
\end{equation}
where $ N_x \mathcal{M} = (T_x \mathcal{M})^\perp $ denotes the normal space to the tangent space at $ x $.
\end{proposition}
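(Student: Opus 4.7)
The plan is to prove the equivalence pointwise (at almost every $t$) by combining two ingredients: the ordinary chain rule applied to $\nabla\phi(x(t))$, and a variational characterization of the Riemannian projection $P_{T_x\cM}$ induced by the metric $\langle u,v\rangle_x = u^\top\nabla^2\phi(x)v$. Since $\phi \in \cC^2(C)$ by Assumption~\ref{assumption:well_posed} and $x(t)$ stays in $C$ by Proposition~\ref{prop:well_pose}, for any absolutely continuous trajectory one has $\frac{d}{dt}\nabla\phi(x(t)) = \nabla^2\phi(x(t))\dot x(t)$ for a.e.\ $t$. Thus \eqref{eq:DI_manifold_Breg} is equivalent to the algebraic inclusion $\nabla^2\phi(x(t))\dot x(t) \in -\partial f(x(t)) + N_{x(t)}\cM$ holding almost everywhere. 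The key observation about $P_{T_x\cM}$ is: for any $u\in\R^n$, the vector $v = P_{T_x\cM} u$ is characterized by $v\in T_x\cM$ together with $\nabla^2\phi(x)(v-u) \in N_x\cM$ (the Euclidean normal space). This is simply the KKT condition of the strongly convex projection problem $\min\{\tfrac{1}{2}\|v-u\|_x^2 : v \in T_x\cM\}$, mirroring the derivation already carried out in Section~2.3 for the affine case.

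With these two facts, the equivalence reduces to a short calculation in both directions. For the forward implication, if \eqref{eq:DI_manifold} holds then there exists $d(t)\in\partial f(x(t))$ with $\dot x(t) = -P_{T_{x(t)}\cM}\nabla^2\phi(x)^{-1}d(t)$; applying the projection characterization to $u = \nabla^2\phi(x)^{-1}d$ yields $\nabla^2\phi(x)\dot x + d \in N_x\cM$, which is precisely the algebraic form of \eqref{eq:DI_manifold_Breg}. For the reverse implication, suppose $\nabla^2\phi(x)\dot x = -d + n$ with $d\in\partial f(x)$ and $n\in N_x\cM$. I first observe that $x(t)\in\cM$ forces $\dot x(t)\in T_{x(t)}\cM$ for a.e.\ $t$: differentiating the identity $c(x(t))=0$ gives $Dc(x(t))\dot x(t)=0$, which identifies $\dot x(t)$ with an element of $T_{x(t)}\cM$ under LICQ. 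Combining $\dot x \in T_x\cM$ with $\nabla^2\phi(x)(-\dot x) - d \in N_x\cM$, the projection characterization then yields $-\dot x = P_{T_x\cM}\nabla^2\phi(x)^{-1}d$, hence $\dot x\in -P_{T_x\cM}\nabla^2\phi(x)^{-1}\partial f(x)$, establishing \eqref{eq:DI_manifold}.

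The hard part is conceptually minor but worth flagging: one must ensure $\dot x(t)\in T_{x(t)}\cM$ for a.e.\ $t$ so that the projection characterization applies in the reverse direction, and confirm the chain rule for $\nabla\phi$ along the trajectory. Both are routine given $\phi\in\cC^2$ and the fact that trajectories remain inside $\cM\cap C$ (Proposition~\ref{prop:well_pose}); no additional regularity on $f$ beyond path-differentiability is needed, because both formulations only manipulate abstract subgradient elements $d\in\partial f(x)$ rather than time-derivatives of $f(x(t))$. The set-valued character of the inclusions poses no difficulty either, since the argument processes a single measurable selection $d(t)$ on each side and recovers the matching selection on the other side algebraically.
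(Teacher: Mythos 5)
Your proposal is correct and follows essentially the same route as the paper's proof: both directions hinge on the chain rule $\frac{d}{dt}\nabla\phi(x(t))=\nabla^2\phi(x(t))\dot x(t)$ together with the KKT characterization of the metric projection $P_{T_x\cM}$ as the solution of the strongly convex problem \eqref{eq:equiv-prob}. If anything, you are slightly more careful than the paper in the reverse direction, where you explicitly verify the feasibility $\dot x(t)\in T_{x(t)}\cM$ (by differentiating $c(x(t))=0$) before invoking uniqueness of the projection --- a step the paper's proof leaves implicit.
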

\begin{proof}
Consider any $x(t)$ satisfying \eqref{eq:DI_manifold}, there exist $d(x)\in\partial f(x)$, such that $\dot x=-{\rm P}_{T_x\cM}\nabla^2\phi(x)^{-1}d(x)$. By definition of ${\rm P}_{T_x\cM}$, $\dot x$ is the solution of the following convex optimization problem: 
\begin{equation}
\begin{aligned}
\min\;&\frac{1}{2}\norm{u+\nabla^2\phi(x)^{-1}d(x)}_x^2\\
\text{s.t. }&u\in T_x\cM.
\end{aligned}
\label{eq:equiv-prob}
\end{equation}
By optimality condition, we have 
\[
0\in\nabla^2\phi(x)(\dot x+\nabla^2\phi(x)^{-1}d(x))+N_x\cM.
\]
Thus $\frac{d}{dt}\nabla\phi(x)\in -d(x)+N_x\cM$, and satisfying \eqref{eq:DI_manifold_Breg}. 

Now, for any $x(t)$ satisfying \eqref{eq:DI_manifold_Breg}. There exists $d(x)\in\partial f(x)$, and $\nabla^2\phi(x)\dot x\in-d(x)+N_x\cM$ , which is equivalent to $\dot x$ is the solution of the optimization problem \eqref{eq:equiv-prob}. Since \eqref{eq:equiv-prob} is a strongly convex problem, it has a unique solution. By definition of ${\rm P}_{T_x\cM}$, we have $\dot x\in-{\rm P}_{T_x\cM}\nabla^2\phi(x)^{-1}d(x)$. Thus, $x(t)$ satisfies \eqref{eq:DI_manifold}. This completes the proof.
\end{proof}
Now, we can explicitly see how \eqref{eq:DI_manifold_Breg} leads to mirror descent scheme. When $\cM=L$ is affine space, discretization of \eqref{eq:DI_manifold_Breg} gives 
\[
\frac{\nabla\phi(x_{k+1})-\nabla\phi(x_k)}{\eta_k}\in-d_k-N_{x_k}\cM,
\]
where $d_k\in\partial f(x_k)$. This is the optimality condition for the convex problem:
\[
\begin{aligned}
    \min_{x\in\R^n}\;&\inner{d_k,x}+\frac{1}{\eta_k}\cD_\phi(x,x_k)\\
    \text{s.t. }&x_{k+1}\in x_k+T_{x_k}\cM=L.
\end{aligned}
\]
Since $\phi$ is strict convex, the problem has a unique solution. Thus, this naive discretization corresponds to the conventional mirror descent scheme when $ f $ is differentiable. As the consequence of Proposition \ref{prop:DI_equiv}, we have the following equivalent description of the stable set. The proof is similar to the proof of Proposition \ref{prop:equiv_stable}, thus omitted.
\begin{proposition}
\label{prop:stable_equiv}
$\{x:0\in H(x)^{-1}(\partial f(x)+N_{x}\cM)\}=\{x:0\in {\rm P}_{T_x\cM}H(x)^{-1}\partial f(x)\}$.
\end{proposition}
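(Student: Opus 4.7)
The plan is to transcribe the proof of Proposition \ref{prop:equiv_stable} from the affine-subspace setting to the manifold setting: replace $L_0=\{z:Az=0\}$ by the tangent space $T_x\cM$, replace $N_L={\rm range}(A^\top)$ by the Euclidean normal space $N_x\cM=(T_x\cM)^\perp$, and replace ${\rm P}_x$ by the Riemannian projection ${\rm P}_{T_x\cM}$ onto $T_x\cM$ under the local metric $\langle u,v\rangle_x=u^\top H(x)v$ with $H(x)=\nabla^2\phi(x)$. Two ingredients drive the argument: a KKT characterization of ${\rm P}_{T_x\cM}$, and a short orthogonality computation.

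First I would write the Riemannian projection as the unique minimizer
\[
{\rm P}_{T_x\cM}v \;=\; \argmin_{u\in T_x\cM}\frac{1}{2}\|u-v\|_x^2,
\]
and, since $T_x\cM$ is a linear subspace of $\R^n$, record its KKT condition in the form $H(x)\bigl({\rm P}_{T_x\cM}v-v\bigr)\in N_x\cM$. Next I would prove the key lemma: for every $n\in N_x\cM$ and every $u\in T_x\cM$,
\[
\langle H(x)^{-1}n,\,u\rangle_x \;=\; (H(x)^{-1}n)^\top H(x)\,u \;=\; n^\top u \;=\; 0,
\]
so $H(x)^{-1}n$ is Riemannian-orthogonal to $T_x\cM$, yielding ${\rm P}_{T_x\cM}H(x)^{-1}n=0$.

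For the inclusion $\{0\in {\rm P}_{T_x\cM}H(x)^{-1}\partial f(x)\}\subseteq\{0\in H(x)^{-1}(\partial f(x)+N_x\cM)\}$, I would pick $d(x)\in\partial f(x)$ with $0={\rm P}_{T_x\cM}H(x)^{-1}d(x)$ and apply the KKT condition with $v=H(x)^{-1}d(x)$ and minimizer $0$, obtaining $-d(x)\in N_x\cM$ and hence $d(x)\in N_x\cM$ (subspace). Writing $0=d(x)+(-d(x))$ as a sum of an element of $\partial f(x)$ and an element of $N_x\cM$ gives $0\in H(x)^{-1}(\partial f(x)+N_x\cM)$. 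For the reverse inclusion, take $d(x)\in\partial f(x)$ and $n\in N_x\cM$ with $H(x)^{-1}(d(x)+n)=0$, apply ${\rm P}_{T_x\cM}$ to both sides, and eliminate the normal term via the orthogonality lemma to conclude $0={\rm P}_{T_x\cM}H(x)^{-1}d(x)\in {\rm P}_{T_x\cM}H(x)^{-1}\partial f(x)$.

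No serious obstacle is expected; the only step deserving any attention is confirming that the Lagrange multipliers for projection onto the linear subspace $T_x\cM$ lie in the Euclidean normal space $N_x\cM$ rather than in some metric-dependent normal space, so that the statement of the proposition is obtained verbatim. The orthogonality identity above is precisely what makes this transparent and what allows the affine-case proof to carry over without modification.
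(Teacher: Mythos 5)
Your proposal is correct and is exactly what the paper intends: the paper omits the proof of Proposition~\ref{prop:stable_equiv}, stating only that it is similar to that of Proposition~\ref{prop:equiv_stable}, and your argument is the faithful transcription of that affine-case proof to the manifold setting, with the explicit formula for ${\rm P}_x$ replaced by the coordinate-free orthogonality identity $\langle H(x)^{-1}n,u\rangle_x=n^\top u=0$ for $n\in N_x\cM$, $u\in T_x\cM$. Your remark that the Lagrange multiplier for projection onto the linear subspace $T_x\cM$ lands in the Euclidean normal space $N_x\cM=(T_x\cM)^\perp$ (not a metric-dependent one) is precisely the point that makes the statement come out verbatim, and it is handled correctly.
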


The stable set is a core concept in the dynamical system approach to stochastic approximation, typically linked to the stationary points of an optimization problem. For the differential inclusion \eqref{eq:DI_manifold}, the stable set is defined as:
\begin{equation}
\cS=\{x:0\in{\rm P}_{T_x\cM}\nabla^2\phi(x)^{-1}\partial f(x)\}.
\label{def:stable_set}
\end{equation}
In comparison, the stationary point set of the optimization problem is given by:
\begin{equation}
\Omega=\{x:0\in\partial f(x)+N_{\cM}(x)+N_{\overline C}(x)\}.
\label{def:stationary_set}
\end{equation}
Interestingly, the stable set of the continuous dynamical system (1) can be strictly larger than the stationary point set of the optimization problem. This observation ties into the notion of spurious stationary points in mirror descent schemes, a phenomenon recently highlighted in the optimization community \cite{chen2024spurious,zhang2024stochastic}. To illustrate this, we present two examples:
\begin{example}
\label{example:stable_set}
\begin{enumerate}
    \item For $C=\{x:x\geq0\}$, $\cM=\R^n$, $\phi(x)=\sum_{i=1}^nx_i(\log x_i-1)$. The stable set of the associated differential inclusion is:
    \[
    \cS=\qty{x\in\R^n: 0\in x\circ\partial f(x)}.
    \]
    Here, $ x = 0 $ is always in $ \mathcal{S} $, even if it is not a stationary point of the optimization problem.
    \item Consider $C=\{x:\norm{x}\leq1\}$, $\cM=\R^n$, $\phi(x)=-\sqrt{1-\norm{x}^2}$. Simple calculation shows $\nabla^2 \phi(x)^{-1} = \sqrt{1 - \|x\|^2} (I - x x^T).$
    Thus, the stable set is:
    \[
    \cS=\qty{x\in\R^n: 0\in \sqrt{1 - \|x\|^2} (I - x x^T)\partial f(x)}.
    \]
    In this case, any $ x $ on the boundary of $ C $ is in the stable set.
\end{enumerate}
\end{example}

In the following, we demonstrate that under specific conditions on the kernel function, which are satisfied by many widely used barrier functions, the stationary points of an optimization problem are always contained within the stable set of the associated differential inclusion. Additionally, we provide a clearer characterization of the stable set.
\begin{proposition}
\label{prop:relation_NC_Null}
Suppose that for any $\rho>0$, $\sup_{x\in C\cap\rho\mathbb{B}}\left\{\norm{\nabla^2\phi(x)^{-1}\nabla\phi(x)}\right\}<\infty.$ Then, it holds that 
\[
N_{\overline{C}}(x)\subset{\rm Null}((\nabla^2\phi(x))^{-1}).
\]
In addition, if ${\rm span}(\partial^\infty\phi(x))\supset{\rm Null}((\nabla^2\phi(x))^{-1})$, the stable set of differential inclusion can be characterized as 
\[
\cS=\{x:0\in\nabla f(x)+N_\cM(x)+{\rm span}(N_{\overline{C}}(x))\}.
\]
\end{proposition}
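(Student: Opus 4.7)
The plan is to establish the two claims in order, leaning on the standing continuity extension of $\nabla^2\phi(x)^{-1}$ to $\cM\cap\overline C$ and on the convex-analytic identification of $N_{\overline C}$ with the horizon subdifferential $\partial^\infty\phi$.

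For the first claim, the inclusion $N_{\overline C}(\bar x)\subset{\rm Null}(\nabla^2\phi(\bar x)^{-1})$ is trivial when $\bar x\in C$, since there $N_{\overline C}(\bar x)=\{0\}$. For $\bar x\in\partial C$, I would use the standard fact that for a proper lower semicontinuous convex function $\phi$ with $\dom\phi=\overline C$, one has $\partial^\infty\phi(\bar x)=N_{\overline C}(\bar x)$ (Rockafellar--Wets, Proposition 8.12). By the definition of the horizon subdifferential, any $v\in N_{\overline C}(\bar x)$ can be represented as
\[
v=\lim_{k\to\infty} t_k\nabla\phi(x_k),\qquad x_k\to\bar x,\quad x_k\in C,\quad t_k\downarrow 0.
\]
Applying the continuous extension of $\nabla^2\phi(\cdot)^{-1}$ to the closure,
\[
\nabla^2\phi(\bar x)^{-1}v=\lim_{k\to\infty} t_k\bigl[\nabla^2\phi(x_k)^{-1}\nabla\phi(x_k)\bigr]=0,
\]
where the last equality uses that $\nabla^2\phi(x_k)^{-1}\nabla\phi(x_k)$ is uniformly bounded on the bounded sequence $\{x_k\}$ by hypothesis, while $t_k\to 0$.

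For the stable-set characterization, the first claim combined with the added hypothesis ${\rm span}(\partial^\infty\phi(\bar x))\supset{\rm Null}(\nabla^2\phi(\bar x)^{-1})$ and the identity $\partial^\infty\phi(\bar x)=N_{\overline C}(\bar x)$ upgrades the inclusion to the equality
\[
{\rm Null}(\nabla^2\phi(\bar x)^{-1})={\rm span}\bigl(N_{\overline C}(\bar x)\bigr).
\]
I would then invoke Proposition \ref{prop:stable_equiv} to rewrite the condition $x\in\cS$ as $0\in\nabla^2\phi(x)^{-1}\bigl(\partial f(x)+N_{x}\cM\bigr)$, which is equivalent to demanding that $\bigl(\partial f(x)+N_{x}\cM\bigr)\cap{\rm Null}(\nabla^2\phi(x)^{-1})$ be nonempty. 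Substituting the displayed equality produces $0\in\partial f(x)+N_\cM(x)+{\rm span}(N_{\overline C}(x))$, proving both directions (using that ${\rm span}(N_{\overline C}(x))$ is a linear subspace, so sign flips come for free).

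The main obstacle I anticipate is the justification of $N_{\overline C}(\bar x)=\partial^\infty\phi(\bar x)$ for a Legendre kernel: because $\phi$ is essentially smooth, $\partial\phi(\bar x)$ is typically empty for $\bar x\in\partial C$, so one must rely on realizing the horizon subdifferential through blow-up of interior gradients $\nabla\phi(x_k)$ as $x_k\to\bar x$. This is precisely where the growth hypothesis on $\nabla^2\phi(x)^{-1}\nabla\phi(x)$ does the work, converting the diverging gradient $\nabla\phi(x_k)$ into a controllable limit after multiplication by $\nabla^2\phi(x_k)^{-1}$. Once that identification is in hand, the rest is a short convex-algebraic manipulation via Proposition \ref{prop:stable_equiv}.
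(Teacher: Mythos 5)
Your proposal is correct and follows essentially the same route as the paper: both realize normal-cone vectors as limits $t_k\nabla\phi(x_k)$ of vanishing multiples of interior gradients (via the identification $\partial^\infty\phi=N_{\overline C}$), use the boundedness of $\nabla^2\phi(x_k)^{-1}\nabla\phi(x_k)$ to show the continuously extended inverse Hessian annihilates them, and then upgrade to the null-space equality under the extra spanning hypothesis. If anything, your write-up is more complete, since you spell out the final passage from that equality to the stable-set characterization via Proposition \ref{prop:stable_equiv}, a step the paper's proof leaves implicit.
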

\begin{proof}
Define $S:=\left\{d:\nabla^2\phi(x_k)e_k\rightarrow d,e_k\rightarrow0,x_k\rightarrow\bar x\right\}$. For any $v\in{\rm span}(\partial^\infty\phi(x))$, there exists $\lambda_k\rightarrow0$, $x_k\rightarrow\bar x$, and $\lambda_k\nabla\phi(x_k)=:v_k\rightarrow v$. Let $e_k:=\lambda_k(\nabla^2\phi(x_k))^{-1}\nabla\phi(x_k)$, by assumption, $e_k\rightarrow0$. Thus, $\nabla^2\phi(x_k)e_k=\lambda_k\nabla\phi(x_k)\rightarrow v\in S$. Note that $\phi$ is Legendre of $\overline{C}$, thus ${\rm span}(\partial^\infty\phi(x))=N_{\overline{C}}(x)$. Furthermore, if ${\rm span}(\partial^\infty\phi(x))\supset{\rm Null}((\nabla^2\phi(x))^{-1})$, we have ${\rm span}(\partial^\infty\phi(x))={\rm Null}((\nabla^2\phi(x))^{-1})$. This completes the proof.
\end{proof}
\begin{example}
The two conditions in the proposition are satisfied by several commonly used kernel functions, including:
    \begin{enumerate}
        \item The kernel function $\phi(x)=\sum_{i=1}^n(x_i(\log x_i-1))$, or $\phi(x)=-\sum_{i=1}^n\log x_i$, or $\phi(x)=-\sum_{i=1}^n(x_i)^{2-p}$ where $(p\in(1,2))$, for nonnegative orthant $C=\{x\in\R^n:x\geq0\}$.
        \item The kernel function $\phi(X)=-\log(\det(X))$ for positive semidefinite (PSD) cone.
    \end{enumerate}
\end{example}

By Proposition \ref{prop:relation_NC_Null} and Example \ref{example:stable_set}, we have the following:
\begin{proposition}
Suppose conditions in Proposition \ref{prop:relation_NC_Null} holds. Then, $\Omega\subset\cS$. Moreover, there exists a problem \eqref{prob:PCP} and a kernel $\phi$, such that the inclusion holds strictly.   
\end{proposition}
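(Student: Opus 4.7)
The plan is to split the two claims and handle them in order using the characterization of $\cS$ supplied by Proposition~\ref{prop:relation_NC_Null}.

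For the inclusion $\Omega\subset\cS$, I would simply compare the two set descriptions side by side. By Proposition~\ref{prop:relation_NC_Null}, under its hypotheses
\[
\cS=\{\,x\in\cM\cap\overline C:\;0\in\partial f(x)+N_\cM(x)+{\rm span}(N_{\overline C}(x))\,\},
\]
while by definition
\[
\Omega=\{\,x:\;0\in\partial f(x)+N_\cM(x)+N_{\overline C}(x)\,\}.
\]
Since $N_{\overline C}(x)$ is a convex cone contained in its linear span, any decomposition $0=g+n_{\cM}+n_C$ with $n_C\in N_{\overline C}(x)$ also witnesses $0\in\partial f(x)+N_\cM(x)+{\rm span}(N_{\overline C}(x))$. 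This gives $\Omega\subset\cS$ immediately; no further work is needed once the characterization is in hand.

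For the strict inclusion, I would exhibit a concrete instance drawn from Example~\ref{example:stable_set}(1). Take $\cM=\R^n$, $C=\R^n_{++}$, the entropy kernel $\phi(x)=\sum_{i=1}^n x_i(\log x_i-1)$ (which satisfies the conditions of Proposition~\ref{prop:relation_NC_Null}), and the smooth objective $f(x)=-x_1$. Then Example~\ref{example:stable_set}(1) gives $\cS=\{x:0\in x\circ\partial f(x)\}$, so the origin $\bar x=0$ automatically lies in $\cS$ because $\bar x\circ\partial f(\bar x)=\{0\}$. On the other hand, at $\bar x=0$ we have $\partial f(0)=\{-e_1\}$, $N_\cM(0)=\{0\}$ and $N_{\overline C}(0)=-\R^n_+$, so checking $0\in\Omega$ would require $e_1\in -N_{\overline C}(0)=\R^n_+$ offset by $-e_1$, i.e.\ $-e_1\in\R^n_+$, which fails. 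Hence $0\in\cS\setminus\Omega$ and the inclusion is strict.

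The main obstacle here is conceptual rather than technical: one has to be careful that the characterization in Proposition~\ref{prop:relation_NC_Null} uses the full linear span of $N_{\overline C}(x)$ whereas $\Omega$ only allows the one-sided cone $N_{\overline C}(x)$, and it is precisely this gap (negative multipliers being admissible for $\cS$ but not for $\Omega$) that produces the spurious stationary points discussed throughout the paper. The entropy example above makes this gap explicit by forcing the ``multiplier'' $x\mapsto x_i$ to vanish at the boundary regardless of the sign of the subgradient component, so $0$ trivially belongs to $\cS$ while clearly not being a KKT point of $\min\{-x_1:x\ge 0\}$.
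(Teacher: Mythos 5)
Your proposal is correct and follows essentially the same route as the paper, which derives this proposition directly from Proposition~\ref{prop:relation_NC_Null} (the inclusion $N_{\overline C}(x)\subset{\rm span}(N_{\overline C}(x))$ gives $\Omega\subset\cS$) together with Example~\ref{example:stable_set}(1) (the entropy kernel on $\R^n_{+}$, where $x=0$ lies in $\cS$ without being stationary). Your only addition is to make the strictness fully explicit by fixing $f(x)=-x_1$ and verifying $0\in\cS\setminus\Omega$, which is a faithful instantiation of what the paper leaves implicit.
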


\begin{remark}
Classical dynamical systems theory shows that the $ \omega $-limit set of any trajectory of the differential inclusion is contained in $ \mathcal{S} $. As a result, spurious stationary points may emerge in the trajectories. In subsequent sections, we will illustrate that certain spurious stationary points observed in mirror descent schemes correspond exactly to points in $ \mathcal{S} \setminus \Omega $.
\end{remark}

\subsubsection{Connection with spurious stationary point of mirror descent scheme in \cite{chen2024spurious}}
In this part, we examine the relationship between the stable set of the differential inclusion defined in equation \eqref{eq:DI_manifold_Breg} and the spurious stationary points of the mirror descent scheme, as explored in Chen et al. (2024). Specifically, when $\cM=\{x:Ax=b\}$ and $C=\R^n_+$ (special box constraint), \cite{chen2024spurious} demonstrate that the mirror descent scheme can exhibit spurious stationary points. These points are fixed points of an extended Bregman gradient mapping but do not correspond to the true stationary points of the original optimization problem. We show that these fixed points is exactly the stable set of the differential inclusion \eqref{eq:DI_manifold}.

We begin by introducing the extended update mapping for the mirror-descent scheme from \cite{chen2024spurious}, generalize it to the nonsmooth setting, and restate it in our notation. The kernel function employed here is separable Legendre function over $\R^n_{+}$, i.e. 
\begin{assumption}
\label{assumption:separable_phi}
The kernel function is separable $\phi(x)=\sum_{i=1}^d\phi_i(x_i):C\rightarrow\R$ satisfying the following conditions:
\begin{enumerate}
    \item Each $\phi_i$ is a Legendre function over $\R_+$, i.e. $\phi_i$ is differentiable strict convex over $\R_+$, and $\lim_{x_i\rightarrow0_+}\phi'(x_i)=+\infty$.
    \item There exist $\gamma>0$ and continuous $\psi_i:\R\rightarrow\R_{++}$, such that $\phi''_i(x_i)=x_i^{-\gamma}\psi_i(x_i)$.
\end{enumerate}
\end{assumption}
Examples satisfying Assumption \ref{assumption:separable_phi} include 
\[
\phi_i(t)=\left\{
\begin{array}{cc}
    t\log t, & p=1 \\
    \frac{1}{(2-p)(1-p)}t^{2-p}, &\;\; p\in(1,2) \\
    -\log t, & p=2.
\end{array}\right.
\]

To address points on the boundary of the feasible set, \cite{chen2024spurious} defines an extended update mapping for the mirror descent scheme.

\begin{definition} 
Define the extended mapping \( \overline{T}_\eta(x) \) as 
\[
\overline{T}_\eta(x)\in\argmin\qty{\inner{\partial f(x),y} + \frac{1}{\eta} \sum_{i \in \mathcal{I}(x)} D_\phi(y_i, x_i) + \delta_{{y}_{\mathcal{J}(x)} = x_{\mathcal{J}(x)}}(y):\;\text{s.t. } Ax=b,\;x\geq0},
\]
where $\cI(x)=\{i:x_i>0\}$ and $\cJ=\{j:x_j=0\}$. A point \(x \in \mathcal{X}\) is a spurious stationary point if and only if
\[
x\in\overline{T}_\eta(x) \, \text{ but } \, 0 \notin \partial f(x)+N_x\cM+N_{\overline{C}}(x).
\]
\end{definition}
We now prove that the fixed points of $ \overline{T}_\eta $ are equivalent to the stable set $ \mathcal{S} $ of the differential inclusion \eqref{eq:DI_manifold}. Consequently, the spurious stationary points of the mirror descent scheme are precisely those points in $ \mathcal{S} $ that are not true stationary points of the original problem.
\begin{proposition}
The fixed point set of $\overline{T}_\eta$ is equivalent to the stable set $\cS$ of the corresponding stable set for differential inclusion \eqref{eq:DI_manifold}.
\end{proposition}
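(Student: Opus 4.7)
The plan is to reduce both sides to the same pair of first–order conditions and then exploit the separable structure of $\phi$ in Assumption \ref{assumption:separable_phi} to match them coordinate by coordinate. First I would write out the optimality conditions that characterize $x \in \overline{T}_\eta(x)$. Since $y_{\cJ(x)}$ is pinned at $x_{\cJ(x)} = 0$ by the indicator $\delta_{y_{\cJ(x)}=x_{\cJ(x)}}$, the subproblem only moves the coordinates indexed by $\cI(x)$ subject to $Ay=b$. The KKT conditions at $y=x$ then state that there exist $d \in \partial f(x)$ and a multiplier $\mu \in \R^m$ for the affine constraint with $d_i + \frac{1}{\eta}\bigl(\nabla\phi_i(y_i)-\nabla\phi_i(x_i)\bigr)\big|_{y=x} - (A^T\mu)_i = 0$ for every $i \in \cI(x)$, which collapses to $d_i = (A^T\mu)_i$ for all $i \in \cI(x)$.

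Next I would use Proposition \ref{prop:stable_equiv} (together with the boundary-extension hypothesis from the Remark) to rewrite $\cS$ as $\{x \in L\cap\overline C : 0 \in H(x)^{-1}(\partial f(x) + N_x\cM)\}$. Because $\cM = \{x:Ax=b\}$ is affine, $N_x\cM = \mathrm{range}(A^T)$, so $x\in\cS$ iff there exist $d\in\partial f(x)$ and $\mu\in\R^m$ with $H(x)^{-1}(d - A^T\mu)=0$. Under Assumption \ref{assumption:separable_phi}, $H(x)$ is diagonal with entries $\phi_i''(x_i) = x_i^{-\gamma}\psi_i(x_i)$, so the continuous extension of $H(x)^{-1}$ to the closure of $C$ is the diagonal matrix $\mathrm{diag}\bigl(x_i^{\gamma}/\psi_i(x_i)\bigr)$, whose $i$-th entry is strictly positive for $i\in\cI(x)$ and equal to $0$ for $i\in\cJ(x)$. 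Therefore $H(x)^{-1}(d-A^T\mu)=0$ is equivalent to $d_i - (A^T\mu)_i = 0$ for all $i \in \cI(x)$, which is exactly the fixed-point condition derived above.

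Chaining these two characterizations gives $x = \overline{T}_\eta(x) \iff x \in \cS$, establishing the claim. The main obstacle I anticipate is being rigorous about the boundary behavior: at points with $\cJ(x) \neq \emptyset$, the operator $H(x)^{-1}$ is singular in the degenerate directions, and the stable-set formula only makes sense through the continuous extension assumed in the Remark preceding Section 3. The separable structure in Assumption \ref{assumption:separable_phi}, with exponent $\gamma>0$, is precisely what makes this extension collapse the $\cJ(x)$-coordinates of $H(x)^{-1}$ to zero, so that the no-condition-on-$\cJ(x)$ nature of the fixed-point problem (where those coordinates are frozen by the indicator) lines up exactly with the vanishing row/column pattern of $H(x)^{-1}$. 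Once this alignment is made explicit, the remainder of the argument is a direct translation of KKT conditions.
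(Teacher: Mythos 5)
Your proof is correct and follows essentially the same route as the paper's: both sides are reduced to first-order optimality conditions that constrain only the coordinates in $\cI(x)$ (namely $d_i=(A^T\mu)_i$ for $i\in\cI(x)$, with no condition on $\cJ(x)$), and the two characterizations are then matched. The only cosmetic difference is that the paper rewrites the fixed-point condition as $0\in\partial f(x)+N_x\cM+\mathrm{span}\bigl(N_{\overline C}(x)\bigr)$ and invokes Proposition \ref{prop:relation_NC_Null}, whereas you obtain the same alignment by computing directly that the continuous extension of $H(x)^{-1}$ under Assumption \ref{assumption:separable_phi} is diagonal with entries $x_i^{\gamma}/\psi_i(x_i)$ vanishing exactly on $\cJ(x)$; both steps are valid.
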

\begin{proof}
Note that the normal cone  $N_{\overline{C}\cap\cM}(x)=\qty{A^T\mu-\lambda:\mu\in\R^m,\lambda_{\cI(x)}=0,\lambda_{\cJ(x)}\geq0}$. Moreover, 
\[
\partial\delta_{{y}_{\mathcal{J}(x)} = x_{\mathcal{J}(x)}}(y)\bigg|_{y=x}=\qty{\xi\in\R^n:\xi_{\cI(x)}=0}.
\]
$x\in\overline{T}_\eta(x)$ is equivalent to the existence of $d(x)\in\partial f(x)$ such that 
\[
\begin{aligned}
0=&\;d(x)+0+\partial\delta_{{y}_{\mathcal{J}(x)} = x_{\mathcal{J}(x)}}(y)\bigg|_{y=x}+N_{\overline{C}\cap\cM}(x)\\
=&\;d(x)+\{A^T\mu:\mu\in\R^m\}+\{\lambda\in\R^n:\lambda_{\cI(x)}=0\}\\
\in&\;\partial f(x)+N_x\cM+{\rm span}\qty(N_{\overline{C}}(x)).
\end{aligned}
\]
By Proposition \ref{prop:relation_NC_Null}, this condition is equivalent to the stable set of \eqref{eq:DI_manifold}. 
\end{proof}

We now make some remarks on mirror descent for the unconstrained optimization problem, i.e. $C=\cM=\R^n$. In this case, the "infinity point" (i.e., points where \(\|x\| \to \infty)\) can be regarded as the boundary of \( C \). If \(\phi\) is a Legendre function over \(\mathbb{R}^n\), the infinity point may lie in the stable set of the differential inclusion (1), which aligns with the counterexample constructed in \cite{zhang2024stochastic}. Specifically, their counterexample shows that the commonly used Bregman stationary measure approaches zero as \( x \) tends to infinity.

For example, consider the case where $C=\cM=\R^n$ and kernel function $\phi(x)=\frac{1}{2}\norm{x}^2+\frac{1}{4}\norm{x}^4$.
The stable set is 
\[
\begin{aligned}
\cS=&\qty{x\in\R^n:0\in\nabla^2\phi(x)^{-1}\partial f(x)}\\
=&\qty{x\in\R^n:0\in\qty(\frac{1}{1 + \|x\|^2} I - \frac{2}{(1 + \|x\|^2)(1 + 3 \|x\|^2)} x x^T)\partial f(x)}.
\end{aligned}
\]
Suppose $\liminf_{\norm{x}\to\infty}\dist\qty(0,\partial f(x))<\infty$, then we have
\[
0\in\limsup_{x:\norm{x}\to\infty}\qty{\qty(\frac{1}{1 + \|x\|^2} I - \frac{2}{(1 + \|x\|^2)(1 + 3 \|x\|^2)} x x^T)\partial f(x)}
\]
Thus, the infinity point can be considered a stable point in the stable set \(\mathcal{S}\), illustrating how mirror descent may converge to non-stationary points even for unconstrained problem.

\subsubsection{Connection with reparameterization}
As shown in \cite{li2022implicit}, the mirror descent flow is closely related to the gradient flow of a reparameterized model. Consider the optimization problem where $ \mathcal{M} = \mathbb{R}^n $:
\[
\min\;f(x),\quad\text{ s.t. } x\in\overline{C},
\]
with the mirror descent flow defined as:
\[
\dot x\in-\nabla^2\phi(x)^{-1}\partial f(x).
\]
Suppose the constraint set $ \overline{C} $ can be reparameterized as $ \{ G(y) \in \mathbb{R}^n : y \in \mathbb{R}^m \} $, where $ G $ is a differentiable function. By setting $ x = G(y) $, the optimization problem becomes:
\[
\min_{y\in\R^m}\;f(G(y)).
\]
Assuming the chain rule applies to $ f \circ G $, the subgradient flow for the reparameterized problem is:
\[
\dot y\in-\nabla G(y)\partial f(G(y)).
\]
Since $ \dot{x} = \nabla G(y) \dot{y} $, we derive:
\[
\dot x\in\nabla G(y)^T\dot y\subset-\nabla G(y)^T\nabla G(y)\partial f(G(y))=-\nabla G(y)^T\dot y\subset-\nabla G(y)^T\nabla G(y)\partial f(x).
\] 
Thus, if the condition:
\begin{equation}
\nabla^2\phi(x)^{-1}=\nabla G(y)^T\nabla G(y), \text{ with $x=G(y)$,}
\label{eq:equiv_repara_mirror}
\end{equation}
holds, the mirror descent flow is equivalent to the subgradient flow of the reparameterized model.

\begin{example}[Squared Slack Variable]
Consider $ \phi(x) = \sum_{i=1}^n x_i (\log x_i - 1) $ and the constraint set $ C = \{ x \in \mathbb{R}^n : x \geq 0 \} $. The reparameterization $ x = \frac{1}{2} y \circ y $ (where $ \circ $ denotes element-wise multiplication) satisfies the above condition. Known as the squared slack variable, this approach has recently gained attention for addressing inequality-constrained problems, as explored in \cite{kolb2023smoothing,ding2023squared,tang2024optimization}.  
\end{example}

Recent research, such as \cite{levin2025effect,tang2024optimization}, has investigated reparameterized methods in Riemannian optimization, uncovering relationships between the reparameterized and original problems. Under certain regularity conditions, a second-order stationary point $ y $ of the reparameterized problem implies a first-order stationary point $ x = G(y) $ of the original problem (denoted $ 2 \Rightarrow 1 $). However, a first-order stationary point of the reparameterized problem does not necessarily imply a first-order stationary point of the original problem (denoted $ 1 \not\Rightarrow 1 $). When the condition $ \nabla^2 \phi(x)^{-1} = \nabla G(y) \nabla G(y)^T $ holds, the mirror descent flow aligns with the gradient flow of the reparameterized problem. In this case, the $ 1 \not\Rightarrow 1 $ phenomenon can be attributed to spurious stationary points in the mirror descent flow.

\begin{example}
Consider the constrained optimization problem:
$$\min_{x \in \mathbb{R}^n} f(x), \quad \text{s.t.} \quad x \geq 0,$$ 
and its reparameterized form with $ x = y \circ y $: 
$$\min_{y \in \mathbb{R}^n} f(y \circ y).$$
The first-order stationary points of the reparameterized problem are: $\{ y : 0 \in y \circ \partial f(y \circ y) \},$ which, since $ x = y \circ y \geq 0 $, is equivalent to: $0 \in \sqrt{x} \circ \partial f(x).$ For the kernel function $ \phi(x) = \sum_{i=1}^n (x_i \log x_i - x_i) $, the stable set of the mirror descent flow is: $\{ x : 0 \in \nabla^2 \phi(x)^{-1} \partial f(x) \}$, which is exactly the same as the first-order stationary points of the reparameterized problem. The $ 1 \not\Rightarrow 1 $ phenomenon arises due to spurious stationary points in the mirror descent flow.
\end{example}


\subsection{Asymptotic convergence of the trajectory}
In this section, we explore the asymptotic convergence properties of the trajectory $ x(t) $, with a focus on its long-term behavior near points in $ \mathcal{S} $ that are not true stationary points of the optimization problem (spurious stationary points). We first demonstrate that any limit point of $ x(t) $ lies in the stable set $ \mathcal{S} $ of the differential inclusion \eqref{eq:DI_manifold}. Subsequently, we analyze the trajectory's behavior near the spurious points.

We begin by establishing that the trajectory $ x(t) $ exhibits subsequential convergence to the stable set $ \mathcal{S} $. Unlike prior works such as \cite{duchi2018stochastic,davis2020stochastic,bolte2021conservative}, our approach directly study the continuous trajectory.
\begin{lemma}
Suppose $\phi$ is a $\cC^2({C})$ Legendre function, and $f$ is path-differentiable. Then, any limit point of $x(\cdot)$ lies in the stable set $\cS$.     
\end{lemma}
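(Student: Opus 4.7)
The strategy is to exhibit $f$ as a strict Lyapunov function for the flow \eqref{eq:DI_manifold} relative to the stable set $\cS$, and then invoke a LaSalle-type contradiction to confine any $\omega$-limit point to $\cS$. The key descent identity I would establish is
\[
\frac{d}{dt}f(x(t)) \;=\; -\,\|\dot x(t)\|_{x(t)}^2 \quad\text{for a.e. } t\ge 0.
\]
To derive it, use measurable selection to write $\dot x(t) = -{\rm P}_{T_{x(t)}\cM}\nabla^2\phi(x(t))^{-1}d(t)$ for some $d(t)\in\partial f(x(t))$. The equivalent primal--dual characterization of Proposition \ref{prop:DI_equiv} immediately yields $\nabla^2\phi(x(t))\dot x(t) + d(t)\in N_{x(t)}\cM$, and pairing against $\dot x(t)\in T_{x(t)}\cM$ gives $\langle d(t),\dot x(t)\rangle = -\|\dot x(t)\|_{x(t)}^2$. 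Path-differentiability of $f$ then supplies the chain rule $\frac{d}{dt}f(x(t)) = \langle d(t),\dot x(t)\rangle$ almost everywhere, and the identity follows.

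Integrating and using convergence of $f(x(t))$ from Proposition \ref{prop:well_pose}, I obtain the finite-energy estimate $\int_0^\infty \|\dot x(s)\|_{x(s)}^2\,ds < \infty$. Suppose for contradiction that $\bar x$ is a limit point of $x(\cdot)$ with $\bar x\notin \cS$, so that $c := \dist\bigl(0,\, {\rm P}_{T_{\bar x}\cM}\nabla^2\phi(\bar x)^{-1}\partial f(\bar x)\bigr) > 0$. Since $\partial f$ is outer semicontinuous and locally bounded, while both $\nabla^2\phi^{-1}$ and the Riemannian projection ${\rm P}_{T_\cdot\cM}$ are continuous on $\cM\cap C$, the composite set-valued map $x\mapsto {\rm P}_{T_x\cM}\nabla^2\phi(x)^{-1}\partial f(x)$ is upper semicontinuous with compact values. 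Hence there is a neighborhood $U\ni\bar x$ on which every selection satisfies $\|\dot x(s)\|_{x(s)}\ge c/2$ whenever $x(s)\in U$. Since $\|\dot x\|$ is uniformly bounded on $U$, a dwell-time $\delta > 0$ exists so that, after thinning a sequence $t_k\to\infty$ with $x(t_k)\to\bar x$, the intervals $[t_k,t_k+\delta]$ are disjoint and entirely contained in $\{s : x(s)\in U\}$. Summing the resulting lower bounds gives $\sum_k \int_{t_k}^{t_k+\delta}\|\dot x(s)\|_{x(s)}^2\,ds = \infty$, contradicting the finite-energy estimate; therefore $\bar x\in\cS$.

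The main obstacle is the careful nonsmooth bookkeeping: the descent identity depends on the orthogonal decomposition from Proposition \ref{prop:DI_equiv} being valid for a \emph{measurable} subgradient selection along the trajectory (which requires appealing to path-differentiability rather than the classical chain rule), and the dwell-time argument requires upper semicontinuity of the composite field ${\rm P}_{T_x\cM}\nabla^2\phi(x)^{-1}\partial f(x)$ on $\cM\cap C$, which in turn uses local boundedness of $\partial f$, $\cC^2$ regularity of $\phi$, and smoothness of $\cM$. Once these two regularity ingredients are in hand, the LaSalle-type contradiction is routine.
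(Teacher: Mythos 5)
Your route is genuinely different from the paper's. The paper proves the statement by a shift-and-compactness (LaSalle) argument: it first shows $f$ is a strict Lyapunov function for the flow with respect to $\cS$, then considers the shifted trajectories $x^{t_j}(\cdot):=x(t_j+\cdot)$, extracts a uniform limit $\bar x(\cdot)$ which is itself a trajectory issuing from the limit point $x^*$, observes that $f$ is constant along $\bar x(\cdot)$ because $f(x(t))$ converges, and derives a contradiction with strict descent if $x^*\notin\cS$. You instead integrate the same descent identity to get the finite-energy bound $\int_0^\infty\norm{\dot x(s)}_{x(s)}^2\,ds<\infty$ and run a dwell-time argument near a putative limit point outside $\cS$. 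Your derivation of the descent identity via Proposition \ref{prop:DI_equiv} and path-differentiability is correct and matches the paper's first step; your approach avoids the Arzel\`a--Ascoli step and the (tacit) verification that uniform limits of trajectories are trajectories, which is a real simplification.

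There is, however, one concrete gap in the dwell-time step, and it bites precisely in the interesting case where the limit point $\bar x$ lies on $\partial C$. You define $c$ as the \emph{Euclidean} distance from $0$ to ${\rm P}_{T_{\bar x}\cM}\nabla^2\phi(\bar x)^{-1}\partial f(\bar x)$ and conclude via upper semicontinuity that $\norm{\dot x(s)}_{x(s)}\ge c/2$ on a neighborhood $U$ of $\bar x$ --- but upper semicontinuity only gives the Euclidean bound $\norm{\dot x(s)}\ge c/2$, whereas the finite-energy integral is in the local norm $\norm{\cdot}_{x(s)}^2=\inner{\nabla^2\phi(x(s))\,\cdot,\cdot}$. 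Passing from one to the other requires $\inf_{x\in U\cap C}\lambda_{\min}(\nabla^2\phi(x))>0$, which holds on compact subsets of $C$ but is not guaranteed on a neighborhood of a boundary point for a general $\cC^2(C)$ Legendre function (the Hessian blows up in normal directions but can degenerate in tangential ones). The fix is to bypass the Euclidean norm entirely: from $\nabla^2\phi(x)\dot x+d\in N_x\cM$ one gets
\[
\norm{\dot x(s)}_{x(s)}^2=\inner{d(s),\,{\rm P}_{T_{x(s)}\cM}\nabla^2\phi(x(s))^{-1}d(s)},
\]
and this quadratic form extends continuously to $\cM\cap\overline C$ under the paper's standing hypothesis that ${\rm P}_x\nabla^2\phi(x)^{-1}$ extends continuously to the closure; moreover it vanishes at $\bar x$ for some $d\in\partial f(\bar x)$ if and only if $\bar x\in\cS$. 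Lower-bounding $\min_{d\in\partial f(x)}\inner{d,{\rm P}_{T_x\cM}\nabla^2\phi(x)^{-1}d}$ on $U$ by a positive constant (using outer semicontinuity and local boundedness of $\partial f$) restores your dwell-time contradiction. With that repair, and with the observation that the uniform Euclidean speed bound needed for the dwell time $\delta$ also follows from the continuous extension of ${\rm P}_x\nabla^2\phi(x)^{-1}$, your argument is complete.
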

\begin{proof}
First, we note that for any trajectory $x(t)$, $f(x(t))$ is nonincreasing:
\[
\frac{d}{dt}f(x(t))=-\inner{\partial f(x(t)),\dot x(t)}=-\norm{\dot x(t)}^2_{\nabla^2\phi(x)}\leq0.
\]
For any trajectory $x(t)$ with $x(0)\notin\cS$, there exists a $T>0$ such that
\[
f(x(T))<f(x(0)).
\]
Otherwise, we have $\norm{\dot x(t)}^2_{\nabla^2\phi(x)}=0$ for almost all $t>0$. Thus, $x(t)\equiv x(0)$, and $\dot x=0\in{\rm P}_{x(0)}(H(x(0))^{-1}\partial f(x(0)))$. This is contradictory to that $x(0)\notin\cS$. Thus $f$ is a Lyapunov function of \eqref{eq:DI_manifold}.

Assume $x(t_j)\rightarrow x^*$. Let $x^{t_j}(\cdot):=x(t_j+\cdot)$. By uniformly Lipschitz continuity, $\{x^{t_j}(\cdot)\}$ is relatively compact over $\cC(\R^n_+)$. Suppose $x^{t_{j_k}}(\cdot)\rightarrow\bar x(\cdot)$ in $\cC(\R^n)$. Note that for any $T>0$:
\[
f(\bar x(T))=\lim_{k\rightarrow\infty}f(x^{t_{j_k}}(T))=\lim_{k\rightarrow\infty}f(x(t_{j_k}+T))=\lim_{t\rightarrow\infty}f(x(t))=f(x^*).
\]
On the other hand, $\bar x$ is also the trajectory of \eqref{eq:RGF2} with $\bar x(0)=x^*$. If $x^*\notin\cS$, since $f$ is a Lyapunov function, there exists $T>0$ such that
\[
f(\bar x(T))<f(\bar x(0))=f(x^*).
\]
This leads to a contradiction. Thus, $x^*\in\cS$. This complete the proof.
\end{proof}

We now investigate the trajectory's behavior near the spurious stationary points. We start with the case of linear constraints and later extend the analysis to nonlinear constraints. Assume the conditions of Proposition \ref{prop:relation_NC_Null} hold, the stable set is:
\[
\cS=\{x:0\in[\nabla^2\phi(x)]^{-1}(\partial f(x)+N_L)\}=\{x:0\in\partial f(x)+N_L+{\rm span}(N_{\overline C}(x))\}.
\]
Note that here we have already assume $\nabla^2\phi(x)^{-1}$ is extendable to $\cM\cap\overline{C}$.  Apparently, this stable set is larger than the stationary point of the optimization problem:
\[
\Omega=\{x:0\in\partial f(x)+N_L+N_{\overline C}(x)\}.
\]
By choices of $\phi$, $H(x)^{-1}={\rm Diag}(x_i^\gamma\psi_i(x_i)^{-1})_{i\in[d]}$, thus, the stable set is equivalent to 
\[
\cS=\{x:0\in(x\circ(\partial f(x)+N_L))\}=\{x:0\in {\rm P}_x(x\circ\partial f(x)\}.
\]
Here, ${\rm P}_x$ is the projection over $L$ under the metric induced by the Hessian of the entropy function $\sum_{i=1}^n(x_i\log x_i-x_i)$, whose Hessian inverse is ${\rm Diag}(x)$. Thus, in this case, we can fix the kernel function as $\sum_{i=1}^n(x_i\log x_i-x_i)$.

Define the index sets $ \mathcal{I}(x) = \{ i : x_i > 0 \} $ and $ \mathcal{J}(x) = \{ j : x_j = 0 \} $ (with the dependence on $ x $ omitted for brevity where clear). For any $ x \in \mathcal{S} $, there exists $ d(x) \in \partial f(x) $ such that:
\[
\left\{\begin{array}{cc}
    x_{\cI}>0, &d(x)_\cI=A^T_{\cI}y, \text{ for some }y\in\R^m,\\
    x_{\cJ}=0, & \text{ any }d(x)_{\cJ}.
\end{array}\right.
\]
For any $x\in\Omega$, there exists $d(x)\in\partial f(x)$, such that 
\[
\left\{\begin{array}{cc}
    x_{\cI}>0, &d(x)_{\cI}=A^T_{\cI}y, \text{ for some }y\in\R^m,  \\
    x_{\cJ}=0, & d(x)_{\cJ}\geq A^T_{\cJ}y, \text{ for some }y\in\R^m.
\end{array}\right.
\]
The following lemma provides an alternative characterization of $ \mathcal{S} \setminus \Omega $:
\begin{lemma}
\label{le:alternative}
Suppose Assumption \ref{assumption:separable_phi} holds, for any $x\in\cS\setminus\Omega$, we have
\[
\left\{\begin{array}{cc}
    x_{\cI}>0, &\partial_{\cI}f(x)\ni A^T_{\cI}y_{\cI}, \text{ for some }y_{\cI}\in\R^{|\cI|}  \\
    A_{\cJ}u=0,\;u\geq0 & \inner{u,\partial_{\cJ}f(x)}<0\text{ for some }u\in\R^{|\cJ|} .
\end{array}\right.
\]
\end{lemma}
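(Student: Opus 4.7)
The first row is a direct consequence of $x\in\cS$. Using the characterization $0\in x\circ(\partial f(x)+N_L)$ derived immediately before the lemma, there exist $d\in\partial f(x)$ and $y\in\R^{m}$ with $x_i(d_i-(A^Ty)_i)=0$ for all $i$. Since $x_i>0$ on $\cI$, this forces $d_\cI=A_\cI^Ty$; hence $A_\cI^Ty\in\partial_\cI f(x)$, and $x_\cI>0$ is immediate from the definition of $\cI$. This is exactly the first row.

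For the second row, my plan is to encode the failure of the KKT condition at $x$ via a Farkas/Motzkin alternative. Fix the $d$ and $y^*$ above. Because $x\notin\Omega$, the system $A_\cI^Ty=d_\cI,\ A_\cJ^Ty\le d_\cJ$ has no solution $y$; equivalently, setting $z=y-y^*$, the system $A_\cI^Tz=0,\ A_\cJ^Tz\le d_\cJ-A_\cJ^Ty^*$ is infeasible. Applying Motzkin's transposition theorem to this infeasibility yields a dual certificate $u\in\R_+^{|\cJ|}$ and $w\in\R^{|\cI|}$ with $A_\cI w+A_\cJ u=0$ and $(d_\cJ-A_\cJ^Ty^*)^Tu<0$. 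Once this is refined to $A_\cJ u=0$, the identity $(A_\cJ^Ty^*)^Tu=y^{*T}A_\cJ u=0$ collapses the inequality to $\inner{u,d_\cJ}<0$, giving the second row.

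The main obstacle is precisely the sharpening from $A_\cJ u\in\mathrm{range}(A_\cI)$ (which is what the bare alternative delivers) to $A_\cJ u=0$ (which is what the lemma asserts). I plan to handle this by exploiting two pieces of flexibility: first, the freedom to translate $y^*$ within $y^*+\ker(A_\cI^T)$ without disturbing the first row; second, the separable, coordinate-diagonal Hessian structure supplied by Assumption~\ref{assumption:separable_phi}, under which the relevant tangent directions near a point of $\cS$ with $x_\cJ=0$ are naturally supported on $\cJ$. Together, these should allow the $\cI$-component $w$ of the Farkas certificate to be absorbed into an appropriate shift of $y^*$, leaving a purely $\cJ$-supported $u\ge 0$ with $A_\cJ u=0$ and $\inner{u,d_\cJ}<0$, which is the delicate step of the argument.
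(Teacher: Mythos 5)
Your first row is correct and is exactly how the paper handles it. For the second row, however, your route and the paper's diverge, and the obstacle you flag at the end is not one you can expect to overcome. The paper instead separates the compact convex set $\partial_\cJ f(x)$ from the closed convex set $S=\{s: s\ge A_\cJ^T y \text{ for some } y\}$, asserting $\partial_\cJ f(x)\cap S=\emptyset$; the conditions $A_\cJ u=0$ and $u\ge0$ then drop out because $S$ recedes along $\pm\,\mathrm{range}(A_\cJ^T)$ and $\R_+^{|\cJ|}$. Your coupled system $A_\cI^T y=d_\cI$, $A_\cJ^T y\le d_\cJ$ is the more faithful translation of $x\notin\Omega$ (membership in $\Omega$ requires a single multiplier serving both blocks), but, as you yourself observe, Farkas/Motzkin only delivers $A_\cI w+A_\cJ u=0$, i.e.\ $A_\cJ u\in\mathrm{range}(A_\cI)$, together with $\inner{u,d_\cJ-A_\cJ^Ty^*}<0$. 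Your plan to absorb $w$ by shifting $y^*$ inside $y^*+\ker(A_\cI^T)$ is inert: for any such shift $z$ one has $\inner{A_\cJ u,z}=-\inner{w,A_\cI^Tz}=0$, so nothing in the certificate changes; and the separable Hessian of Assumption \ref{assumption:separable_phi} plays no role in this purely linear-algebraic step. So the delicate step is left unproved, and the proposal is incomplete.

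The gap is in fact unfillable, because the sharpened certificate need not exist. Take $n=2$, $A=[1\;\,1]$, $b=1$, $f(x)=x_1$, $\bar x=(1,0)$, so $\cI=\{1\}$, $\cJ=\{2\}$, $d=(1,0)$. Then $d_\cI=A_\cI^Ty$ with $y=1$ while $d_\cJ=0\not\ge A_\cJ^Ty=1$, so $\bar x\in\cS\setminus\Omega$; yet $A_\cJ u=u$, hence $A_\cJ u=0$ forces $u=0$ and $\inner{u,d_\cJ}<0$ is impossible. (The Motzkin certificate is $w=-1$, $u=1$, with $A_\cJ u=1\neq0$.) The same example shows that the paper's disjointness claim fails ($0=d_\cJ\in S$): the paper's separation argument silently discards the coupling between the $\cI$ and $\cJ$ blocks at precisely the point where your honest attempt gets stuck. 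Note that the trajectory does still escape $(1,0)$ here, since $\dot x_2=x_1x_2>0$, but the positive drift comes from the multiplier term $\inner{u,A_\cJ^Ty}$ that the condition $A_\cJ u=0$ is designed to annihilate; any correct escape certificate (and any repair of the lemma) must retain that multiplier contribution rather than eliminate it.
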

\begin{proof}
Define $S:=\{s:s\geq A^T_{\cJ}y_{\cJ}, \text{ for some }y_\cJ\}$. Since $x\in\cS\setminus\Omega$, $S$ is a convex set and $\partial_\cJ f(x)\cap S=\emptyset$. By separate theorem, we have there exists $u\in\R^{|\cJ|}$, such that
\[
\inner{u,d(x)_\cJ}<\inner{u,r}+\inner{u,A^T_\cJ y_\cJ}, \text{ for any }r\geq0,\;y_\cJ\in\R^{|\cJ|},\;d(x)\in\partial f(x).
\]
Since $y_\cJ$ is arbitrary, we have $A_\cJ u=0$. Since $r$ is any nonnegative vector, we have $u\geq0$. Finally, we get $\inner{u,d(x)_\cJ }<0$, for any $d(x)\in\partial f(x)$.
\end{proof}

We now demonstrate that the trajectory $ x(t) $ can exit a neighborhood of a spurious stationary point in finite time. However, this finite‐time exiting result does not rule out the possibility that the trajectory may later re‐enter the neighborhood.

\begin{proposition}
\label{prop:exiting}
Suppose Assumption \ref{assumption:separable_phi} holds. For any spurious stationary point $ x \in \mathcal{S} \setminus \Omega $, there exists a neighborhood $ \mathcal{N} $ such that if the trajectory $ x(t) $ enters $ \mathcal{N} $ at time $ \tau_k $, defined as:
\[\tau_k:=\inf\qty{t\geq T_{k-1}:x(t)\in\cN\cap\cM\cap C},
\]
it exits $ \mathcal{N} $ in finite time $ T_k $, where:
\[
T_k:=\sup\qty{T:x(t)\in\cN\cap\cM\cap C:\text{ for any $t\in[\tau_k,\tau_k+T)$}},
\]
and $T_{-1}:=0$.
\end{proposition}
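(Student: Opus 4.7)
The plan is to construct a scalar escape function $W$ built from the separating vector guaranteed by Lemma \ref{le:alternative}, such that $\dot W$ is uniformly positive in a neighborhood $\cN$ of the spurious stationary point $\bar x \in \cS \setminus \Omega$, while $W$ itself remains bounded above on $\cN$. Once this is established, the trajectory cannot stay in $\cN$ indefinitely, which forces $T_k < \infty$. Lemma \ref{le:alternative} furnishes exactly what is needed: a vector $u \in \R^{|\cJ|}$, with $\cJ := \cJ(\bar x)$ fixed throughout, such that $u \geq 0$, $A_\cJ u = 0$, and $\inner{u, d_\cJ} < 0$ for every $d \in \partial f(\bar x)$.

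Concretely, I would define $W(x) := \inner{u, \nabla \phi(x)_\cJ} = \sum_{j \in \cJ} u_j\, \phi_j'(x_j)$. The proof would then proceed in three steps. First, by compactness of $\partial f(\bar x)$ combined with upper semicontinuity of $\partial f$, I would shrink $\cN$ and extract a constant $\beta > 0$ such that $\inner{u, d_\cJ} \leq -\beta$ uniformly for every $x \in \cN$ and every $d \in \partial f(x)$. Second, differentiating $W$ along the trajectory via the Bregman form \eqref{eq:DI_manifold_Breg}, which in the linear setting reads $\tfrac{d}{dt}\nabla\phi(x) + d(x) \in \mathrm{range}(A^T)$ for some measurable selection $d(x) \in \partial f(x)$, yields
\[
\dot W(t) \;=\; -\inner{u, d(x(t))_\cJ} + \inner{A_\cJ u, y(t)} \;\geq\; \beta,
\]
since the second inner product vanishes because $A_\cJ u = 0$. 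Third, since each $\phi_j'$ is increasing (by strict convexity of $\phi_j$) and $u_j \geq 0$, the value $W(x)$ is bounded above on $\overline{\cN}$ by some constant $M$. A direct integration of the lower bound on $\dot W$ would then yield $T_k \leq (M - W(\tau_k))/\beta < \infty$, using that $W(\tau_k)$ is finite since $x(\tau_k) \in C$ has strictly positive entries.

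The main obstacles I expect are twofold. (a) Uniform persistence of the strict separation: although $\inner{u, d_\cJ} < 0$ holds at $\bar x$, upgrading it to a uniform margin $-\beta$ on a full neighborhood requires carefully combining compactness of $\partial f(\bar x)$ with upper semicontinuity of $\partial f$, e.g.\ choosing $\cN$ small enough that $\partial f(x) \subset \partial f(\bar x) + \veps\B$ with $\veps < \beta/(2\|u\|)$. (b) The fact that $\phi_j'(x_j) \to -\infty$ as $x_j \to 0$ initially looks threatening; the choice to encode the separating vector via $\nabla\phi$ rather than $x$ itself is precisely what ensures the blow-up pushes $W$ to $-\infty$ rather than $+\infty$ (thanks to $u_j \geq 0$), so the one-sided upper bound $W \leq M$ needed to close the argument is preserved.
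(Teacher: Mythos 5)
Your proof is correct, and while it rests on the same key ingredient as the paper's argument --- the separating vector $u$ from Lemma \ref{le:alternative} with $u\geq 0$, $A_\cJ u=0$ and $\inner{u,d_\cJ}<0$ for all $d\in\partial f(\bar x)$ --- the mechanism for extracting a finite exit time is genuinely different. The paper specializes to the entropy kernel (justified earlier by the observation that the stable set is unchanged across kernels satisfying Assumption \ref{assumption:separable_phi}), writes the uniform bound coordinatewise as $\sum_i (u_i/x_i)\dot x_i\geq\delta$, fixes a single index $\bar i$ with $(u_{\bar i}/x_{\bar i})\dot x_{\bar i}\geq\delta_0$, and applies Gronwall's inequality to force $x_{\bar i}(t)\geq x_{\bar i}(\tau)\exp\bigl(\delta_0(t-\tau)/u_{\bar i}\bigr)\to\infty$, contradicting the boundedness of $\cN$. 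You instead aggregate the coordinates into the scalar potential $W(x)=\inner{u,\nabla\phi(x)_\cJ}$, whose derivative along the Bregman form \eqref{eq:DI_manifold_Breg} is $-\inner{u,d(x)_\cJ}\geq\beta$ precisely because $A_\cJ u=0$ annihilates the multiplier term, while $W\leq M$ on $\cN$ since each $\phi_j'$ is increasing and $u_j\geq 0$. Your route buys three things: it works for a general separable Legendre kernel without reducing to the entropy case; it yields an explicit exit-time bound $T_k\leq (M-W(x(\tau_k)))/\beta$; and it sidesteps a delicate point in the paper's version, namely that the sum bound only guarantees \emph{some} coordinate satisfies $(u_i/x_i)\dot x_i\geq\delta/n$ at each instant, whereas the Gronwall step implicitly needs the \emph{same} coordinate to work for all $t$. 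Two minor points to note: integrating $\dot W\geq\beta$ requires $t\mapsto\nabla\phi(x(t))$ to be absolutely continuous on the relevant interval, which is exactly what the formulation \eqref{eq:DI_manifold_Breg} supplies; and Assumption \ref{assumption:separable_phi} as printed states $\phi_i'(x_i)\to+\infty$ as $x_i\to 0_+$, which must be read as $-\infty$ (as the examples $t\log t$ and $-\log t$ show) for your one-sided upper bound on $W$ --- and indeed for the assumption itself --- to be consistent.
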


\begin{proof}


Assume $ x(t) $ has a limit point $ \bar{x} \in \mathcal{S} \setminus \Omega $, with $ \bar{x}_{\mathcal{I}} > 0 $ and $ \bar{x}_{\mathcal{J}} = 0 $. By the lemma, there exists $ u_{\mathcal{J}} \geq 0 $ such that:
$$\langle u_{\mathcal{J}}, \nabla^2 \phi(x)_{\mathcal{J}} \dot{x}_{\mathcal{J}} \rangle = -\langle u_{\mathcal{J}}, \partial_{\mathcal{J}} f(x) \rangle > \delta > 0,$$
for some $ \delta > 0 $ and for $ x $ sufficiently close to $ \bar{x} $. Without loss of generality, set $ \bar{x} = 0 $ (or restrict attention to $ \mathcal{J}(\bar{x}) $). Consider a neighborhood $ \mathcal{N} = (0, \epsilon)^n \cap \mathcal{M} \cap C $, where:
$$\sum_{i=1}^n \frac{u_i}{x_i} \dot{x}_i \geq \delta.$$
Thus, there exists an index $ \bar{i} $ and $ \delta_0 > 0 $ such that $ \frac{u_{\bar{i}}}{x_{\bar{i}}} \dot{x}_{\bar{i}} \geq \delta_0 $. For a fixed $ k $ (omitting the subscript for simplicity), suppose by contradiction that $ x(t) \in \mathcal{N} $ for all $ t \geq \tau $. Since $ x(\tau) $ is in the relative interior, $ x_{\bar{i}}(\tau) \in (0, \epsilon) $. Applying Gronwall’s inequality:
$$x_{\bar{i}}(t) \geq x_{\bar{i}}(\tau) \exp \left( \frac{\delta_0}{u_{\bar{i}}} (t - \tau) \right),$$
which implies $ x_{\bar{i}}(t) \to \infty $ as $ t \to \infty $, contradicting the boundedness of $ \mathcal{N} $. Hence, $ x(t) $ exits $ \mathcal{N} $ in finite time $ T $.
\end{proof}

As a direct consequence of Proposition \ref{prop:exiting}, if the trajectory $x(t)$ of \eqref{eq:DI_manifold} converges, it must converge to a stationary point of the original optimization problem. Indeed, convergence to a spurious stationary point would contradict Proposition \ref{prop:exiting}, which ensures that the trajectory exit the neighborhood of such points in finite time.

\begin{proposition}
    \label{prop:covergent_traj}
    Suppose the trajectory $x(t)$ of \eqref{eq:DI_manifold} converges to a point $x^*$. Then, $x^*\in\Omega$. 
\end{proposition}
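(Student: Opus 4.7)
The plan is a short proof by contradiction that combines the two preceding results. First I would invoke the lemma established just before Proposition \ref{prop:exiting}: every limit point of $x(t)$ lies in the stable set $\mathcal{S}$. Since the assumed convergence $x(t)\to x^*$ makes $\{x^*\}$ the sole $\omega$-limit point of the trajectory, we get $x^*\in\mathcal{S}$ for free, and the remaining task is to upgrade this to $x^*\in\Omega$.

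The crux is to rule out $x^*\in\mathcal{S}\setminus\Omega$. Suppose, for contradiction, that $x^*$ is a spurious stationary point. Then Proposition \ref{prop:exiting} supplies a neighborhood $\mathcal{N}$ of $x^*$ with the following property: whenever the trajectory enters $\mathcal{N}\cap\mathcal{M}\cap C$ at some time $\tau_k$, the associated exit time $T_k$ is finite, so $x(\tau_k+T_k)\notin \mathcal{N}$. On the other hand, the assumed convergence $x(t)\to x^*$ means that for this particular $\mathcal{N}$ there exists $T_0>0$ with $x(t)\in\mathcal{N}$ for every $t\geq T_0$; taking $\tau=T_0$ as an entry time, the trajectory never leaves $\mathcal{N}$ thereafter. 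This directly contradicts the finite exit time guaranteed by Proposition \ref{prop:exiting}. Hence $x^*\notin\mathcal{S}\setminus\Omega$, and combined with $x^*\in\mathcal{S}$ we conclude $x^*\in\Omega$.

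The main subtlety here is bookkeeping rather than an analytic obstacle: one must verify that Proposition \ref{prop:exiting} is actually available in the setting of the current proposition. As stated, \ref{prop:exiting} is proved under Assumption \ref{assumption:separable_phi} (a separable Legendre kernel over the nonnegative orthant) and with $\mathcal{M}$ an affine subspace, whereas the present proposition is phrased directly in terms of the manifold flow \eqref{eq:DI_manifold}. I would therefore either restrict the statement to that regime, or note that the Gronwall-type estimate used in the proof of Proposition \ref{prop:exiting}, applied coordinate-wise along $\dot{x}=-\mathrm{P}_{T_x\mathcal{M}}\nabla^2\phi(x)^{-1}d(x)$ to the boundary index $\bar\imath$ produced by Lemma \ref{le:alternative}, carries over unchanged: the tangential projection $\mathrm{P}_{T_x\mathcal{M}}$ does not affect the sign of $\langle u_{\mathcal{J}},\nabla^2\phi(x)_{\mathcal{J}}\dot x_{\mathcal{J}}\rangle$ near $x^*$, so the same exponential escape bound forces a finite exit from $\mathcal{N}$. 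Once this is confirmed, no further computation is required and the contradiction closes the proof.
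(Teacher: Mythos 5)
Your argument is correct and is essentially the paper's own: the paper justifies Proposition \ref{prop:covergent_traj} in one line as a direct consequence of Proposition \ref{prop:exiting}, via exactly the contradiction you describe (convergence to a spurious point would force the trajectory to remain forever in a neighborhood from which it must exit in finite time). Your added remark about verifying that Proposition \ref{prop:exiting} applies beyond the separable-kernel/affine setting is a fair point the paper only addresses informally in its later extension to nonlinear constraints.
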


Note that \cite[Theorem 4.2]{alvarez2004hessian} proves that when $f$ is quasi-convex and $\phi$ is Bregman function, $x(t)$ converges, thus as a byproduct, we prove that $x(t)$ converge to the stationary point. As a remark, their approach is quite different from ours, where they directly prove the convergence of this special function class, while we prove for a more general case for the finite time escaping property, and as the byproduct of our general results, we prove the convergence to stationary point. 
However, this result does not imply $x(t)$ never subsequentially converge to the spurious stationary point. In the next section, we will explore the subsequential avoidance of  spurious stationary point.

In \cite[Theorem 4.2]{alvarez2004hessian}, it is shown that for quasi-convex $f$ and a Bregman function $\phi$, the trajectory $x(t)$ converges. Thus as a byproduct, we prove that $x(t)$ converge to the stationary point in their setting. Our approach differs: we establish a more general finite-time exit property from spurious stationary points.
However, this result does not preclude the possibility that subsequences of $x(t)$ might approach spurious stationary points. In the next section, we will investigate conditions under which subsequential convergence to spurious stationary points is avoided.

We now extend these results to the more general case of \textbf{nonlinear} constraints, where the feasible set is defined as $C = \{ x : g_i(x) < 0, \, i = 1, \dots, m \}$ with smooth functions $g_i$. For points $x \in \mathcal{S} \setminus \Omega$, we show that there exists a direction along which the trajectory is repelled, ensuring escape from neighborhoods of spurious stationary points.

\begin{lemma}
For any \( x \in \cS \setminus \Omega \), there exists \( u \in T_x \cM \) such that:
\[
\begin{array}{c}
\langle u, \nabla g_i(x) \rangle \leq 0, \quad \langle u, \partial f(x) \rangle < 0,\quad \forall i \in \cJ(x).
\end{array}
\]
\end{lemma}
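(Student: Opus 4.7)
The plan is to invoke strict separation of the origin from a closed convex set, which amounts to a Motzkin-type theorem of the alternative. Membership in $\cS\setminus\Omega$ encodes precisely the failure of the inclusion $0\in\partial f(x)+N_\cM(x)+N_{\overline C}(x)$, and strict separation will convert this failure into the desired feasible descent direction $u$.

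First I would fix $x\in\cS\setminus\Omega$ and, using LICQ at $x$ together with a standard constraint qualification for the active inequality constraints, write
\[
N_\cM(x)=\mathrm{span}\{\nabla c_j(x)\}_j,\qquad N_{\overline C}(x)=\mathrm{cone}\{\nabla g_i(x):i\in\cJ(x)\},
\]
so that $N_\cM(x)^\perp=T_x\cM$ and $N_{\overline C}(x)$ is a finitely generated, hence closed, convex cone. Define
\[
A:=\partial f(x)+N_\cM(x)+N_{\overline C}(x).
\]
Since $\partial f(x)$ is compact convex (from local Lipschitzness of $f$) and the sum $N_\cM(x)+N_{\overline C}(x)$ is a polyhedral convex cone, $A$ is closed and convex; the hypothesis $x\notin\Omega$ is exactly $0\notin A$.

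Next, strict separation of $\{0\}$ from $A$ produces $v\in\R^n$ and $\alpha>0$ with $\langle v,a\rangle\geq\alpha$ for every $a\in A$. Three structural consequences would then follow. Because $N_\cM(x)$ is a linear subspace, $\lambda\mu\in N_\cM(x)$ for every $\lambda\in\R$ and $\mu\in N_\cM(x)$, so the separating inequality forces $\langle v,\mu\rangle=0$ for every $\mu\in N_\cM(x)$, i.e.\ $v\in T_x\cM$. Because $N_{\overline C}(x)$ is a cone, rescaling its generators gives $\langle v,\nabla g_i(x)\rangle\geq 0$ for every $i\in\cJ(x)$. Finally, setting the cone components to zero yields $\langle v,d\rangle\geq\alpha>0$ for every $d\in\partial f(x)$. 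Taking $u:=-v$ reverses each inequality and produces $u\in T_x\cM$ with $\langle u,\nabla g_i(x)\rangle\leq 0$ for $i\in\cJ(x)$ and $\langle u,d\rangle\leq-\alpha<0$ uniformly over $d\in\partial f(x)$, which is the claim.

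The only real technical subtlety is ensuring that $A$ is closed so that weak separation can be upgraded to the \emph{strict} inequality $\langle u,d\rangle<0$; this is what yields the strict descent direction rather than a merely non-ascent one. This is handled by the polyhedrality of the normal cones (under LICQ/MFCQ on the active constraints) together with compactness of $\partial f(x)$, and neither ingredient alone suffices. Under these standing assumptions the separation argument goes through cleanly and produces the single vector $u$ advertised in the lemma.
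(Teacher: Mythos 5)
Your proposal is correct and follows essentially the same route as the paper: both arguments reduce $x\in\cS\setminus\Omega$ to the disjointness of $\partial f(x)$ (compact convex) from the polyhedral cone generated by $N_\cM(x)$ and the active constraint gradients, and then apply strict separation, reading off $u\in T_x\cM$ from the linearity of the $\mu$-part, $\langle u,\nabla g_i(x)\rangle\le 0$ from the conic $\lambda$-part, and the strict inequality on $\partial f(x)$ from the separation margin. The only cosmetic difference is that you separate $0$ from the Minkowski sum $\partial f(x)+N_\cM(x)+N_{\overline C}(x)$ while the paper separates $-\partial f(x)$ from the cone directly, and you are somewhat more explicit about the closedness needed for strict separation.
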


\begin{proof}
Note that $\cS=\qty{x:0\in\partial f(x)+\nabla c(x)\mu+\nabla g_{\cJ}(x)\lambda, g_\cJ(x)=0}$, and

$\Omega=\qty{x:0\in\partial f(x)+\nabla c(x)\mu+\nabla g_{\cJ}(x)\lambda, \lambda\geq0, g_\cJ(x)=0}$. Define $S=\qty{\nabla c(x)\mu+\nabla g_{\cJ}\lambda:\lambda\geq0}$. For any $x\in\cS\setminus\Omega$, we have $-\partial f(x)\cap S=\emptyset$. By separate theorem, there exists $u\in\R^{n}$, such that
\[
\inner{-\partial f(x),u}>\sup_{\mu,\lambda\geq0}\inner{u,\nabla c(x)\mu+\nabla g_{\cJ}(x)\lambda}.
\]
Since $\mu$ is arbitrary, we have $\nabla c(x)^Tu=0$. Thus, $u\in T_x\cM$. Moreover, $\lambda$ is an arbitrary positive vector, $\nabla g_\cJ(x)^Tu\leq0$. This completes the proof.
\end{proof}
Similar to the proof of Proposition \ref{prop:exiting}, there exists a neighborhood of $x\in\cS\setminus\Omega$, such that any $x(t)$ in this neighborhood,
\[
\inner{u,\nabla^2\phi(x)\dot{x}}>\delta>0.
\]
Analogous to the linear case, for $x \in \mathcal{S} \setminus \Omega$, there exists a neighborhood where the trajectory $x(t)$ satisfies:
$$\langle u, \nabla^2 \phi(x) \dot{x} \rangle > \delta > 0,$$
for some $u \in T_x \mathcal{M}$ as identified in the lemma. This condition ensures that the trajectory exits the neighborhood in finite time, mirroring the results of Proposition \ref{prop:exiting} and Proposition \ref{prop:covergent_traj} for the nonlinear constraint setting.

\section{Spurious stationary points avoidance}
The classical results for differential inclusions (DI) indicate that the $\omega$-limit set of the trajectory generated by DI \eqref{eq:DI_manifold} is contained within the stable set $\mathcal{S}$. In this section, we demonstrate that under certain constraint qualifications (CQ) and isolated conditions for $\mathcal{S} \cap \partial C$, the trajectory converges or subsequentially converges to a true stationary point in $\Omega$. When these regularity conditions are not satisfied, we propose a novel perturbation strategy to ensure that the trajectory converges to true stationary point of the perturbed problem.

\subsection{Avoidance under complementarity condition}
In this subsection, we establish that spurious stationary points can be avoided under a complementarity condition. We begin with the linear constraint case, where $C = \{x \in \mathbb{R}^n : x > 0\}$ and $\mathcal{M} = \{x \in \mathbb{R}^n : Ax = b\}$, and later extend the results to nonlinear constraints.

Recall that the moving direction $v(x)=-{\rm P}_xH(x)^{-1}d(x)$, and dual variable and dual slack variable are given by
\[
\begin{aligned}
   v(x)=&-{\rm P}_xH(x)^{-1}d(x), \\
   y(x)=&(AH(x)^{-1}A^T)^{-1}AH(x)^{-1}d(x).
\end{aligned}
\]
Next, we show that the trajectory will never converge to the spurious stationary point, in which the complementarity condition is strictly violated. The intuition is that such points possess a neighborhood from which the trajectory is repelled from any directions, preventing subsequential convergence. Furthermore, if the complementarity condition holds for the $\omega$-limit set, the trajectory converges to a true stationary point.

\begin{theorem}
\label{thm:completmentarity}
Let $x(t)$ be the trajectory of \eqref{eq:DI_manifold}. It holds that
\begin{enumerate}
    \item[(i)] For any point $x$ in the stable set, such that $s(x)+x<0$ holds, then, $x(t)$ will never subsequentially converge to $x$.
    \item[(ii)] If the complementarity condition: $s(x)+x\geq0$ holds at $\omega(x_0)$, then any limit point of the trajectory $\{x(t)\}$ of \eqref{eq:DI_manifold} is the stationary point of the original problem \eqref{prob:PCP}.
\end{enumerate}
\end{theorem}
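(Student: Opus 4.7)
The plan for (i) is to construct a Bregman-distance Lyapunov function that is strictly increasing along the flow in a neighborhood of the spurious point $\bar x$, thereby forbidding subsequential convergence to $\bar x$. Concretely, I take
\[
\cD_\phi(\bar x,x) \;=\; \phi(\bar x)-\phi(x)-\inner{\nabla\phi(x),\bar x-x},
\]
which is well-defined and nonnegative for $x\in C$ and $\bar x\in\overline C$. Using the equivalent formulation \eqref{eq:DI_manifold_Breg} of the flow, one has $\nabla^2\phi(x(t))\dot x(t) = -s(x(t))$ along the trajectory, so a direct chain-rule computation yields
\[
\tfrac{d}{dt}\cD_\phi(\bar x,x(t)) \;=\; -\inner{\nabla^2\phi(x)\dot x,\bar x-x} \;=\; \inner{s(x),\bar x-x}.
\]

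Next I would decompose this inner product along the active/inactive split $\cI(\bar x)\cup\cJ(\bar x)$. Stability of $\bar x$ enforces $s_i(\bar x)=0$ for $i\in\cI$, while the strict-violation hypothesis $s(\bar x)+\bar x<0$ pins down $s_j(\bar x)<0$ strictly for every $j\in\cJ$ (since $\bar x_j=0$ there). Writing
\[
\inner{s(x),\bar x-x} \;=\; \sum_{i\in\cI}s_i(x)(\bar x_i-x_i)-\sum_{j\in\cJ}s_j(x)\,x_j,
\]
continuity of the dual slack together with compactness shows that on a sufficiently small neighborhood $U_\delta$ of $\bar x$ the first sum is of order $\|x-\bar x\|^2$, while the second is bounded below by $c\sum_{j\in\cJ}x_j$ for some $c>0$. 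Hence $\tfrac{d}{dt}\cD_\phi(\bar x,x(t))\geq 0$ on $U_\delta$, with strict inequality off $\bar x$.

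Armed with this monotonicity I argue by contradiction. Suppose $x(t_k)\to\bar x$ for some $t_k\to\infty$, so $\cD_\phi(\bar x,x(t_k))\to 0$. If the trajectory eventually remains in $U_\delta$, monotonicity at once forces $\cD_\phi(\bar x,x(t_k))\geq\cD_\phi(\bar x,x(T_0))>0$, a contradiction. Otherwise, for each $t_k$ let $\tau_k<t_k$ be the last entry time into $U_\delta$, so $x(\tau_k)\in\partial U_\delta$; monotonicity on $[\tau_k,t_k]$ gives $\cD_\phi(\bar x,x(\tau_k))\leq\cD_\phi(\bar x,x(t_k))\to 0$, but lower semicontinuity of $\cD_\phi(\bar x,\cdot)$ (which vanishes only at $\bar x$) on the closed set $\partial U_\delta\cap\overline C$ (bounded away from $\bar x$) ensures a strictly positive infimum, another contradiction. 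Part (ii) then follows at once from the earlier lemma that every limit point lies in $\cS$: at any $x^*\in\cS$ one has $s_i(x^*)=0$ for $i\in\cI(x^*)$ and $x^*_j=0$ for $j\in\cJ(x^*)$, so the hypothesis $s(x^*)+x^*\geq 0$ reduces to $s_j(x^*)\geq 0$ for $j\in\cJ(x^*)$, which together with $x^*\geq 0$ and $x^*\circ s(x^*)=0$ are exactly the KKT inequalities for \eqref{prob:PCP}, whence $x^*\in\Omega$. The main obstacle is securing the strict negativity of $s_j(x(t))$ uniformly on $U_\delta$ for every admissible measurable selection of the nonsmooth flow; I would handle this by combining upper semicontinuity of $\partial f$ with the standing continuity extension of ${\rm P}_x H(x)^{-1}$ to $\cM\cap\overline C$.
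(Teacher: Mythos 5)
Your part (ii) is essentially the paper's argument (limit points lie in $\cS$, $x\circ s(x)=0$ kills $s_{\cI}$, complementarity gives $s_{\cJ}\ge 0$, hence $s(x)\in -N_{\overline C}(x)$), and your derivative computation $\tfrac{d}{dt}\cD_\phi(\bar x,x(t))=\inner{s(x),\bar x-x}$ is correct (the $A^Ty$ component drops out since both points lie in $L$). However, part (i) has a genuine gap at the key step. You claim that on a small neighborhood the $\cI$-sum $\sum_{i\in\cI}s_i(x)(\bar x_i-x_i)$ is $O(\norm{x-\bar x}^2)$ and is dominated by the $\cJ$-sum, which you bound below by $c\sum_{j\in\cJ}x_j$. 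Neither half of this survives scrutiny. First, $s_i(x)=d(x)_i-A_i^Ty(x)$ with $d(x)\in\partial f(x)$ a measurable selection of an outer-semicontinuous (not Lipschitz, not even single-valued) map, so there is no quadratic bound; at best, under smoothness you get $s_{\cI}(x)=o(1)$, giving an $\cI$-sum of indeterminate sign and size $o(\norm{x-\bar x})$. Second, and more fundamentally, $\sum_{j\in\cJ}x_j$ is \emph{not} comparable to $\norm{x-\bar x}$: a trajectory can approach $\bar x$ with $x_{\cJ}$ exponentially small relative to $\norm{x_{\cI}-\bar x_{\cI}}$, in which case the nonnegative $\cJ$-contribution is swamped by the signless $\cI$-contribution and $\tfrac{d}{dt}\cD_\phi(\bar x,x(t))$ can be negative. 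So the monotonicity on which your whole contradiction rests is not established. (A smaller issue: $\cD_\phi(\bar x,x)\to 0$ as $x\to\bar x\in\partial C$ is a Bregman-function property that must be checked for the kernels of Assumption \ref{assumption:separable_phi}; it holds for the entropy kernel but is not automatic for a general Legendre $\phi$.)

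The paper avoids this trap by arguing coordinate-wise rather than through a scalar Lyapunov function: after reducing to the active block (note that $s(\bar x)+\bar x<0$ together with $\bar x\circ s(\bar x)=0$ forces $\cI(\bar x)=\emptyset$ on the relevant coordinates, so one may take $\bar x_{\cJ}=0$), it fixes a box neighborhood $\cN=(\bar x,\bar x+\epsilon\mathbf{1})\cap\cM\cap C$ on which $s(x)\le-\delta<0$, deduces that every active coordinate of $\dot x=-{\rm P}_xH(x)^{-1}s(x)$ is bounded below by a positive quantity while the trajectory stays in $\cN$, and concludes that $x_{\cJ}(t)$ increases monotonically there. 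Subsequential convergence $x_{\cJ}(t_k)\to 0$ would require these coordinates to decrease inside $\cN$, which is impossible; and the same monotonicity forces a finite-time exit. This sidesteps the $\cI$-cross-terms entirely because convergence to $\bar x$ is refuted coordinate by coordinate on $\cJ$ alone. If you want to salvage your approach, replace $\cD_\phi(\bar x,\cdot)$ by the partial quantity $\sum_{j\in\cJ}x_j$ (or $\min_{j\in\cJ}x_j$), whose time derivative involves only the $\cJ$-block and inherits the uniform positive sign you already isolated.
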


\begin{proof}
For points in the relative interior of the feasible set, the result holds trivially, so we focus on $\mathcal{S} \cap \partial C$. We prove (ii) first. Since $\omega$-limit set is included in the stable set $\{x:0\in {\rm P}_x(x\circ\partial f(x)\}$. By definition of ${\rm P}_x$, we have 
\[
d(x)+H(x)v(x)-A^Ty(x)=0,
\]
where $d(x)\in\partial f(x)$. Note that $s(x)=-H(x)v(x)$, thus $s(x)=d(x)-A^Ty(x)$. By complementarity condition, $s_{\cJ(x)}\geq0$. Moreover, by the equivalence of stable set, we have 
\[
x\circ s(x)=x\circ(d(x)-A^Ty(x))=v(x)=0.
\]
Thus, $x_{\cI(x)}>0$ implies $s_{\cI(x)}(x)=0$. Thus, $s(x)\in-N_{\overline{C}}(x)$. Therefore, $0\in\partial f(x)+N_L+N_{\overline{C}}$. 

Next, we prove (i). Without loss of generality, assume $\bar x=0$, otherwise, we only consider the indices set $\cJ(\bar x)$. For any point $\bar x$ in the stable set such that $s(\bar x)<0$, we have there exists $\epsilon>0$ and a neighborhood of $\bar x$: $\cN=(\bar x,\bar x+\epsilon\mathbf{1})$, such that for any $x\in\cN\cap\cM\cap C$, there exists an $\delta>0$, such that $s(x)\leq-\delta<0$. The differential inclusion $\dot{x}=-v(x)\in-{\rm P}_x(H(x)^{-1}\partial f(x))$ implies $\dot x>\delta>0$ as long as $x(t)$ lie in the neighborhood $\cN\cap\cM\cap C$ of $\bar x$. 

We divide by two cases. We first show that if $x(0)\in\cN^c\cap\cM\cap C$, it will enter $\cN\cap\cM\cap C$. Otherwise,  let $t_0:=\inf\{t:x(t)\in\cN\cap\cM\cap C\}$, we have $t_0<\infty$. By continuity, $x(t_0)\in\partial\cN\cap\cM\cap C$. Since $x(t)$ entering $\cN\cap\cM\cap C$, there exists $t_2>t_1>t_0$, such that $x(t_1)_{i_0}>x(t_2)_{i_0}$ for some $i_)$. Note that 
\[x(t_2)_{i_0}=x(t_1)_{i_0}+\int_{t_1}^{t_2}\dot x(s)_{i_0}ds\geq x(t_1)_{i_0}+\delta(t_2-t_1),
\] 
which leads to a contradiction. In this case, $\bar x$ is not a limit point of $x(t)$. Second, if $x(t_0)\in\cN\cap\cM\cap C$, we show that $x(t)$ will exit the neighborhood and will return. Since $x(t)=x(t_0)+\int_{t_0}^t\dot x(s) ds\geq x(t_0)+\delta(t-t_0)>0$, thus, as long as $x(t)\in\cN\cap\cM\cap C$, $x(t)$ increases linearly to time $t$. $x(t)$ will leave $\cN\cap\cM\cap C$, otherwise, we have 
\[
x\qty(t_0+\frac{2\epsilon-\norm{x(t_0)}_\infty}{\delta})\geq x(t_0)+\delta\frac{2\epsilon-\norm{x(t_0)}_\infty}{\delta}\geq2\epsilon.
\]
It is contradictory. Once, $x(t)$ leaves the neighborhood, we can argue in case 1. Thus, $x(t)$ will never subsequentially converge to such spurious stationsrt point.
\end{proof}

Finally, we make extension to \textbf{nonlinear} constraint case. The complementarity condition can be extended to nonlinear constraints, where $C = \{x \in \mathbb{R}^n : g(x) < 0\}$, $\overline{C} = \{x \in \mathbb{R}^n : g(x) \leq 0\}$, and $\mathcal{M} = \{x \in \mathbb{R}^n : c(x) = 0\}$, assuming the Linear Independence Constraint Qualification (LICQ) holds everywhere on $\mathcal{M}$. Define:
$$\mathcal{J}(x) = \{j : g_j(x) = 0\}, \quad \mathcal{I}(x) = \{i : g_i(x) < 0\}.$$
Under the conditions of Proposition \ref{prop:equiv_stable}, the stable set satisfies:
$$0 \in \nabla^2 \phi(x)^{-1} (\partial f(x) + N_x \mathcal{M}) \quad \Longleftrightarrow \quad 0 \in \partial f(x) + N_x \mathcal{M} + \text{span}(N_C(x)).$$
The moving direction $v(x)$ is defined as the solution to:
$$\min_{v \in T_x \mathcal{M}} \left\{ \langle d(x), v \rangle + \frac{1}{2} \|v\|_x^2 \right\},$$
where $T_x \mathcal{M} = \{u : A_x u = 0\}$ for linear map $A_x=\nabla c(x)^T$. The projection onto $T_x \mathcal{M}$ is:
$${\rm P}_x = I - H(x)^{-1} A_x^T (A_x H(x)^{-1} A_x^T)^{-1} A_x,$$
with $v(x) = -{\rm P}_x H(x)^{-1} d(x)$ and dual variable $y(x) = (A_x H(x)^{-1} A_x^T)^{-1} A_x H(x)^{-1} d(x)$. The slack variable is $s(x) = -H(x) v(x) = d(x) - \nabla c(x) \lambda(x)$. {If the complementarity condition $s(x) + x \geq 0$ holds for the $\omega$-limit set, then $s(x) \in -N_{\overline{C}}(x)$: has some issues}, ensuring that limit points are stationary points. Conversely, if the complementarity is strictly violated, the repelling mechanism ensures that $x(t)$ does not subsequentially converge to such spurious stationary points, analogous to the linear case.

\subsection{Avoidance under isolated condition}
In this subsection, we demonstrate that if the $\omega$-limit set (contained in $\cS$) contains only isolated points with respect to $\mathcal{M} \cap \partial C$, then the trajectory converges to a true stationary point in $\Omega$.
\begin{theorem}
\label{thm:isolated}
Suppose Assumption \ref{assumption:separable_phi} holds, if the $\cS$ only has isolated points with respect to $\cM\cap\partial C$, 
then $\{x(t)\}$ converges to a stationary point in $\Omega$.   
\end{theorem}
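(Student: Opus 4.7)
The plan is to combine three ingredients that are already in hand: compactness and connectedness of the $\omega$-limit set, the identification $\cS\cap C=\Omega\cap C$ in the relative interior, and the finite-time repelling from spurious stationary points provided by Proposition \ref{prop:exiting}.

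First I would establish that the $\omega$-limit set $\omega(x_0)$ is a nonempty, compact, connected subset of $\cS$. Boundedness of the trajectory (from Assumption \ref{assumption:well_posed} together with Proposition \ref{prop:well_pose}) gives nonemptiness and compactness; connectedness follows from the standard argument for bounded absolutely continuous curves, namely that each tail $\overline{\{x(s):s\geq t\}}$ is connected and $\omega(x_0)$ is the decreasing intersection of these compact connected sets. The inclusion $\omega(x_0)\subset\cS$ has already been proved in the previous subsection.

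Next I would split into two cases depending on whether $\omega(x_0)$ touches the boundary $\cM\cap\partial C$. If $\omega(x_0)\cap(\cM\cap\partial C)=\emptyset$, then $\omega(x_0)\subset\cM\cap C$, and for any $x$ in the open set $C$ one has $N_{\overline{C}}(x)=\{0\}$. Combined with the alternative characterization of $\cS$ in Proposition \ref{prop:relation_NC_Null}, this gives $\cS\cap C=\Omega\cap C$, so $\omega(x_0)\subset\Omega$; under the isolation hypothesis interpreted so that limit points are isolated in $\cS$, connectedness of $\omega(x_0)$ then reduces it to a single point and yields convergence. The substantive case is when there exists $\bar x\in\omega(x_0)\cap\cM\cap\partial C$. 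The isolation hypothesis supplies an open neighborhood $U$ of $\bar x$ with $\cS\cap U=\{\bar x\}$, so $\omega(x_0)\cap U=\{\bar x\}$, making $\{\bar x\}$ both open and closed in $\omega(x_0)$. Connectedness of $\omega(x_0)$ then forces $\omega(x_0)=\{\bar x\}$, which is exactly convergence $x(t)\to\bar x$. To upgrade this to $\bar x\in\Omega$, I would invoke Proposition \ref{prop:exiting}: if $\bar x$ were spurious, there would be a fixed neighborhood $\cN$ of $\bar x$ that the trajectory exits in finite time on every visit, contradicting the fact that $x(t)\to\bar x$ keeps $x(t)\in\cN$ for all sufficiently large $t$. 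Hence $\bar x\in\Omega$.

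The main obstacle I anticipate is the precise reading of \emph{isolated with respect to $\cM\cap\partial C$}. The clean connectedness-plus-isolation argument needs every boundary point of $\cS$ to be isolated in $\cS$ as a subset of $\R^n$, not merely within $\cS\cap\cM\cap\partial C$; under the latter weaker reading one would still have to rule out a continuum in $\omega(x_0)$ joining a boundary stable point to nearby interior points of $\cS$, and to promote subsequential convergence to genuine convergence of the full trajectory, for which a \L{}ojasiewicz-type inequality for $f$ along the flow would be the natural device. In either reading, however, the finite-time exit from Proposition \ref{prop:exiting} is the decisive mechanism that excludes the spurious alternative, so the overall architecture of the proof is the same.
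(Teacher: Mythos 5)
Your proposal is correct and follows essentially the same route as the paper's proof: both show that $\omega(x_0)\subset\cS$ is compact and connected, use the isolation hypothesis at a boundary limit point $\bar x$ to separate $\{\bar x\}$ from the rest of $\omega(x_0)$ and conclude $\omega(x_0)=\{\bar x\}$, and then invoke the finite-time exit of Proposition \ref{prop:exiting} to rule out $\bar x\in\cS\setminus\Omega$. The reading of ``isolated'' that you flag as necessary (isolated in $\cS$ as a subset of $\R^n$, not merely within $\cS\cap\cM\cap\partial C$) is precisely the one the paper uses implicitly when it asserts $\dist(\bar x,\omega(x_0)\setminus\{\bar x\})>0$.
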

\begin{proof}
Note that $\cS$ is a closed set, so we have $\lim_{t\rightarrow\infty}{\rm dist}(x(t),\cS)=0$. Then, there are two cases. Case 1: Any limit point of $\{x(t):t\geq0\}$ lies at the interior $C$. Since any point in $\cS\cap C$ is the stationary point, it is done. Case 2: There exists $\{x(t_j)\}$ such that $\lim_{j\rightarrow\infty}t_j=\infty$,  $\lim_{j\rightarrow\infty}x(t_j)=\bar x$, and $\bar x\in\cS\cap\partial C$. We prove that $\lim_{t\rightarrow\infty}x(t)=\bar x$. Otherwise, there exists another distinct limit point of $\{x(t)\}$ denoted as $\bar x'\in\cS$. Since ${\rm dist}(\bar x,\omega(x_0)\setminus\{\bar x\})=:\alpha>0$. Choose the open ball $U=\mathbb{B}^\circ(\bar x,\frac{\alpha}{4})$ and the open set $V=\{x:{\rm dist}(x,\omega(x_0)\setminus\{\bar x\})<\frac{\alpha}{4}\}$. Then $U\cap V=\empty$ and $\omega(x_0)\subset U\cup V$. This is contradict to the fact that $\omega(x_0)$ is a connected set. So in Case two, we prove that $\lim_{t\rightarrow\infty}x(t)=\bar x$ for some $\bar x\in\cS\cap\partial C$.  By Proposition \ref{prop:exiting}, once $x(t)$ converges, it will converge to the stationary point. This complete the proof.
\end{proof}


When the isolated condition does not hold, we propose a random perturbation strategy, in which a novel noise adapted to the kernel function is imposed. We first consider the $\cC^2(\R^n)$ function $f$, then the result is extended to the stratifiable function \cite{bolte2007clarke,davis2020stochastic}.
\begin{lemma}
\label{le:perturb}
Suppose $f$ is a $C^2$ function, define $f_v(x):=f(x)+\inner{\nabla\phi(x),v}$ and $c_u(x)=c(x)+u$. Then for any $\epsilon>0$, almost all $(u,v)\in\epsilon(\mathbb{B}^m\times\mathbb{B}^n)$, the stable set for the perturbed problem: $S(u,v)=\{x:0\in [\nabla^2\phi(x)]^{-1} (\nabla f_v(x)+N_{\mathcal{M}_u}(x))\}$ has no cluster points. 
\end{lemma}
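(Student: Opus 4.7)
The plan is to invoke a parametric transversality (Sard) argument: view the stable set as the zero set of a KKT-type system depending on the parameters $(u,v)$, verify that the partial derivative with respect to $(u,v)$ is surjective, and conclude by Sard's theorem that for almost every $(u,v)$ the corresponding zero set is a smooth $0$-dimensional manifold---hence discrete, and therefore has no cluster points.

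First, for points in the relative interior $x\in C\cap\cM_u$, I would use LICQ together with the invertibility of $\nabla^2\phi(x)$ on $C$ to rewrite the inclusion defining $S(u,v)$ as the existence of $\lambda\in\R^m$ solving the smooth system
\[
\Phi(x,\lambda;u,v):=\begin{pmatrix}\nabla f(x)+\nabla^2\phi(x)\,v+\nabla c(x)\lambda\\ c(x)+u\end{pmatrix}=0.
\]
The partial Jacobian of $\Phi$ with respect to $(v,u)\in\R^n\times\R^m$ is block diagonal with blocks $\nabla^2\phi(x)$ and $I_m$, which is surjective on $C$. Thus $\Phi$ is a submersion at every zero. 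Thom's parametric transversality theorem then gives that for a.e. $(u,v)\in\epsilon(\B^m\times\B^n)$, $0$ is a regular value of the slice $\Phi(\cdot,\cdot;u,v)$; since domain and codomain both have dimension $n+m$, the preimage is a $0$-dimensional manifold, i.e. discrete. Projecting onto $x$ (uniquely determined from $\lambda$ by LICQ) then gives that $S(u,v)\cap C$ is discrete.

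Next, for $S(u,v)\cap\partial C$, I would stratify $\overline C$ into countably many relatively open strata $\{F_\alpha\}$ on which $\mathrm{span}(N_{\overline C}(\cdot))$ is locally constant. On each $F_\alpha$, Proposition \ref{prop:relation_NC_Null} reformulates the stable set condition as a KKT system with multipliers $\lambda$ for $\cM_u$ and unsigned multipliers $\mu$ for the active constraints cutting out $F_\alpha$. The analogous map $\Phi_\alpha$ again has a surjective partial derivative with respect to $(u,v)$ (contributed by the $\nabla^2\phi(x)v$ and $u$ terms), so parametric Sard applied to each stratum yields discreteness of $S(u,v)\cap F_\alpha$ for a.e. $(u,v)$. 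Since a countable union of null sets is null, the discreteness holds simultaneously on every stratum, so $S(u,v)$ is globally discrete, proving the claim.

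The main obstacle is the boundary analysis: one must ensure $\overline C$ admits a countable smooth stratification along which the normal cone structure is constant, and that the continuous extension of $[\nabla^2\phi(x)]^{-1}$ to $\cM\cap\overline C$ is compatible with Proposition \ref{prop:relation_NC_Null} on each stratum so that the KKT reformulation captures $S(u,v)$ everywhere. For the canonical examples (nonnegative orthant, polyhedra, second-order cone, PSD cone) such a stratification is automatic and the dimension count on each stratum gives the required $0$-dimensional outcome; combining with the interior case completes the argument.
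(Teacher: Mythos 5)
Your interior argument is essentially the paper's proof: the paper writes the stable-set condition as the nonlinear system $F(x,\lambda)=-(v,u)$ with $F(x,\lambda)=\bigl([\nabla^2\phi(x)]^{-1}(\nabla f(x)+\nabla c(x)\lambda),\,c(x)\bigr)$ and applies the Morse--Sard theorem plus the implicit function theorem to conclude that for almost every $(u,v)$ every solution is isolated; since the perturbation enters purely additively, your parametric-transversality formulation is the same computation in different language. The only divergence is the treatment of $\partial C$: you stratify the boundary and rerun the transversality argument on each stratum, whereas the paper's proof does not stratify and instead relies on the observation (made explicit in the discussion after the subsequent theorem) that the degeneracy of $\nabla^2\phi(x)^{-1}$ at boundary points forces, for almost every $v$, the perturbed inclusion to have no solutions on $\partial C$ at all, so that $S(u,v)\subset\cM_u\cap C$ and the interior argument suffices.
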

\begin{proof}
For any $x\in S(u,v)$, 
\[
\begin{aligned}
0\in [\nabla^2\phi(x)]^{-1}  (\nabla f_v(x)+N_{\mathcal{M}_u}(x)),
\end{aligned}
\Longleftrightarrow 
\left\{
\begin{aligned}
0\in [\nabla^2\phi(x)]^{-1}  (\nabla f(x)+N_{\mathcal{M}}(x))+v,\\
c(x)+u=0.
\end{aligned}
\right.
\]
Thus, $x\in S(u,v)$ is equivalent to the following nonlinear system in terms of $(x,\lambda)$ having a solution 
\[
\left\{
\begin{aligned}
\nabla^2\phi(x)^{-1}  (\nabla f(x)+\nabla c(x)\lambda)+v&=0\\
c(x)+u&=0.
\end{aligned}
\right.
\]
Let $F(x,\lambda):=\left[
\begin{array}{c}
   [\nabla^2\phi(x)]^{-1} (\nabla f(x)+\nabla c(x)\lambda)  \\
   c(x)  
\end{array}
\right]$. Then the nonlinear system is equivalent to $F(x,\lambda)=-[v;u]$. Choose $[u,v]$ as the regular value of $-F(x,\lambda)$, by Morse-Sard theorem, we have the regular value is dense in $\R^{m}\times\R^n$. Moreover, $\frac{\partial F(x,\lambda)}{\partial (x,\lambda)}$ is nonsingular when $(u,v)$ is a regular value of $F$. By implicit function theorem, for any solution $(\bar x,\bar\lambda)$ of $F(x,\lambda)=-[v;u]$, there is a open neighborhood of $(\bar x,\bar\lambda)$ such that there is only unique solution of the nonlinear system in this neighborhood. Therefore, there is no cluster point in the set $S(u,v)$.
\end{proof}

\begin{theorem}
Suppose $f+\delta_{\cM_u}$ is a stratifiable function for any $u\in\R^m$, where $\cM_u=\qty{x:c(x)+u=0)}$. Define $f_v(x):=f(x)+\inner{\nabla\phi(x),v}$. Then for almost all $(u,v)\in\R^m\times\R^n$, the stable set $S(u,v):=\{x:0\in \nabla^2\phi(x)^{-1} (\partial f_v(x)+N_{\mathcal{M}_u}(x)),x\in\cM_{u}\}$ has no cluster points. 
\end{theorem}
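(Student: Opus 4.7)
The plan is to reduce the stratifiable case to the smooth case of Lemma~\ref{le:perturb} stratum by stratum, and then assemble the conclusion by a countable-union of null sets. First I would invoke the stratifiability hypothesis on $f+\delta_{\cM_u}$ to obtain a locally finite Whitney $\cC^k$ stratification $\{\Sigma_\alpha\}_{\alpha\in A}$ of $\cM_u$, compatible with $\cM_u$ (so each $\Sigma_\alpha\subset\cM_u$), on which the restriction $f|_{\Sigma_\alpha}$ is smooth. Since the constraint map $c$ is smooth and $u$ enters only as a translation, such a stratification can be chosen to depend measurably on $u$, for instance by stratifying the graph $\{(x,u):c(x)+u=0\}\subset\R^n\times\R^m$.

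Next, on each stratum I would apply the projection formula of Bolte-Daniilidis-Lewis for the Clarke subdifferential of a stratifiable function: for any $x$ in the relative interior of $\Sigma_\alpha$ and any $g\in\partial f(x)$, the tangential component $P_{T_x\Sigma_\alpha}(g)$ equals the intrinsic Riemannian gradient $\nabla_{\Sigma_\alpha}f(x)$. Because $\Sigma_\alpha\subset\cM_u$ forces $N_x\cM_u\subset N_x\Sigma_\alpha$, projecting the defining inclusion $0\in\nabla^2\phi(x)^{-1}\bigl(\partial f_v(x)+N_{\cM_u}(x)\bigr)$ onto $T_x\Sigma_\alpha$ collapses it to the smooth critical-point equation $\nabla_{\Sigma_\alpha}f_v(x)=0$. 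I would then replay the argument of Lemma~\ref{le:perturb} in this smooth stratum-wise setting, applying the Morse-Sard theorem to the smooth map $F_\alpha(x,\lambda)$ that encodes the KKT system of $\min f_v$ over $\Sigma_\alpha$ under the defining constraints of $\Sigma_\alpha$ together with $c(x)+u=0$. This produces a Lebesgue-null exceptional set $N_\alpha\subset\R^m\times\R^n$ outside of which the stratum-restricted stable set $S(u,v)\cap\Sigma_\alpha$ has no cluster points in $\Sigma_\alpha$.

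Because $A$ is countable, the union $N:=\bigcup_{\alpha\in A}N_\alpha$ remains Lebesgue-null, so for almost every $(u,v)$ each $S(u,v)\cap\Sigma_\alpha$ consists of points isolated within its own stratum. The hard part will be ruling out cluster points of the full set $S(u,v)=\bigcup_\alpha S(u,v)\cap\Sigma_\alpha$ at the interface between strata: a sequence $x_k\in S(u,v)\cap\Sigma_\beta$ could in principle converge to $\bar x\in\Sigma_{\bar\alpha}$ with $\Sigma_{\bar\alpha}\subset\overline{\Sigma_\beta}$ without contradicting isolation inside either individual stratum. To close this gap I would use Whitney condition (b): after extracting a subsequence, $T_{x_k}\Sigma_\beta$ converges to a subspace containing $T_{\bar x}\Sigma_{\bar\alpha}$, so that passing to the limit in $\nabla_{\Sigma_\beta}f_v(x_k)=0$ forces a limit gradient $g\in\partial f(\bar x)$ with $g+\nabla^2\phi(\bar x)v\in N_{\bar x}\cM_u$ plus an additional incidence relation against $T_{\bar x}\Sigma_{\bar\alpha}$. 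A second Morse-Sard argument on the combined parameterized system $(F_\alpha,F_\beta)$, applied to each of the finitely many incidence pairs that can occur near $\bar x$, yields a further null set of bad $(u,v)$ to subtract. Alternatively, when the data $(f,c,\phi)$ are definable in an o-minimal structure (a setting subsumed by the stratifiable hypothesis in most examples), the set $S(u,v)$ is itself definable and hence has only finitely many connected components, so stratum-wise isolation together with definability immediately excludes cluster points.
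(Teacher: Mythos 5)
Your proposal follows essentially the same route as the paper: stratify $f+\delta_{\cM_u}$, use the Bolte--Daniilidis--Lewis projection formula $\partial f(x)\subset\nabla_{\Sigma_\alpha}f(x)+N_x\Sigma_\alpha$ to reduce the inclusion on each stratum to a smooth parameterized KKT system, replay the Morse--Sard and implicit-function argument of Lemma~\ref{le:perturb} stratum by stratum, and discard a null set of parameters for each stratum. You in fact go one step further than the paper, which works with a finite stratification, asserts that each stratum-wise set $S_i(u,v)$ has no cluster points, and leaves implicit the possibility of solutions accumulating from one stratum onto the frontier of another; your Whitney-(b) plus second Sard argument (or the o-minimal finiteness shortcut) is a sensible way to close that residual gap.
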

\begin{proof}
By definition, $S(u,v)=\{x:0\in [\nabla^2\phi(x)]^{-1}  (\partial f(x)+N_{\mathcal{M}}(x))+v,c(x)+u=0\}$. By \cite[Proposition 4]{bolte2007clarke}, $\partial f(x)\subset\nabla_{\cM_x}f(x)+N_{\cM_x}(x)$, where $\cM_x$ is the stratum containing $x$, and there exists finite strata $\cup_{i=1}^I\cM_i=\cM$. Thus, we have
\[
S(u,v)\subset\cup_{i=1}^I\qty{x:0\in \nabla^2\phi(x)^{-1}(\nabla_{\cM_i}f(x)+N_{\cM_i}(x)+v),c(x)+u=0}=:\cup_{i=1}^IS_i(u,v).
\]
Next, we prove that each $S_i(u,v)$ has no cluster point. For any $x\in S_i(u,v)$, it is equivalent to the following nonlinear system has a solution
\[
\left\{
\begin{aligned}
\nabla^2\phi(x)^{-1}(\nabla_{\cM_i} f(x)+\nabla c_i(x)\mu_i)+v&=0\\
c_i(x)+u&=0.
\end{aligned}
\right.
\]
Then, the Morse-Sard theorem for the smooth case can be applied. The remaining of the proof is similar to the proof of Lemma \ref{le:perturb}, we omit it.
\end{proof}

This theorem illustrates that introducing a random perturbation ensures that the subgradient flow of the perturbed problem almost surely possesses an isolated stable set. By Theorem \ref{thm:isolated}, the trajectory converges to a stationary point of the perturbed problem.

In the case of linear constraints, the perturbed problem generates solutions that approximate the original problem by relaxing primal feasibility and complementarity conditions. The stable set of the differential inclusion for the perturbed problem is defined as:
\[
\left\{\begin{aligned}
    -v&\in x\circ\qty(\partial f(x)+A^Ty)\\
    -u&=Ax-b.
\end{aligned}\right.
\]
Let $s$ be the slack variable, such that $s\in\partial f(x)+A^Ty$, the system can be rewritten as:
\[
\left\{\begin{aligned}
    &Ax-b=-u\\
    &s\in\partial f(x)+A^Ty\\
    &x\circ s=-v.
\end{aligned}\right.
\]
Typically, $ v $ is selected as a small negative vector, and $ s $ serves as the dual slack variable.

For nonlinear constraints, the perturbed system is expressed as:
\[
\left\{
\begin{aligned}
0\in\;&\nabla^2\phi(x)^{-1}(\partial f(x)+N_x\cM)+v\\
0=&\;c(x)+u.
\end{aligned}\right.
\]
If $ x $ lies on the boundary, $ \nabla^2 \phi(x)^{-1} $ becomes degenerate, and for almost all $ v \in \mathbb{R}^n $, the inclusion $ 0 \in \nabla^2 \phi(x)^{-1} \left( \partial f(x) + N_x \mathcal{M} \right) + v $ has no solution. Consequently, $ x $ must reside in $ \mathcal{M} \cap C $, where $ \nabla^2 \phi(x)^{-1} $ is positive definite. Define $ s(x) \in \partial f(x) + N_x \mathcal{M} $ such that $ s(x) = -\nabla^2 \phi(x) v $. In the case of conic programming, choosing $ v $ appropriately, $ s(x) $ can be ensured to lie in the dual cone. Besides the example of the nonnegative orthant cone, consider the positive definite cone and its kernel function:
\[
\phi(X)=-\log({\rm det}(X)),\; C=\mathbb{S}^d_{++}=\qty{X\in\R^{d\times d}:\;g(X)=-\det(X)<0}.
\]
Since $s(X)V=-X^{-1}VX^{-1}$, selecting $ V \in \mathbb{S}_{--}^d $ ensures $ s(X) = -\nabla^2 \phi(X) V \in \mathbb{S}_{++}^d $, satisfying the dual cone condition. The approximation error is bounded by:
$$\| s(X) \circ X \| \leq \| \text{Diag}(X) \nabla^2 \phi(X) V \| \leq \epsilon \| \text{Diag}(X) \nabla^2 \phi(X) \|.$$
Thus, the perturbed solution yields an approximate stationary point with controlled errors in primal feasibility and complementarity.

Finally, we make some remarks on the application of the stable manifold theorem. The stable manifold theorem \cite{shub2013global} is a powerful tool used to establish saddle point avoidance properties for optimization algorithms and has recently gained significant interest in the optimization community \cite{lee2016gradient,panageas2016gradient,lee2019first,panageas2019first,davis2021subgradient,hsieh2023riemannian}. These studies demonstrate that, under random initialization, the sequences generated by iterative algorithms almost surely avoid converging to strict saddle points, where the magnitude of the updating map is greater than one. However, these findings do not imply the subsequential convergence of the algorithm, such as the properties of the \(\omega\)-limit set. In contrast, our work focuses on investigating the limit point behavior, thus distinguishing our results from those centered on saddle point avoidance.

\section{Iterative algorithms}
In this section, we propose the iterative algorithms such that the generated sequence is the interpolated process of the continuous trajectory. Thus, the convergence properties of the iterative algorithms can be obtained from the continuous trajectory's properties.

\subsection{Riemannian Hessian barrier methods}
The following algorithm is the a discretization of the continuous differential inclusion \eqref{eq:DI_manifold}:
\begin{equation}
\label{eq:alg-RHBA}
x_{k+1}={\rm R}_{x_k}(-\eta_k{\rm P}_{T_{x_k}\cM}\nabla^2\phi(x_k)^{-1}(d_k+\xi_k)),\;x_0\in\cM\cap C,
\end{equation}
where $d_k\in\partial^{\delta_k}f(x_k)$, $\xi_k$ is the stochastic noise, $ {\rm P}_{T_{x_k} \mathcal{M}} $ is the projection onto the tangent space $ T_{x_k} \mathcal{M}$, $ \eta_k $ is the step size, and ${\rm R}$ is a retraction map. The update direction is directly inherent from the right hand side of the DI. This update direction is derived from the continuous DI. To ensure all iterates $ x_k \in \mathcal{M} \cap C $, a retraction map $ {\rm R}_x $ is used, making this an interior point method where iterates stay in the relative interior of the feasible set. To ensure the iterative sequence approximates the trajectory of the continuous DI, we impose the following assumptions:
\begin{assumption}
\label{assumption:alg}
    \begin{enumerate}
        \item {$\phi$ is a Legendre kernel function over $\R^n$}, $\mu$-strongly convex, and $\nabla\phi$ is differentiable almost everywhere. Here $\mu>0$.
        \item The sequences $\{x_k\}$ and $\{\nabla^2\phi(x_k)^{-1}\}$ are uniformly bounded.
        \item The stepsize $\{\eta_k\}$ satisfies $\sum_{k=0}^\infty\eta_k=\infty$ and $\eta_k=o\qty(\frac{1}{\log k})$.
        \item The noise is a martingale difference noise sequence $\{\xi_k\}$,  i.e. $\E[\xi_{k+1}|\cF_k]=0$, and uniformly bounded. 
               
        \item The set-valued mapping $\mathcal{H}$ has a closed graph. Additionally, for any unbounded increasing sequence $\{k_j\}$ such that $\{x_{k_j}\}$ converges to $\bar x$, it holds that 
        \begin{equation}
            \lim_{N\rightarrow\infty}{\rm dist}\left(\frac{1}{N}\sum_{j=1}^Nd_{k_j},\mathcal{H}(\bar x)\right)=0.
        \end{equation}
        \item For any $\rho\in(0,\infty)$,
        \[
        \lim_{\eta\to0}\sup_{x\in\cM\cap C\cap\mathbb{B}_\rho}\sup_{d\in\mathbb{B}_{\rho}}\frac{\norm{{\rm R}_x(-\eta{\rm P}_{T_x\cM}\nabla^2\phi(x)^{-1}d)-\qty(x-\eta{\rm P}_{T_x\cM}\nabla^2\phi(x)^{-1}d)}}{\eta}=0.
        \]
    \end{enumerate}
\end{assumption}
We first make some comments on Assumption \ref{assumption:alg}.1-5, which are standard assumptions in stochastic approximation approach applied in nonsmooth nonconvex optimization. For the novel assumption in our algorithm is Assumption \ref{assumption:alg}.6. We make a separate discussion later.
\begin{remark}
    \label{rmk:assumption_DI}
    \begin{enumerate}
        \item If the Legendre function $\phi$ is supercoecive (implied by strong convexity), i.e. $\lim_{\|u\|\rightarrow\infty}\frac{\phi(u)}{\|u\|}=\infty$, then by \cite[Theorem 26.5,\,Corollary 13.3.1]{rockafellar1997convex}, $\phi^*\in\cC^1(\R^n)$ is strictly convex, and $(\nabla\phi)^{-1}=\nabla\phi^*$. Once restricted on a compact set, it is not restrictive that $\phi$ is $\mu$-strongly convex. For example, consider entropy function $\phi(x)=x\log(x)-x$ over simplex $\{x\in\R^n:xe=1,x\geq0\}$, which is a compact set. Although $\phi$ is not strongly convex over entire $\R^n$, $\phi$ is $1$-strongly convex.
        \item The uniform boundedness assumption is directly assumed in many existing works (e.g. \cite{davis2020stochastic,castera2021inertial,xiao2023adam}) on stochastic subgradient methods, since the compact condition is essential to apply the technique of stochastic approximation \cite{benaim2005stochastic,borkar2009stochastic} to establish the convergence. Recent works \cite{josz2024global,xiao2023convergence} prove the global stability of the iterates of certain subgradient methods for coercive definable function with random reshuffling technique. Here, we also directly to assume the boundedness condition on $\{x_k\}$. 
        \item The bounded condition for $\nabla^2\phi(x_k)^{-1}$ is mild given the boundedness of $\{x_k\}$. Since $\phi(x)$ diverges at the domain’s boundary, $\nabla^2 \phi(x)^{-1}$ is generally well-controlled and even flattens near the boundary. For instance, with the entropy function $\phi(x) = x \log x - x$ over $C = \{x : x \geq 0\}$, we have $\nabla^2 \phi(x)^{-1} = {\rm Diag}(x)$, which remains bounded on compact sets.

        \item 
        Uniformly bounded martingale difference noise is a standard assumption in stochastic optimization, as noted in \cite{castera2021inertial,xiao2023adam}, especially for finite-sum objectives with noise from random reshuffling. With stepsizes chosen in Assumption \ref{assumption:alg}.3, \cite{benaim2005stochastic} shows that:
        \begin{equation}
            \lim_{s\rightarrow\infty}\sup_{s\leq i\leq\Lambda_\eta(\lambda_\eta(s)+T)}\norm{\sum_{k=s}^i\eta_k\xi_k}=0,
            \label{Eq:noise_cond}
        \end{equation}   
        indicating that accumulated noise over fixed time windows is manageable.
        \item Assumption 1.5 requires $d_k$ to approximate the set-valued mapping $\mathcal{H}(x_k)$. This is a mild condition, satisfied, for example, if $d_k \in \mathcal{H}^{\delta_k}(x_k)$ with $\delta_k \to 0$.
    \end{enumerate}
\end{remark}

Now, we make some discussions on Assumption \ref{assumption:alg}.6. In stochastic approximation theory, the step sizes $\eta_k$ must satisfy two key conditions: $\sum_{k}\eta_k=\infty$ and $\lim_{k\to\infty}\eta_k=0$, in which the unsummable condition can ensure the sequence can travel arbitrarily far. Moreover, the discrete sequence should approximate the continuous trajectory sufficiently well. Specifically, the scheme should satisfy $x_{k+1}=x_k-\eta_k(v_k+\delta_k)$, where $v_k$ is the update direction and $\delta_k \to 0$ as $k \to \infty$. However, in Riemannian optimization, especially when iterates approach the boundary of the feasible set, these conditions can be challenging to satisfy. The retraction map $R$, which enforces feasibility within the manifold $\mathcal{M} \cap C$, may disrupt the accurate approximation of the continuous trajectory.

To illustrate it more clearly, consider optimization over the interval $[0,1] \subset \mathbb{R}$ with a constant update direction $+1$ and a Euclidean metric, where $\nabla^2 \phi(x) = \text{Id}$. A typical Riemannian update gives:
\[
x_{k+1}={\rm R}_{x_k}(-\eta_k\cdot1)=x_k-\eta_k\qty(1+\delta_k),\text{ where $\lim_{k\to\infty}\delta_k=0$.}
\]
However, such Retraction never exists. Note that 
\[
x_{k}=x_0-\sum_{i=0}^{k-1}\eta_i(1+\delta_i)\to-\infty,
\]
since $\sum_{k} \eta_k = \infty$, $x_k \to -\infty$, which violates the feasibility constraint $x_k \in (0,1)$. No retraction can simultaneously maintain both the accurate approximation of the continuous trajectory and strict feasibility in this scenario. 
 
Thus, Assumption \ref{assumption:alg}.6 basically requires the existence of the retraction to ensure both the well-approximation to the continuous trajectory and strict feasible. The magic of our proposed algorithm lies in its ability to ensure the existence of a suitable retraction map, thanks to the metric induced by the Hessian of carefully chosen barrier function. As iterates $x_k$ near the boundary, the inverse Hessian $\nabla^2 \phi(x_k)^{-1}$ shrinks the update direction $v_k = {\rm P}_{T_{x_k} \mathcal{M}} \nabla^2 \phi(x_k)^{-1} (d_k + \xi_k)$, preventing the iterates from leaving the feasible set, and thus reducing the need for sharp step size adjustments.

To illustrate the subtlety of our approach, we consider the case of an affine manifold with the retraction map chosen as the identity map. We introduce self-concordant logarithmically homogeneous barrier functions, which ensure that our proposed algorithm effectively approximates the continuous trajectory while keeping all iterates within the relative interior of the feasible set.

Let \(\bar{K} \subset \mathbb{R}^n\) be a \textit{regular cone}: \(\bar{K}\) is closed convex with nonempty interior \(K = \mathrm{int}(\bar{K})\), and pointed (i.e., \(\bar{K} \cap (-\bar{K}) = \{0\}\)). Any such cone admits a self-concordant logarithmically homogeneous barrier kernel \(\phi(x)\) with finite parameter value \(\theta\).
\begin{definition}
A function \(\phi : \bar{K} \to (-\infty, \infty]\) with \(\mathrm{dom} \, \phi = K\) is called a \textit{\(\theta\)-logarithmically homogeneous self-concordant barrier} (\(\theta\)-LHSCB) for the cone \(\bar{K}\) if:

(a) \(\phi\) is a \(\theta\)-self-concordant barrier for \(\bar{K}\), i.e., for all \(x \in K\) and \(u \in \mathbb{R}^n\)
\[
|D^3 \phi(x)[u, u, u]| \leq 2D^2 \phi(x)[u, u]^{3/2}, \quad \text{and} \quad \sup_{u \in \mathbb{E}} |2D \phi(x)[u] - D^2 \phi(x)[u, u]| \leq \theta.
\]

(b) \(\phi\) is \textit{logarithmically homogeneous}:
\[
\phi(tx) = \phi(x) - \theta \ln(t) \quad \forall x \in K, \, t > 0.
\]
\end{definition}

We now present a proposition that guarantees the iterates remain in the relative interior of the feasible set under suitable step size constraints.
\begin{proposition}
\label{prop:stepsize_threshhold}
Suppose $\sup_{k}\qty{\norm{d_k}}\leq M_d$, $\sup_{k}\norm{\xi_k}\leq M_\xi$ and $\sup_{k}\norm{\nabla^2\phi(x_k)^{-1}}\leq\hat M$. Suppose $\cM$ is affine, $\bar C$ is a regular cone, and $\phi$ is a $\theta$-logarithmically homogeneous self-concordant barrier for cone $\bar C$, then as long as $x_k\in\cM\cap C$, $\eta_k\in\left[0,\frac{1}{\hat M\qty(M_d+M_\xi)^2}\right]$, we have 
\[
x_{k+1}={\rm R}_{x_k}(-\eta_k{\rm P}_{T_{x_k}\cM}\nabla^2\phi(x_k)^{-1}(d_k+\xi_k))=x_k-\eta_k{\rm P}_{T_{x_k}\cM}\nabla^2\phi(x_k)^{-1}(d_k+\xi_k)\in\cM\cap C.
\]
\end{proposition}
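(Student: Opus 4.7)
The plan is to separate the claim into two parts: first, that the stated equality between the retracted point and the Euclidean step holds, and second, that the resulting iterate actually lies in $\cM\cap C$. Since $\cM$ is affine with constant tangent space $L_0 := \cM - x_0$, the direction $-\eta_k {\rm P}_{T_{x_k}\cM}\nabla^2\phi(x_k)^{-1}(d_k+\xi_k)$ lies in $L_0$ by construction of the Riemannian projection, so the Euclidean step $x_k - \eta_k v_k$ automatically stays on $\cM$ and serves as a legitimate retraction. The real content is therefore to show that $x_{k+1}\in C$, i.e., that the step remains in the interior of the cone.

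For this, the key tool I would invoke is the Dikin ellipsoid property of self-concordant barriers: for any $x\in C$, the open ellipsoid $\{x+h : \|h\|_x < 1\}$ is contained in $C$, where $\|h\|_x^2 := h^\top \nabla^2\phi(x) h$. Applying this at $x_k$ with $h := -\eta_k v_k$ and $v_k := {\rm P}_{T_{x_k}\cM}\nabla^2\phi(x_k)^{-1}(d_k+\xi_k)$, it suffices to control $\|\eta_k v_k\|_{x_k}$. Since ${\rm P}_{T_{x_k}\cM}$ is the orthogonal projection with respect to the inner product $\langle\cdot,\nabla^2\phi(x_k)\cdot\rangle$, it is non-expansive in the induced norm, which yields
\[
\|v_k\|_{x_k}^2 \;\leq\; \|\nabla^2\phi(x_k)^{-1}(d_k+\xi_k)\|_{x_k}^2 \;=\; (d_k+\xi_k)^\top \nabla^2\phi(x_k)^{-1}(d_k+\xi_k) \;\leq\; \hat M (M_d + M_\xi)^2.
\]
Combining with the cap $\eta_k \leq \tfrac{1}{\hat M(M_d+M_\xi)^2}$ gives $\|\eta_k v_k\|_{x_k}^2 \leq \eta_k \cdot [\eta_k \hat M(M_d+M_\xi)^2] \leq \eta_k$, hence $\|\eta_k v_k\|_{x_k} \leq \sqrt{\eta_k}$, which is strictly less than one in the regime of interest. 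The Dikin property then delivers $x_{k+1}\in C$, concluding both parts.

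The main subtlety will be the endpoint behavior when $\eta_k$ saturates the cap: one needs strict interiority of $x_{k+1}$ rather than mere closure-membership. The quadratic factor $(M_d+M_\xi)^2$ in the denominator is precisely what provides the necessary margin, since the resulting bound $\|\eta_k v_k\|_{x_k} \leq \sqrt{\eta_k}$ is strictly less than one whenever the cap itself is strictly less than one, which is the practical regime. Two incidental remarks: only the self-concordance half of the $\theta$-LHSCB assumption is invoked here, with logarithmic homogeneity presumably reserved for downstream analyses; and the hypothesis $\sup_k \|\nabla^2\phi(x_k)^{-1}\| \leq \hat M$ is not restrictive in practice, since for the standard barriers (entropy, log-det, etc.) the inverse Hessian remains well-controlled on compact subsets of $\cM\cap C$ even near the boundary.
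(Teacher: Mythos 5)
Your proof follows essentially the same route as the paper's: the affine structure of $\cM$ makes the Euclidean step a valid retraction staying on $\cM$, the non-expansiveness of ${\rm P}_{T_{x_k}\cM}$ in the local norm bounds $\|x_{k+1}-x_k\|_{x_k}$ via $(d_k+\xi_k)^\top\nabla^2\phi(x_k)^{-1}(d_k+\xi_k)\leq \hat M(M_d+M_\xi)^2$, and the Dikin ellipsoid $\mathcal{W}(x_k,1)\subset C$ of the self-concordant barrier closes the argument. If anything, you track the norm-versus-norm-squared bookkeeping more carefully than the paper does, and your observation that the endpoint of the step-size cap only yields $\|\eta_k v_k\|_{x_k}\leq\sqrt{\eta_k}$ (hence requires $\eta_k<1$ to land strictly inside the Dikin ellipsoid) is a legitimate caveat that applies equally to the paper's own computation.
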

\begin{proof}
Since $x_k\in\cM$, $\mathcal{M}$ is an affine manifold, and $P_{T_{x_k} \mathcal{M}} = P_{\mathcal{M}}$, it follows that $x_{k+1} \in \mathcal{M}$. We need only verify that $x_{k+1} \in C$. Consider the norm in the local metric:
\[
\begin{aligned}
&\norm{x_{k+1}-x_k}_{x_k}=\norm{-\eta_k{\rm P}_{T_{x_k}\cM}\nabla^2\phi(x_k)^{-1}(d_k+\xi_k)}_{x_k}\leq\eta_k\norm{ \nabla^2\phi(x_k)^{-1}(d_k+\xi_k)}_{x_k}\\
=&\eta_k\inner{\nabla^2\phi(x_k)^{-1}(d_k+\xi_k),(d_k+\xi_k)}\leq\eta_k \hat M\qty(M_d+M_\xi)^2\leq1.
\end{aligned}
\]
Since $\phi$ is a $\theta$-logarithmically homogeneous self-concordant barrier for cone $\bar C$, we know from \cite[Theorem 5.1.5]{nesterov2018lectures} that Dikin ellipsoid $\mathcal{W}(x,1)\subset C$. Thus, $x_{k+1}\in C$. Thus, $x_{k+1}\in\cM\cap C$.
\end{proof}
The proposition establishes a safe step size threshold $\bar{\eta}$. This allows us to select step sizes such that $\sup_k \{\eta_k\} \leq \bar{\eta}$ and $\sum_k \eta_k = \infty$, ensuring feasibility while permitting sufficient exploration of the space for convergence. To guarantee convergence of the iterates, additional assumptions on $f$ are needed. 
\begin{assumption}    \label{assumption:Lyapunov_RHBA}
    \begin{enumerate}
        \item $f$ is lower bounded, i.e. $\liminf_{x\in\R^n}f(x)>-\infty$. Moreover, $f$ is path-differentiable.
        \item The critical value set $\{f(x):\;0\in \nabla^2\phi(x)^{-1}(\partial f(x)+N_{x}\cM)\}$ has empty interior in $\R$.
    \end{enumerate}
\end{assumption}
Assumption \ref{assumption:Lyapunov_RHBA} is a specific instance of Assumption \ref{assumption_Sard_Lyapunov}. In particular, $f$ is the Lyapunov function and Assumption \ref{assumption:Lyapunov_RHBA}(2) requires a weak Sard condition for \(f\). Recall that the classical Sard theorem for differentiable functions in \(\mathbb{R}^n\) states that the set of critical values has Lebesgue measure zero.  
This assumption is not restrictive. Suppose conditions in Proposition \ref{prop:relation_NC_Null} holds, the critical values set becomes
\[
\{f(x):\;0\in\partial f(x)+N_{x}\cM+{\rm span}(N_{\overline{C}}(x))\}
\]
Suppose $f$ and $\overline{C}$ are stratifiable, and $\overline{C}=\cup_{i=1}^IS_i$, where every $S_i$ are smooth disjoint manifolds. Then, we have
\[
\{f(x):\;0\in\partial f(x)+N_{x}\cM+{\rm span}(N_{\overline{C}}(x))\}\subset\cup_{i=1}^I\{f(x):\;0\in\partial f(x)+N_{x}\cM+N_{x}S_i\}.
\]
By \cite{bolte2007clarke}, Sard theorem holds for each $\{f(x):\;0\in\partial f(x)+N_{x}\cM+N_{x}S_i\}$, thus Assumption \ref{assumption:Lyapunov_RHBA}.2 holds. 

Note that \eqref{assumption:alg} can be rewritten as the form of general subgradient methods:
\begin{equation}
\label{eq:alg-RHBA-reform}
\begin{aligned}
x_{k+1}=&x_k-\eta_k{\rm P}_{T_{x_k}\cM}\nabla^2\phi(x_k)^{-1}(d_k+\xi_k)+ \eta_k\Delta_k\\
=&x_k-\eta_k\qty({\rm P}_{T_{x_k}\cM}\nabla^2\phi(x_k)^{-1}d_k+{\rm P}_{T_{x_k}\cM}\nabla^2\phi(x_k)^{-1}\xi_k)+\eta_k\Delta_k,\\
=&x_k=-\eta_k(v_k+\tilde\xi_k).
\end{aligned}
\end{equation}
where 
\[
\Delta_k=\frac{{\rm R}_{x_k}(-\eta_k{\rm P}_{T_{x_k}\cM}\nabla^2\phi(x_k)^{-1}(d_k+\xi_k))-\qty(x_k-\eta_k{\rm P}_{T_{x_k}\cM}\nabla^2\phi(x_k)^{-1}(d_k+\xi_k))}{\eta_k},
\]
and $v_k={\rm P}_{T_{x_k}\cM}\nabla^2\phi(x_k)^{-1}d_k+\Delta_k$.
Thus, the convergence of the algorithm can be established using Theorem 1, provided that all necessary assumptions are verified. We begin by checking the noise condition.

\begin{lemma}
\label{le:noise_ass}
Let $\tilde\xi_k:={\rm P}_{T_{x_k}\cM}\nabla^2\phi(x_k)^{-1}\xi_k$, then $\E[\tilde\xi_k|\cF_{k-1}]=0$, and $\{\tilde\xi_k\}$ is uniformly bounded.
\end{lemma}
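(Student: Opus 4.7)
The plan is to treat the two claims separately. Both rely on the same structural observation: the operator $A_k := {\rm P}_{T_{x_k}\cM}\nabla^2\phi(x_k)^{-1}$ is $\cF_{k-1}$-measurable because the iterate $x_k$ is determined by $x_0, \xi_0, \ldots, \xi_{k-1}$ through the recursion \eqref{eq:alg-RHBA}. Thus $A_k$ may be pulled outside the conditional expectation.

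For the martingale property, I would write
\[
\E[\tilde\xi_k\mid\cF_{k-1}] = \E[A_k \xi_k\mid\cF_{k-1}] = A_k\,\E[\xi_k\mid\cF_{k-1}] = 0,
\]
using the standing noise assumption $\E[\xi_k\mid\cF_{k-1}] = 0$ from Assumption~\ref{assumption:alg}.4. This gives the first conclusion immediately.

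For uniform boundedness, the task is to estimate the Euclidean operator norm of $A_k$ along the iterates. First, by Assumption~\ref{assumption:alg}.2 the iterates $\{x_k\}$ and the inverse Hessians $\{\nabla^2\phi(x_k)^{-1}\}$ are uniformly bounded. Second, by the discussion following Proposition~\ref{prop:equiv_stable} and the extension hypothesis stated in the Remark at the end of Section~2.3, the operator ${\rm P}_{T_x\cM}\nabla^2\phi(x)^{-1}$ extends continuously to $\cM\cap\overline C$; combined with LICQ on $\cM$ (so that $A_xH(x)^{-1}A_x^T$ is invertible with continuous inverse), this continuous map is bounded on any compact set containing $\{x_k\}$. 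Hence there is a constant $M_A>0$, independent of $k$, with $\|A_k\|\le M_A$. Together with the uniform bound $\|\xi_k\|\le M_\xi$ from Assumption~\ref{assumption:alg}.4, I conclude $\|\tilde\xi_k\| \le M_A M_\xi$, which gives the second claim.

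The only potentially delicate step is verifying uniform boundedness of $\|A_k\|$ in the Euclidean norm, since the Riemannian projection ${\rm P}_{T_x\cM}$ is not contractive with respect to the Euclidean inner product in general. However, given the compactness of the iterate trajectory and the continuous-extension hypothesis for ${\rm P}_x\nabla^2\phi(x)^{-1}$ that the paper has standing throughout, this reduces to an application of the extreme value theorem; no subtle estimate is needed.
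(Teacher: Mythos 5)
Your proof is correct, but the boundedness argument takes a genuinely different route from the paper's. The paper never tries to bound the Euclidean operator norm of ${\rm P}_{T_{x_k}\cM}\nabla^2\phi(x_k)^{-1}$ directly. Instead it measures everything in the local norm $\norm{\cdot}_{x_k}$, in which the Riemannian projection \emph{is} non-expansive (it is an orthogonal projection for $\inner{\cdot,\cdot}_{x_k}$), so that
$\norm{\tilde\xi_k}_{x_k}^2\le\norm{\nabla^2\phi(x_k)^{-1}\xi_k}_{x_k}^2=\inner{\nabla^2\phi(x_k)^{-1}\xi_k,\xi_k}\le\hat M M_\xi^2$,
and then converts back to the Euclidean norm via the $\mu$-strong convexity of $\phi$ from Assumption~\ref{assumption:alg}.1, obtaining the explicit bound $\norm{\tilde\xi_k}^2\le\hat M M_\xi^2/\mu$. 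Your route bounds $\|A_k\|$ by continuity of the extended map ${\rm P}_{T_x\cM}\nabla^2\phi(x)^{-1}$ on $\cM\cap\overline C$ plus compactness of the closure of the iterate set; this is valid given the standing extension hypothesis of the Remark in Section~2.3, but it leans on that hypothesis and on an extreme value argument, and yields no explicit constant. What each buys: yours does not use strong convexity of $\phi$; the paper's is self-contained, quantitative, and sidesteps exactly the "delicate step" you flag at the end — the non-contractivity of ${\rm P}_{T_x\cM}$ in the Euclidean inner product — by working in the metric in which the projection is contractive. The martingale step is the same in both (the paper dismisses it as immediate from the definition; your measurability argument is the careful version of that).
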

\begin{proof}
$\E[\tilde\xi_k|\cF_{k-1}]=0$ is directly from definition. Moreover, it holds that
\[
\norm{{\rm P}_{T_{x_k}\cM}\nabla^2\phi(x_k)^{-1}\xi_k}_x^2\leq\norm{\nabla^2\phi(x_k)^{-1}\xi_k}_x^2=\inner{\nabla^2\phi(x_k)^{-1}\xi_k,\xi_k}\leq \hat MM_\xi^2<\infty.
\]
Thus, we have
\[
\begin{aligned}
\norm{{\rm P}_{T_{x_k}\cM}\nabla^2\phi(x_k)^{-1}\xi_k}^2\leq\frac{1}{\mu}\norm{{\rm P}_{T_{x_k}\cM}\nabla^2\phi(x_k)^{-1}\xi_k}_x^2\leq\frac{\hat MM_\xi^2}{\mu}<\infty.
\end{aligned}
\]
This completes the proof.
\end{proof}

Next, we verify the set-valued approximation condition. 

\begin{lemma}
\label{le:d_ass}
Suppose Assumption \ref{assumption:alg} holds. Then 
for any increasing sequence $\{k_j\}$ such that $\{x_{k_j}\}$ converges to $\bar x$, it holds that 
    \[  \lim_{N\rightarrow\infty}\dist\left(\frac{1}{N}\sum_{j=1}^Nv_{k_j},{\rm P}_{T_{\bar x}\cM}\nabla^2\phi(\bar x)^{-1}\partial f(\bar x)  \right)=0.
    \]
\end{lemma}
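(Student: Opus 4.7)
The plan is to isolate three sources of error in the Cesaro average of $v_{k_j}$: the retraction slack $\Delta_{k_j}$, the mismatch between the operator $A(x):={\rm P}_{T_x\cM}\nabla^2\phi(x)^{-1}$ evaluated along $\{x_{k_j}\}$ and at its limit $\bar x$, and finally the set-valued convergence of the averaged subgradients supplied by Assumption~\ref{assumption:alg}.5. Writing the reformulation of the scheme gives
$$\frac{1}{N}\sum_{j=1}^N v_{k_j} \;=\; \frac{1}{N}\sum_{j=1}^N A(x_{k_j})\,d_{k_j} \;\pm\; \frac{1}{N}\sum_{j=1}^N \Delta_{k_j},$$
and I would estimate each piece separately.

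First, I use Assumption~\ref{assumption:alg}.6 to kill the retraction slack: since $\{d_k+\xi_k\}$ is uniformly bounded (by Assumptions~\ref{assumption:alg}.2 and \ref{assumption:alg}.4) and $\eta_k\to 0$ (Assumption~\ref{assumption:alg}.3), the norm $\|\Delta_{k_j}\|$ is dominated by the supremum in Assumption~\ref{assumption:alg}.6 evaluated at $\eta=\eta_{k_j}$ for $\rho$ chosen large enough to contain $\{x_k\}$ and $\{d_k+\xi_k\}$; this supremum tends to zero as $\eta_{k_j}\to 0$, and the Cesaro mean of a null sequence is itself null.

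Next, I add and subtract $A(\bar x)d_{k_j}$ to obtain
$$\frac{1}{N}\sum_{j=1}^N A(x_{k_j})d_{k_j} \;=\; \frac{1}{N}\sum_{j=1}^N [A(x_{k_j})-A(\bar x)]d_{k_j} \;+\; A(\bar x)\!\left(\frac{1}{N}\sum_{j=1}^N d_{k_j}\right).$$
The operator $A$ is continuous on $\cM\cap C$ because $\nabla^2\phi$ is of class $\cC^2$ there and LICQ makes $T_x\cM$ vary smoothly; this continuity extends to $\cM\cap\overline C$ by the standing hypothesis following Proposition~\ref{prop:equiv_stable}. Combined with $x_{k_j}\to\bar x$ and uniform boundedness of $\{d_{k_j}\}$, each summand in the first piece vanishes, and one more application of the Cesaro lemma finishes it. For the second piece, Assumption~\ref{assumption:alg}.5 yields $\dist(\bar d_N,\partial f(\bar x))\to 0$ where $\bar d_N := \frac{1}{N}\sum_{j=1}^N d_{k_j}$; picking $p_N\in\partial f(\bar x)$ attaining this distance (possible since $\partial f(\bar x)$ is compact convex) and using linearity and boundedness of $A(\bar x)$ gives
$$\dist\bigl(A(\bar x)\bar d_N,\; A(\bar x)\partial f(\bar x)\bigr) \;\leq\; \|A(\bar x)\|\cdot\|\bar d_N - p_N\| \;\longrightarrow\; 0.$$
Assembling the three pieces through the triangle inequality for distance to a set yields the claim.

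The main obstacle I anticipate is the continuity of $A$ at boundary points $\bar x\in\cM\cap\partial C$, where $\nabla^2\phi(x)^{-1}$ may itself degenerate; the whole argument relies crucially on the standing extension hypothesis that ${\rm P}_x\nabla^2\phi(x)^{-1}$ is continuous on $\cM\cap\overline C$. A secondary subtlety is that Assumption~\ref{assumption:alg}.5 delivers only a set-valued statement about the Cesaro average of $\{d_{k_j}\}$ rather than any pointwise convergence, so the vanishing of $[A(x_{k_j})-A(\bar x)]d_{k_j}$ must be obtained purely from uniform boundedness of $\{d_k\}$ rather than from any distance estimate on individual $d_{k_j}$.
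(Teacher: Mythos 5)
Your proof is correct, and it shares the overall skeleton of the paper's argument (peel off the retraction slack $\Delta_{k_j}$ via Assumption~\ref{assumption:alg}.6, use continuity of $A(x)={\rm P}_{T_x\cM}\nabla^2\phi(x)^{-1}$, then deal with the subgradients), but the final step takes a genuinely different route. The paper first pulls the distance inside the Ces\`aro sum using convexity of the target set ${\rm P}_{T_{\bar x}\cM}\nabla^2\phi(\bar x)^{-1}\partial f(\bar x)$, and then drives each individual term $\dist\bigl(v_{k_j},\,\cdot\,\bigr)$ to zero by combining continuity of $A$ with \emph{outer semicontinuity of $\partial f$}; Assumption~\ref{assumption:alg}.5 is never actually invoked there. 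You instead keep the average intact, write $\frac1N\sum_j A(x_{k_j})d_{k_j}=A(\bar x)\bar d_N+\frac1N\sum_j[A(x_{k_j})-A(\bar x)]d_{k_j}$, kill the commutator term by continuity plus uniform boundedness plus the Ces\`aro lemma, and let Assumption~\ref{assumption:alg}.5 carry the weight on $\bar d_N$. Your version is the more faithful verification of what Theorem~\ref{convergence-thm-func-val} actually demands (convergence of the \emph{averaged} directions to $\mathcal{H}(\bar x)$), and it remains valid even if the individual $d_{k_j}$ are not close to $\partial f(x_{k_j})$ so long as their averages behave; the paper's version buys independence from the averaging hypothesis at the price of requiring each $d_{k_j}$ to be an (approximate) subgradient at $x_{k_j}$ together with outer semicontinuity. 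You are also more explicit than the paper about the two delicate points: the boundary continuity of $A$ (which both proofs ultimately borrow from the standing extension hypothesis after Proposition~\ref{prop:equiv_stable}) and the fact that the displayed chain in the paper silently absorbs the $\Delta_{k_j}$ and $\partial^{\delta_k}$ inexactness.
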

\begin{proof}
Recall that $v_k={\rm P}_{T_{x_k}\cM}\nabla^2\phi(x_k)^{-1}d_k+\Delta_k$. By Assumption \ref{assumption:alg}.6, we have $\lim_{k\to\infty}\norm{\Delta_k}=0$. Since $\nabla^2\phi^{-1}$ is continuous, ${\rm P}_{T_{x}\cM}\nabla^2\phi(x)^{-1}$ is a continuous map. Note that $\partial f$ is outer-semicontinuous, and ${\rm P}_{T_{\bar x}\cM}\nabla^2\phi(\bar x)^{-1}\partial f(\bar x)$ is a convex set, thus 
\[
\begin{aligned}
&\dist\left(\frac{1}{N}\sum_{j=1}^Nv_{k_j},{\rm P}_{T_{\bar x}\cM}\nabla^2\phi(\bar x)^{-1}\partial f(\bar x)  \right)\leq\frac{1}{N}\sum_{j=1}^N \dist\left(v_{k_j},{\rm P}_{T_{\bar x}\cM}\nabla^2\phi(\bar x)^{-1}\partial f(\bar x)  \right)\\
\leq&\frac{1}{N}\sum_{j=1}^N \dist\left({\rm P}_{T_{ x_{k_j}}\cM}\nabla^2\phi( x_{k_j})^{-1}\partial f(\bar x),{\rm P}_{T_{\bar x}\cM}\nabla^2\phi(\bar x)^{-1}\partial f(\bar x)  \right)\\
&+\frac{1}{N}\sum_{j=1}^N \dist\left({\rm P}_{T_{ x_{k_j}}\cM}\nabla^2\phi( x_{k_j})^{-1}\partial f(x_{k_j}),{\rm P}_{T_{ x_{k_j}}\cM}\nabla^2\phi( x_{k_j})^{-1}\partial f(\bar x)  \right)\longrightarrow0.
\end{aligned}
\]
This completes the proof.
\end{proof}

\begin{proposition}
\label{prop:Lyapunov_RHBA}
Suppose Assumptions \ref{assumption:alg} and \ref{assumption:Lyapunov_RHBA} hold. Then $f$ is a Lyapunov function for the differential inclusion \eqref{eq:DI_manifold} with the stable set $\left\{x\in\R^n:\;0\in{\rm P}_{T_x\cM}(\nabla^2\phi(x)^{-1}\partial f(x))\right\}$.
\end{proposition}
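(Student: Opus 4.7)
The plan is to verify both conditions in Definition \ref{Defin_Lyapunov_function} with $\Psi=f$ and $\cH(x)={\rm P}_{T_x\cM}\nabla^2\phi(x)^{-1}\partial f(x)$. The first (monotone non-increase) reduces to establishing the chain-rule identity
\[
\tfrac{d}{dt}f(x(t)) \;=\; -\|\dot x(t)\|_{x(t)}^2 \;\leq\; 0
\]
along every trajectory, which I would obtain by combining path-differentiability of $f$ (Assumption \ref{assumption:Lyapunov_RHBA}(1)) with the KKT characterization of $\dot x$ already derived in the proof of Proposition \ref{prop:DI_equiv}.

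Concretely, for a.e.\ $t$ the inclusion $\dot x(t)\in -{\rm P}_{T_{x(t)}\cM}\nabla^2\phi(x(t))^{-1}\partial f(x(t))$ furnishes a measurable selection $d(t)\in\partial f(x(t))$ with $\dot x(t)=-{\rm P}_{T_{x(t)}\cM}\nabla^2\phi(x(t))^{-1}d(t)$. Applying the Lagrangian optimality condition for the strongly convex problem \eqref{eq:equiv-prob}, there exists $\lambda(t)$ with $d(t)+\nabla^2\phi(x(t))\dot x(t)\in N_{x(t)}\cM$; pairing with $\dot x(t)\in T_{x(t)}\cM$ cancels the normal component and yields $\langle d(t),\dot x(t)\rangle=-\|\dot x(t)\|_{x(t)}^2$. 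Path-differentiability (Definition \ref{def:path-differentiable}) then legitimizes the chain rule $\tfrac{d}{dt}f(x(t))=\langle v,\dot x(t)\rangle$ for every $v\in\partial f(x(t))$, and specializing $v=d(t)$ gives the displayed identity. Integration yields non-increase of $f\circ x$, which is condition (1) of Definition \ref{Defin_Lyapunov_function}.

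For condition (2), I would argue by contradiction: if $x(0)\notin\cS$ but $f(x(t))\geq f(x(0))$ for every $t\geq 0$, the already-proved non-increase forces $f(x(t))\equiv f(x(0))$ on $[0,\infty)$, so $\|\dot x(t)\|_{x(t)}^2=0$ for a.e.\ $t$. The $\mu$-strong convexity of $\phi$ (Assumption \ref{assumption:alg}(1)) gives $\|\dot x\|_{x}^2\geq \mu\|\dot x\|^2$, so $\dot x(t)=0$ a.e.; absolute continuity of the trajectory then forces $x(t)\equiv x(0)$, and substituting into the differential inclusion at $t=0$ exhibits $0\in {\rm P}_{T_{x(0)}\cM}\nabla^2\phi(x(0))^{-1}\partial f(x(0))$, i.e.\ $x(0)\in\cS$, contradicting the assumption.

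The main technical subtlety is not the algebra but the justification that the same scalar $\langle d(t),\dot x(t)\rangle$ simultaneously equals the chain-rule derivative of $f\circ x$ and equals $-\|\dot x(t)\|_{x(t)}^2$. The former comes from path-differentiability, which crucially ensures that the inner product $\langle v,\dot x(t)\rangle$ is independent of the selection $v\in\partial f(x(t))$, so that the selection $d(t)$ extracted from the inclusion can be used unambiguously; the latter comes from the quadratic variational structure underlying the interior Riemannian flow. Note that Assumption \ref{assumption:Lyapunov_RHBA}(2) (the weak Sard condition on critical values) is not needed for the present Lyapunov statement itself but is listed in the hypotheses because it will be invoked together with this proposition when applying Theorem \ref{convergence-thm-func-val} to conclude convergence of $\{f(x_k)\}$ for the discrete iterates.
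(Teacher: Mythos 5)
Your proposal is correct and follows essentially the same route as the paper: both derive $\tfrac{d}{dt}f(x(t))=-\norm{\dot x(t)}_{x(t)}^2$ by pairing the optimality/KKT reformulation of the flow (i.e.\ \eqref{eq:DI_manifold_Breg}) with $\dot x\in T_x\cM$ to cancel the normal component, invoke path-differentiability for the chain rule, and then argue condition (2) by contradiction via $\mu$-strong convexity forcing a constant trajectory. Your closing remark that Assumption \ref{assumption:Lyapunov_RHBA}(2) is not used in this proposition itself is also accurate.
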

\begin{proof}
Consider any trajectory $z(\cdot)$ for the differential inclusion \eqref{eq:DI_manifold} with $0\notin{\rm P}_{T_x\cM}(\nabla^2\phi(z(0))^{-1}\partial f(z(0)))$. By the equivalent reformulation \eqref{eq:DI_manifold_Breg}, we have
\[
\frac{d}{dt}\nabla\phi(x(t))\in-\partial f(x)-N_x\cM.
\]
Note that $\dot z\in T_x\cM$, inner product with $\dot z$ for the above inclusion, we have
\[
\begin{aligned}
\frac{d}{ds}f(z(s))=\inner{\partial f(z(s)),\dot z(s)}\ni-\inner{\nabla^2\phi(z(s))\dot z(s),\dot z(s)}.
\end{aligned}
\]
Therefore, for any $t\geq0$, it holds that
\[
f(z(t))-f(z(0))=-\int_0^t\inner{\nabla^2\phi(z(s))\dot z(s),\dot z(s)}ds\leq-\int_0^t\mu\norm{\dot z(s)}^2ds\leq0.
\]
We now prove the required result by contradiction. Suppose for any $t\geq0$, $f(z(t))=f(z(0))$, then $\dot z(s)=0$ for almost all $s\geq0$. Since $z(\cdot)$ is absolutely continuous, therefore, $z(t)\equiv z(0)$ for any $t\geq0$. Then, 
\[
0=\dot z(t)\in-{\rm P}_{T_{z(t)}\cM}(\nabla^2\phi(z(t))^{-1}\partial f(z(t)))=-{\rm P}_{T_{z(0)}\cM}(\nabla^2\phi(z(0))^{-1}\partial f(z(0))).
\]
This is contradictory to the fact that $0\notin{\rm P}_{T_x\cM}(\nabla^2\phi(z(0))^{-1}\partial f(z(0)))$. Therefore, there exists $T>0$, such that $f(z(T))<\sup_{t\in[0,T]}f(z(t))\leq f(z(0))$. This completes the proof.
\end{proof}

By Lemma \ref{le:noise_ass}, Lemma \ref{le:d_ass}, Proposition \ref{prop:Lyapunov_RHBA}, and Theorem \ref{convergence-thm-func-val}, we can directly derive the following convergence results for \eqref{eq:alg-RHBA}.
\begin{theorem}
    \label{thm:BGD}
    Suppose Assumptions \ref{assumption:alg} and \ref{assumption:Lyapunov_RHBA} hold. 
    \begin{enumerate}
        \item Then almost surely, any cluster point of $\{x_k\}$ generated by \eqref{eq:alg-RHBA} lies in $\cS$ and the function values $\{f(x_k)\}$ converge. 
        \item Furthermore, if the complementarity condition or isolated condition for the stable set holds, then any limit point is the stationary point of Problem \eqref{prob:PCP}.
    \end{enumerate}
\end{theorem}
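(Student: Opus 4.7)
The plan is to recast the scheme \eqref{eq:alg-RHBA} in the abstract stochastic-approximation template \eqref{Eq:general_iterative} via the reformulation \eqref{eq:alg-RHBA-reform}, and then invoke Theorem \ref{convergence-thm-func-val} with set-valued map $\cH(x)=\mathrm{P}_{T_x\cM}\nabla^2\phi(x)^{-1}\partial f(x)$, observing that $\cH^{-1}(0)=\cS$. The retraction residual $\Delta_k$ is absorbed into the effective update direction $v_k$; the uniform boundedness of $\{d_k\}$, $\{\xi_k\}$, and $\{\nabla^2\phi(x_k)^{-1}\}$, together with the uniform convergence in Assumption \ref{assumption:alg}.6, gives $\|\Delta_k\|\to 0$, so that $v_k$ is an inexact evaluation of $\cH(x_k)$ with vanishing bias.

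Next, I verify the hypotheses of Theorem \ref{convergence-thm-func-val}. Uniform boundedness of $\{x_k\}$ and $\{v_k\}$ is immediate from Assumption \ref{assumption:alg}.2 together with the bounds on $d_k$, $\xi_k$, $\nabla^2\phi(x_k)^{-1}$, and $\Delta_k$. The stepsize condition is Assumption \ref{assumption:alg}.3. Lemma \ref{le:noise_ass} shows that $\{\tilde\xi_k\}$ is a uniformly bounded martingale-difference sequence, and combined with $\eta_k=o(1/\log k)$ this implies the noise summability \eqref{Eq:noise_cond} via the classical estimates of \cite{benaim2005stochastic}. The closed-graph/averaging condition is precisely Lemma \ref{le:d_ass}. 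Finally, Proposition \ref{prop:Lyapunov_RHBA} supplies $f$ as a Lyapunov function for \eqref{eq:DI_manifold}, and Assumption \ref{assumption:Lyapunov_RHBA}.2 is the required weak Sard condition. Applying Theorem \ref{convergence-thm-func-val} immediately yields Part 1.

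For Part 2, I leverage the avoidance results of Section 4 together with the standard fact that the almost-sure limit set $L$ of the interpolated process is internally chain-transitive for the semiflow of \eqref{eq:DI_manifold}, hence compact, connected, and forward-invariant. Under the complementarity condition, the argument in the proof of Theorem \ref{thm:completmentarity}(ii) is purely algebraic: any $x^*\in\cS$ with $s(x^*)+x^*\geq 0$ satisfies $s(x^*)\in -N_{\overline C}(x^*)$, and hence $0\in\partial f(x^*)+N_{x^*}\cM+N_{\overline C}(x^*)$, so $x^*\in\Omega$; applying this pointwise to every element of $L$ gives the claim. Under the isolated condition, connectedness of $L\subseteq\cS$ together with isolation of $\cS\cap(\cM\cap\partial C)$ forces $L$ to be either a singleton on $\cM\cap\partial C$ or entirely contained in $\cM\cap C$. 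In the interior case, $\cS\cap(\cM\cap C)\subseteq\Omega$ since the normal cone of $\overline C$ is trivial there. In the singleton case, the entire sequence converges to a single $x^*$, and I rule out $x^*\in\cS\setminus\Omega$ by combining Proposition \ref{prop:exiting} with the shadowing property of perturbed solutions: a cluster point that is not Lyapunov stable for the inclusion cannot be an isolated limit of the interpolated trajectory.

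The main technical difficulty lies in the last transfer from the continuous-time finite-time exit property of Proposition \ref{prop:exiting} to the discrete sequence. The cleanest route is to observe that any isolated cluster point of the interpolated process must be Lyapunov stable for the inclusion, whereas no point of $\cS\setminus\Omega$ can be Lyapunov stable by Proposition \ref{prop:exiting}. This yields the required contradiction without re-deriving the full escape analysis in the discrete setting, and it also ensures that the complementarity branch of the argument does not require any dynamical input beyond membership in $\cS$.
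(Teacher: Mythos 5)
Your treatment of Part 1 is exactly the paper's: rewrite \eqref{eq:alg-RHBA} in the form \eqref{eq:alg-RHBA-reform}, check the hypotheses of Theorem \ref{convergence-thm-func-val} via Lemma \ref{le:noise_ass}, Lemma \ref{le:d_ass}, Proposition \ref{prop:Lyapunov_RHBA} and Assumption \ref{assumption:Lyapunov_RHBA}, and conclude that every cluster point lies in $\cS=\cH^{-1}(0)$ with $\{f(x_k)\}$ convergent. The complementarity branch of Part 2 is also fine and matches the paper's intent: the argument in the proof of Theorem \ref{thm:completmentarity}(ii) is indeed a static computation on points of $\cS$ ($x\circ s(x)=0$ plus $s_{\cJ}\geq 0$ gives $s(x)\in -N_{\overline C}(x)$, hence membership in $\Omega$), so it applies to each cluster point delivered by Part 1 with no further dynamical input.

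The genuine gap is in the isolated-condition branch, specifically the assertion that ``any isolated cluster point of the interpolated process must be Lyapunov stable for the inclusion.'' That is false as a general principle: a perturbed solution of a descent inclusion can converge to an equilibrium that is not Lyapunov stable (the classical example is a strict saddle of a gradient flow, whose stable manifold carries a positive-dimensional set of initial conditions converging to it; with degenerate or adapted noise the discrete iterates can do the same). The paper itself explicitly distinguishes its results from the stable-manifold/saddle-avoidance literature at the end of Section 4 precisely because ``not Lyapunov stable'' does not by itself preclude convergence of trajectories or of their discretizations. What Proposition \ref{prop:exiting} actually provides is stronger than instability --- a \emph{uniform} drift $\sum_i (u_i/x_i)\dot x_i\geq\delta$ throughout a whole relative neighborhood of the spurious point --- and this is the structure one must transfer to the discrete iterates (e.g.\ by tracking the increments of $V(x)=\sum_i u_i\log x_i$ along $\{x_k\}$ and controlling the noise, which is delicate because $\nabla V$ blows up at the boundary), or else invoke the tracking property of perturbed solutions over finite time windows together with a uniform lower bound on the exit time; note that the exit time in Proposition \ref{prop:exiting} degenerates as the starting point approaches the face $\{x_{\bar i}=0\}$, so even that route needs care. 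Your reduction to Lyapunov stability discards exactly the quantitative information that makes the escape argument work, so as written the singleton case of the isolated branch does not close. (To be fair, the paper's own one-line proof also leaves this discrete transfer implicit, relying on Theorem \ref{thm:isolated} for the continuous flow; but your proposal replaces that implicit step with an explicit claim that is incorrect.)
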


As shown in Section 3, random perturbations enable the trajectory to converge to the true stationary points of the perturbed problem. In many instances, the stationary points of the perturbed problem are nearly stationary points of the original problem. This property is inherited by the sequence generated by the iterative algorithm.

\begin{corollary}
    Suppose Assumptions \ref{assumption:alg} and \ref{assumption:Lyapunov_RHBA} hold, given a tolerance $\epsilon>0$. Randomly choose a $v\in\mathbb{B}_\epsilon$ and $u\in\mathbb{B}_\epsilon$, then any sequence generated by \eqref{eq:alg-RHBA} for the perturbed problem subsequentially converges to a stationary point of the perturbed problem.
\end{corollary}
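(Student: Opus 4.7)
The plan is to apply Theorem \ref{thm:BGD}(2) to the perturbed problem, using the isolated-stable-set property guaranteed by the random perturbation theorem of Section 4. Write the perturbed problem as $\min f_v(x)$ subject to $x \in \overline{C} \cap \mathcal{M}_u$, where $f_v(x) := f(x)+\langle \nabla\phi(x),v\rangle$ and $\mathcal{M}_u := \{x : c(x)+u=0\}$, and denote by $\mathcal{S}(u,v)$ and $\Omega(u,v)$ its stable set and stationary set respectively. Since $v,u \in \mathbb{B}_\epsilon$ are drawn from a distribution absolutely continuous with respect to Lebesgue measure, conclusions that hold for almost every $(u,v)$ will hold almost surely under the random draw.

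First I verify that Assumptions \ref{assumption:alg} and \ref{assumption:Lyapunov_RHBA} remain valid for the perturbed problem. Because $\phi \in \mathcal{C}^2(C)$, the term $x \mapsto \langle \nabla \phi(x),v\rangle$ is $\mathcal{C}^1$, so $f_v$ inherits path-differentiability from $f$, and its Clarke subdifferential is simply $\partial f_v(x) = \partial f(x) + \nabla^2\phi(x)v$. For almost every $u$, Sard's theorem applied to $-c$ ensures $\mathcal{M}_u$ is a smooth embedded manifold on which LICQ holds, and boundedness of $\{x_k\}$ and $\{\nabla^2\phi(x_k)^{-1}\}$ together with the noise and stepsize assumptions transfer without change. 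The Sard-type weak-Lyapunov condition (Assumption \ref{assumption:Lyapunov_RHBA}.2) for the perturbed problem can be obtained exactly as in the post-Assumption discussion: adding the $\mathcal{C}^1$ perturbation preserves any Whitney stratification, and the projection formula $\partial f_v(x) \subset \nabla_{\mathcal{M}_x} f_v(x) + N_{\mathcal{M}_x}(x)$ reduces the critical-value set to a finite union of smooth Sard sets, each of measure zero.

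Next, invoke Theorem~4.2 (the random perturbation theorem) to conclude that, for almost every $(u,v) \in \mathbb{B}_\epsilon \times \mathbb{B}_\epsilon$, the stable set $\mathcal{S}(u,v)$ has no cluster points, i.e.\ satisfies the isolated condition of Theorem \ref{thm:isolated} with respect to $\mathcal{M}_u \cap \partial C$. Combined with Proposition \ref{prop:Lyapunov_RHBA} (which says $f_v$ is a Lyapunov function for the associated differential inclusion), the hypotheses of Theorem \ref{thm:BGD}(2) are met for the perturbed problem. Therefore, almost surely any cluster point of $\{x_k\}$ generated by \eqref{eq:alg-RHBA} applied to the perturbed data $(f_v,\mathcal{M}_u)$ lies in $\Omega(u,v)$, giving subsequential convergence to a stationary point of the perturbed problem.

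The delicate point I expect to wrestle with is \textbf{decoupling} the randomness in $(u,v)$ from the algorithmic randomness in $\{\xi_k\}$: the full-measure set of ``good'' perturbations coming from Theorem~4.2 is defined in the parameter space $\mathbb{R}^n \times \mathbb{R}^m$, while the almost-sure statement of Theorem~\ref{thm:BGD} is defined on the sample space of the noise. A clean way to handle this is to fix $(u,v)$ first in the good set, and then apply Theorem~\ref{thm:BGD} conditionally; by Fubini this yields the joint almost-sure conclusion. A secondary subtlety is ensuring that Assumption~\ref{assumption:alg}.2 (boundedness of $\{x_k\}$ and $\{\nabla^2\phi(x_k)^{-1}\}$) is preserved uniformly in small perturbations; since the perturbation only adds a bounded drift $\nabla^2\phi(x_k) v$ (with $\|v\|\le\epsilon$) into the direction $d_k$, the same retraction-feasibility argument of Proposition~\ref{prop:stepsize_threshhold} still applies up to a slightly tightened stepsize threshold, which resolves the issue.
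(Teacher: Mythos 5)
Your proposal is correct and follows exactly the route the paper intends: the corollary is stated without a separate proof precisely because it is the immediate combination of the Section~4 perturbation theorem (almost every $(u,v)$ yields a stable set with no cluster points, hence the isolated condition), Proposition \ref{prop:Lyapunov_RHBA}, and Theorem \ref{thm:BGD}(2) applied to the perturbed data, with the Fubini-style decoupling of the perturbation randomness from the noise randomness handled as you describe. One small imprecision worth noting: the added term in the subgradient is $\nabla^2\phi(x_k)v$, which need not be bounded near $\partial C$; what is bounded is its contribution $+v$ to the effective direction $\nabla^2\phi(x_k)^{-1}d_k$, which is what the feasibility and boundedness arguments actually require.
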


\subsection{Discussion on mirror descent scheme}
In this section, we briefly discuss the mirror descent scheme, where many problems remain open. As shown in \cite{ding2024stochastic}, the Hessian barrier method and mirror descent scheme for unconstrained problems are unified under the same differential inclusion. In this case, the Hessian barrier method is a special preconditioned subgradient method. Using Taylor's expansion and the property $\nabla^2\phi^*(\nabla\phi(x)) = (\nabla^2\phi(x))^{-1}$, we derive:
\[
\begin{aligned}
x_{k+1}=&\nabla\phi^*(\nabla\phi(x_k)-\eta_kd_k)\\
=&x_k-\eta_k\nabla^2\phi(x_k)^{-1}d_k+O(\eta_k^2),
\end{aligned}
\]
Thus, the linear interpolation of the sequence ${x_k}$ represents a perturbed solution of the continuous trajectory and inherits its convergence properties.

The situation for constrained problems is significantly more challenging, particularly regarding boundary behavior. Analogous to the Riemannian Hessian-barrier method, we propose the following heuristic Riemannian mirror descent scheme:
\begin{equation}
\begin{aligned}
\tilde x_{k+1}=&\argmin_{x\in\R^n}\;\left\{\inner{d_k+\xi_k,x}+\frac{1}{\eta_k}\cD_\phi(x,x_k):\;\text{s.t. }x-x_k\in T_{x_k}\cM\right\}\\
x_{k+1}=&{\rm R}_{x_k}(\tilde x_{k+1}-x_k).
\end{aligned}
\label{R-Breg}
\tag{R-Breg}
\end{equation}
where $d_k\in\partial f^{\delta_k}(x_k)$ with $\lim_{k\to\infty}\delta_k=0$. When $\mathcal{M}$ is an affine space $\{ x : Ax = b \}$ and the retraction is the identity map, the algorithm simplifies to:
\begin{equation}
\begin{aligned}
x_{k+1}=\argmin_{x\in\R^n}\;&\left\{\inner{d_k+\xi_k,x}+\frac{1}{\eta_k}\cD_\phi(x,x_k)\right\}\\
\text{s.t. }&Ax=b.
\end{aligned}
\label{Breg}
\end{equation}
Given $x\in\cM\cap C$, $d\in\R^n$ and $\eta>0$, define $(x^+(x,d,\eta),y(x,d,\eta))$ as the solution to:
\[
\left\{
\begin{aligned}
\nabla\phi^*(\nabla\phi(x)-\eta(d-A^Ty))=x^+,\\
A\nabla\phi^*(\nabla\phi(x)-\eta(d-A^Ty))=b.
\end{aligned}\right.
\]
By definition, $x_{k+1}=x^+(x_k,d_k+\xi_k,\eta_k)$. We require the following assumption:
\begin{equation}
\limsup_{\eta\to0^+}\sup_{d\in\mathbb{B}_\rho}\sup_{x\in\mathbb{B}_\rho\cap\cM\cap C}\frac{\norm{x-\eta\nabla^2\phi(x)^{-1}(d-A^Ty(x,d,\eta))-\nabla\phi^*(\nabla\phi(x)-\eta(d-A^Ty(x,d\eta)))}}{\eta\norm{\nabla^2\phi(x)^{-1}g}}=0.
\label{eq:interpolate_Breg}
\end{equation}
Under this assumption, the interpolated process of $\{x_k\}$ is a perturbed solution of the differential inclusion \eqref{eq:DI_manifold}. Consequently, all convergence results from Sections 3 and 4 for the trajectory of the differential inclusion on the manifold are inherited by the Riemannian mirror descent scheme. However, verifying this condition is generally difficult. An example using the entropy function over the negative orthant can satisfy this condition.

\begin{example}
For $\cM=\R^n$, $C=\R^n_{++}$, and the kernel function $\phi(x) = \sum_{i=1}^n (x_i \log x_i - x_i)$, the condition \eqref{eq:interpolate_Breg} holds.
\end{example}
\begin{proof}
When $\phi(x)=\sum_{i=1}^n(x_i\log(x_i)-x_i)$, basic calculation gives
\[
\nabla\phi^*(\nabla\phi(x)-\eta d)=x\circ e^{-\eta d}.
\] 
For a given $\rho > 0$, consider each coordinate:
\[
\begin{aligned}
&\limsup_{\eta\to0^+}\frac{\norm{x_i-\eta x_i d_i-x_i e^{-\eta d_i}}}{\eta}=\limsup_{\eta\to0^+}\frac{\norm{x_i-\eta x_id_i-x_i(1-\eta d_i+\frac{1}{2}(\eta d_i)^2)e^{-\zeta_id_i}}}{\eta}\\
=&\limsup_{\eta\to0^+}\frac{1}{2}\eta d_i^2e^{-\zeta_id_i}=0,
\end{aligned}
\]
where $\zeta_i\in[0,\eta_i]$. This completes the proof.
\end{proof}

Suppose Assumption \ref{assumption:alg} and Assumption \ref{assumption:Lyapunov_RHBA} hold.  The mirror descent scheme for the above example generates a perturbed solution of the corresponding differential inclusion. Consequently, the iterates subsequentially converge to a true stationary point, rather than a spurious one, under an isolation condition or complementary condition. Moreover, the isolation condition typically holds when small random perturbations are applied to the objective function and constraints.

As a closing note for this section, we highlight a key property of the mirror descent scheme applied to optimization problems with linear constraints: under mild conditions, if the sequence of iterates generated by the scheme converges, it converges to a stationary point of the original problem.
\begin{proposition}
\label{prop:Breg_convergence}
Consider an affine space $\mathcal{M}$ and an open set $C$. Suppose the following conditions hold: 
\begin{enumerate}
    \item $\phi$ is a Legendre function over $C$;
    \item Stepsize and noise satisfy 
    \[        \eta_k>0,\quad\sum_{k=0}^\infty\eta_k=\infty,\quad\lim_{s\rightarrow\infty}\sup_{s\leq i\leq\Lambda_\eta(\lambda_\eta(s)+T)}\norm{\sum_{k=s}^i\eta_k\xi_k}=0.
    \]
    \item For any $x\in\cM_{\overline{C}}$: $N_{\overline{C}}(x)\cap N_x\cM=\{0\}$.
\end{enumerate}
If $x_k$ converges to $\bar x$, then $\bar x$ is the stationary point of the original optimization problem.
\end{proposition}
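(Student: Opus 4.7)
The plan is to extract first-order information from the KKT system of each mirror-descent step, average it over a sliding time window so that the noise assumption can be exploited, and then pass to the limit to produce a stationarity certificate at $\bar x$. The optimality condition for \eqref{Breg} supplies multipliers $\lambda_k\in\R^m$ with
\[
\nabla\phi(x_{k+1})-\nabla\phi(x_k)+\eta_k(d_k+\xi_k)+\eta_k A^{T}\lambda_k=0.
\]
For each $N$ I would choose $M(N)$ so that $T_N:=\sum_{k=N}^{N+M(N)}\eta_k\in[T,2T]$ for a fixed $T>0$, which is possible since $\eta_k\to 0$ and $\sum_k\eta_k=\infty$. Telescoping and dividing by $T_N$ yields
\[
\bar w_N+\bar d_N+\bar\xi_N+A^{T}\bar\lambda_N=0,\qquad \bar w_N:=\frac{\nabla\phi(x_{N+M(N)+1})-\nabla\phi(x_N)}{T_N},
\]
where $\bar d_N,\bar\lambda_N,\bar\xi_N$ denote the $\eta_k/T_N$-weighted averages of $d_k,\lambda_k,\xi_k$ across the window.

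The noise condition forces $\bar\xi_N\to 0$; local Lipschitzness of $f$ together with outer semicontinuity of $\partial f$ under averaging ensures that $\bar d_N$ is bounded with every cluster point in $\partial f(\bar x)$. The decisive step is to show that $A^{T}\bar\lambda_N$ stays bounded along some subsequence. I would argue this by contradiction: assuming $\rho_N:=\|A^{T}\bar\lambda_N\|\to\infty$, dividing the displayed identity by $\rho_N$ and extracting a subsequence produces a unit vector $v^*\in N_{\bar x}\cM$ together with $\bar w_N/\rho_N\to-v^*$. Using that $\phi$ is Legendre (essential smoothness identifies the horizon cone of $\partial\phi$ at $\bar x$ with $N_{\overline C}(\bar x)$), cluster points of such normalized Bregman-gradient differences at points $x_k\to\bar x$ lie in $N_{\overline C}(\bar x)$, so $-v^*\in N_{\overline C}(\bar x)$. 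Because $N_{\bar x}\cM$ is a linear subspace, $-v^*\in N_{\bar x}\cM$ as well, so $-v^*\in N_{\overline C}(\bar x)\cap N_{\bar x}\cM=\{0\}$ by hypothesis~(3), contradicting $\|v^*\|=1$.

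With boundedness in hand, I would extract a further subsequence along which $\bar d_N\to d^*\in\partial f(\bar x)$, $A^{T}\bar\lambda_N\to\mu^*\in N_{\bar x}\cM$, and $\bar w_N\to w^*$, so that $d^*+\mu^*+w^*=0$. If $\bar x\in C$, continuity of $\nabla\phi$ at $\bar x$ gives $w^*=0$ and the conclusion is immediate. If $\bar x\in\partial C$, the same Legendre-horizon argument applied without the $\rho_N$-rescaling yields $w^*\in N_{\overline C}(\bar x)$. Setting $\nu:=w^*$ we obtain $0=d^*+\mu^*+\nu\in\partial f(\bar x)+N_{\bar x}\cM+N_{\overline C}(\bar x)$, so $\bar x\in\Omega$.

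The principal obstacle is the justification that cluster points of $\bar w_N$ and of its $\rho_N$-rescalings actually land inside the cone $N_{\overline C}(\bar x)$ rather than merely in its linear span when $\bar x\in\partial C$. This requires a careful use of the Legendre property of $\phi$, combined with the specific structure of the mirror-descent update, to pin down both the magnitude and direction in which $\nabla\phi(x_k)$ can diverge along the trajectory and to ensure that telescoping preserves the correct conical sign. Hypothesis~(3) is precisely what decouples $N_{\bar x}\cM$ from $N_{\overline C}(\bar x)$ and rules out the pathological cancellation between the unbounded mirror gradient and the multipliers $A^{T}\lambda_k$; without it the argument would not close.
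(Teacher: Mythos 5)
Your overall strategy (telescope the mirror-descent optimality conditions, normalize, identify the diverging part of $\nabla\phi$ with the normal cone via the Legendre property, and invoke hypothesis (3) to rule out cancellation with $N_{\bar x}\cM$) is the same as the paper's, but your choice of a bounded sliding window creates a genuine gap that the paper's anchored telescoping avoids. Your quantity $\bar w_N=\bigl(\nabla\phi(x_{N+M(N)+1})-\nabla\phi(x_N)\bigr)/T_N$ is a difference of two Bregman gradients at points that \emph{both} converge to $\bar x\in\partial C$, where both gradients diverge. Such a difference, even after rescaling, is only constrained to lie in ${\rm span}(N_{\overline C}(\bar x))$, not in the cone itself: already for $\phi(x)=-\log x$ on $(0,\infty)$, the normalized difference $\nabla\phi(y)-\nabla\phi(x)=1/x-1/y$ with $x,y\to0^+$ takes either sign depending on whether the trajectory is locally increasing or decreasing toward the boundary. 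You flag this yourself as ``the principal obstacle,'' but it is not a technicality you can defer: your contradiction argument needs $-v^*\in N_{\overline C}(\bar x)\cap N_{\bar x}\cM$, whereas span-containment only gives $-v^*\in{\rm span}(N_{\overline C}(\bar x))\cap N_{\bar x}\cM$, and hypothesis (3) concerns the cone, not its span (a subspace can meet the span of a pointed cone nontrivially while meeting the cone only at the origin). The same sign ambiguity infects your final step $w^*\in N_{\overline C}(\bar x)$ in the boundary case.

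The paper closes exactly this hole by telescoping from the \emph{fixed} initial point: it considers $\nu_k=\bigl(\nabla\phi(x_k)-\nabla\phi(x_0)\bigr)/\norm{\nabla\phi(x_k)}$, so that in the boundary case the anchor term vanishes under the normalization and $\nu_k$ converges to the horizon direction of $\nabla\phi$ at $\bar x$, which \cite[Lemma 4.1]{alvarez2004hessian} places in $N_{\overline C}(\bar x)$ with the correct sign. The case split is then on $\Gamma_k=\norm{\nabla\phi(x_k)}/\sum_{i<k}\eta_i$: a bounded subsequence yields the stationarity certificate directly, and $\Gamma_k\to\infty$ forces a unit vector of $N_{\overline C}(\bar x)$ into $N_{\bar x}\cM$, contradicting (3). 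If you wish to keep your windowed decomposition, you would need an additional argument controlling the direction of $\nabla\phi(x_{N+M(N)+1})-\nabla\phi(x_N)$ along the actual trajectory; as written, the proof does not go through.
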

\begin{proof}
The update of mirror descent scheme gives
\[
\nabla\phi(x_{k+1})\in\nabla\phi(x_k)-\eta_k\partial f(x_k)+N_L.
\]
Summing the updates from $k = 0$ to $k-1$, we obtain: 
\[
\frac{\nabla\phi(x_k)-\nabla\phi(x_0)}{\norm{\nabla\phi(x_k)}}\cdot\frac{\norm{\nabla\phi(x_k)}}{\sum_{i=0}^{k-1}\eta_i}\in-\frac{\sum_{i=0}^{k-1}\eta_i\partial f(x_i)}{\sum_{i=0}^{k-1}\eta_i}+N_x\cM,
\]
noting that the sum of the normal cone terms remains $N_L$ because $N_L$ is a cone. Define 
\[
\begin{aligned}
\nu_k=\frac{\nabla\phi(x_k)-\nabla\phi(x_0)}{\norm{\nabla\phi(x_k)}},\;\Gamma_k=\frac{\norm{\nabla\phi(x_k)}}{\sum_{i=0}^{k-1}\eta_i},\;G_k=-\frac{\sum_{i=0}^{k-1}\eta_i\partial f(x_i)}{\sum_{i=0}^{k-1}\eta_i}.
\end{aligned}
\]
Without loss of generality, assume $\nu_k\to\bar\nu$, note that $\norm{\bar\nu}=1$. By outer-semicontinuity of $\partial f$ and $\sum_{k=0}^\infty\eta_k=\infty$, $G_k\to-\partial f(\bar x)$. 
Since $\phi$ is a Legendre kernel, by \cite[Lemma 4.1]{alvarez2004hessian}, $\bar\nu\in N_{\overline{C}}(\bar x)$. When some $\Gamma_{k_j}$ converges to some $\bar\Gamma<\infty$, then 
\[
\bar\Gamma\bar\nu\in-\partial f(\bar x)+N_L.
\]
This is exactly the stationary condition. When $\Gamma_k$ diverges to infinity, we have $\bar\nu\in N_L$. This is contradictory to the fact that $N_{\overline{C}}(x)\cap N_x\cM=\{0\}$. This completes the proof.
\end{proof}
When $\overline{C}=\{x:g_i(x)\leq0\}$, $\cM=\{x:c(x)=0\}$,the condition $N_{\overline{C}}(x) \cap N_x \mathcal{M} = {0}$ is equivalent to the Mangasarian-Fromovitz Constraint Qualification (MFCQ) at $x$. This equivalence is well-established in nonlinear optimization literature, see for example \cite{bertsekas1997nonlinear}.


\section{Conclusion}

We have studied a class of nonsmooth, nonconvex optimization problems over manifolds with boundary, where the feasible region is the intersection of the closure of an open set and a smooth manifold. To address this problem, we introduced a novel continuous-time Riemannian Hessian-barrier Subgradient flow. This dynamical system unifies two prominent optimization paradigms—the Hessian barrier method and the mirror descent scheme, by showing that both arise as discrete-time approximations of the same underlying flow. Our analysis of the trajectory reveals that previously observed convergence deficiencies in the Hessian barrier and mirror descent methods correspond to spurious stationary points of the induced differential inclusion (DI), rather than to true stationary points of the original problem. We further demonstrate that, under mild conditions, trajectories of the continuous flow escape these spurious attractors. By discretizing the RHBS Riemannian Hessian-barrier Subgradient flow via stochastic-approximation techniques, we derive two new interior-point algorithms whose iterates remain strictly within the relative interior of the feasible set. Leveraging the bridge between continuous and discrete dynamics, we prove that—under mild sufficient conditions—every accumulation point of the discrete iterates is a genuine stationary point of the original problem. When those conditions do not hold, we propose a simple perturbation scheme that ensures convergence to a stationary point of a nearby perturbed problem, which is arbitrarily close to a true stationary point of the original formulation.
\bibliography{BG}

\begin{thebibliography}{10}

\bibitem{alvarez2004hessian}
F.~Alvarez, J.~Bolte, and O.~Brahic.
\newblock {Hessian Riemannian gradient flows in convex programming}.
\newblock {\em SIAM J. Control and Optimization}, 43(2):477--501, 2004.

\bibitem{attouch2004singular}
H.~Attouch, J.~Bolte, P.~Redont, and M.~Teboulle.
\newblock Singular riemannian barrier methods and gradient-projection dynamical systems for constrained optimization.
\newblock {\em Optimization}, 53(5-6):435--454, 2004.

\bibitem{aubin2012differential}
J.-P. Aubin and A.~Cellina.
\newblock {\em Differential inclusions: set-valued maps and viability theory}, volume 264.
\newblock Springer Science \& Business Media, 2012.

\bibitem{bauschke2017descent}
H.~H. Bauschke, J.~Bolte, and M.~Teboulle.
\newblock {A descent lemma beyond Lipschitz gradient continuity: first-order methods revisited and applications}.
\newblock {\em Mathematics of Operations Research}, 42(2):330--348, 2017.

\bibitem{benaim2005stochastic}
M.~Bena{\"\i}m, J.~Hofbauer, and S.~Sorin.
\newblock Stochastic approximations and differential inclusions.
\newblock {\em SIAM J. Control and Optimization}, 44(1):328--348, 2005.

\bibitem{benaim2006stochastic}
M.~Bena{\"\i}m, J.~Hofbauer, and S.~Sorin.
\newblock Stochastic approximations and differential inclusions, part ii: Applications.
\newblock {\em Mathematics of Operations Research}, 31(4):673--695, 2006.

\bibitem{benamou2015iterative}
J.-D. Benamou, G.~Carlier, M.~Cuturi, L.~Nenna, and G.~Peyr{\'e}.
\newblock {Iterative Bregman projections for regularized transportation problems}.
\newblock {\em SIAM J. Scientific Computing}, 37(2):A1111--A1138, 2015.

\bibitem{bertsekas1997nonlinear}
D.~P. Bertsekas.
\newblock Nonlinear programming.
\newblock {\em J. of the Operational Research Society}, 48(3):334--334, 1997.

\bibitem{bolte2007clarke}
J.~Bolte, A.~Daniilidis, A.~Lewis, and M.~Shiota.
\newblock Clarke subgradients of stratifiable functions.
\newblock {\em SIAM J. Optimization}, 18(2):556--572, 2007.

\bibitem{bolte2021conservative}
J.~Bolte and E.~Pauwels.
\newblock Conservative set valued fields, automatic differentiation, stochastic gradient methods and deep learning.
\newblock {\em Mathematical Programming}, 188:19--51, 2021.

\bibitem{bolte2018first}
J.~Bolte, S.~Sabach, M.~Teboulle, and Y.~Vaisbourd.
\newblock First order methods beyond convexity and lipschitz gradient continuity with applications to quadratic inverse problems.
\newblock {\em SIAM J. Optimization}, 28(3):2131--2151, 2018.

\bibitem{bolte2003barrier}
J.~Bolte and M.~Teboulle.
\newblock Barrier operators and associated gradient-like dynamical systems for constrained minimization problems.
\newblock {\em SIAM J. Control and Optimization}, 42(4):1266--1292, 2003.

\bibitem{bomze2019hessian}
I.~M. Bomze, P.~Mertikopoulos, W.~Schachinger, and M.~Staudigl.
\newblock Hessian barrier algorithms for linearly constrained optimization problems.
\newblock {\em SIAM Journal on Optimization}, 29(3):2100--2127, 2019.

\bibitem{borkar2009stochastic}
V.~S. Borkar.
\newblock {\em Stochastic approximation: a dynamical systems viewpoint}, volume~48.
\newblock Springer, 2009.

\bibitem{bregman1967relaxation}
L.~M. Bregman.
\newblock The relaxation method of finding the common point of convex sets and its application to the solution of problems in convex programming.
\newblock {\em USSR Computational Mathematics and Mathematical Physics}, 7(3):200--217, 1967.

\bibitem{byrd1999active}
R.~H. Byrd, M.~E. Hribar, and J.~Nocedal.
\newblock An interior point algorithm for large-scale nonlinear programming.
\newblock {\em SIAM Journal on Optimization}, 9(4):877--900, 1999.

\bibitem{cambier2016robust}
L.~Cambier and P.-A. Absil.
\newblock Robust low-rank matrix completion by riemannian optimization.
\newblock {\em SIAM Journal on Scientific Computing}, 38(5):S440--S460, 2016.

\bibitem{castera2021inertial}
C.~Castera, J.~Bolte, C.~F{\'e}votte, and E.~Pauwels.
\newblock An inertial newton algorithm for deep learning.
\newblock {\em J. of Machine Learning Research}, 22(1):5977--6007, 2021.

\bibitem{chen2024spurious}
H.~Chen, J.~Li, and A.~M.-C. So.
\newblock Spurious stationarity and hardness results for mirror descent.
\newblock {\em arXiv preprint arXiv:2404.08073}, 2024.

\bibitem{chu2023efficient}
H.~T. Chu, L.~Liang, K.-C. Toh, and L.~Yang.
\newblock An efficient implementable inexact entropic proximal point algorithm for a class of linear programming problems.
\newblock {\em Computational Optimization and Applications}, 85(1):107--146, 2023.

\bibitem{clarke1990optimization}
F.~H. Clarke.
\newblock {\em Optimization and nonsmooth analysis}.
\newblock SIAM, 1990.

\bibitem{davis2021subgradient}
D.~Davis, D.~Drusvyatskiy, and L.~Jiang.
\newblock Subgradient methods near active manifolds: saddle point avoidance, local convergence, and asymptotic normality.
\newblock {\em arXiv preprint arXiv:2108.11832}, page 170, 2021.

\bibitem{davis2020stochastic}
D.~Davis, D.~Drusvyatskiy, S.~Kakade, and J.~D. Lee.
\newblock Stochastic subgradient method converges on tame functions.
\newblock {\em Foundations of Computational Mathematics}, 20(1):119--154, 2020.

\bibitem{ding2023nonconvex}
K.~Ding, J.~Li, and K.-C. Toh.
\newblock {Nonconvex Stochastic Bregman Proximal Gradient Method with Application to Deep Learning}.
\newblock {\em arXiv preprint arXiv:2306.14522}, 2023.

\bibitem{ding2024stochastic}
K.~Ding and K.-C. Toh.
\newblock Stochastic bregman subgradient methods for nonsmooth nonconvex optimization problems.
\newblock {\em arXiv preprint arXiv:2404.17386}, 2024.

\bibitem{ding2023adam}
K.~Ding, N.~Xiao, and K.-C. Toh.
\newblock Adam-family methods with decoupled weight decay in deep learning.
\newblock {\em arXiv preprint arXiv:2310.08858}, 2023.

\bibitem{ding2023squared}
L.~Ding and S.~J. Wright.
\newblock On squared-variable formulations.
\newblock {\em arXiv preprint arXiv:2310.01784}, 2023.

\bibitem{dragomir2021quartic}
R.-A. Dragomir, A.~d’Aspremont, and J.~Bolte.
\newblock Quartic first-order methods for low-rank minimization.
\newblock {\em J. of Optimization Theory and Applications}, 189:341--363, 2021.

\bibitem{dragomir2021fast}
R.~A. Dragomir, M.~Even, and H.~Hendrikx.
\newblock {Fast stochastic Bregman gradient methods: Sharp analysis and variance reduction}.
\newblock In {\em International Conference on Machine Learning}, pages 2815--2825. PMLR, 2021.

\bibitem{dragomir2021optimal}
R.-A. Dragomir, A.~B. Taylor, A.~d’Aspremont, and J.~Bolte.
\newblock {Optimal complexity and certification of Bregman first-order methods}.
\newblock {\em Mathematical Programming}, pages 1--43, 2021.

\bibitem{duchi2018stochastic}
J.~C. Duchi and F.~Ruan.
\newblock Stochastic methods for composite and weakly convex optimization problems.
\newblock {\em SIAM J. Optimization}, 28(4):3229--3259, 2018.

\bibitem{dvurechensky2025hessian}
P.~Dvurechensky and M.~Staudigl.
\newblock Hessian barrier algorithms for non-convex conic optimization.
\newblock {\em Mathematical Programming}, 209(1):171--229, 2025.

\bibitem{fiacco1968nonlinear}
A.~V. Fiacco and G.~P. McCormick.
\newblock {\em Nonlinear Programming: Sequential Unconstrained Minimization Techniques}.
\newblock Wiley, 1968.

\bibitem{frank1956algorithm}
M.~Frank and P.~Wolfe.
\newblock An algorithm for quadratic programming.
\newblock {\em Naval Research Logistics Quarterly}, 3(1-2):95--110, 1956.

\bibitem{hsieh2023riemannian}
Y.-P. Hsieh, M.~R. Karimi~Jaghargh, A.~Krause, and P.~Mertikopoulos.
\newblock Riemannian stochastic optimization methods avoid strict saddle points.
\newblock {\em Advances in Neural Information Processing Systems}, 36:29580--29601, 2023.

\bibitem{jaggi2013revisiting}
M.~Jaggi.
\newblock Revisiting frank-wolfe: Projection-free sparse convex optimization.
\newblock In {\em ICML}, pages 427--435, 2013.

\bibitem{jiang2023exact}
B.~Jiang, X.~Meng, Z.~Wen, and X.~Chen.
\newblock An exact penalty approach for optimization with nonnegative orthogonality constraints.
\newblock {\em Mathematical Programming}, 198(1):855--897, 2023.

\bibitem{josz2024global}
C.~Josz and L.~Lai.
\newblock Global stability of first-order methods for coercive tame functions.
\newblock {\em Mathematical Programming}, 207(1):551--576, 2024.

\bibitem{karmarkar1984new}
N.~Karmarkar.
\newblock A new polynomial-time algorithm for linear programming.
\newblock {\em Combinatorica}, 4(4):373--395, 1984.

\bibitem{khoo2025bregman}
Y.~Khoo, T.~Tang, and K.-C. Toh.
\newblock A bregman admm for bethe variational problem.
\newblock {\em arXiv preprint arXiv:2502.04613}, 2025.

\bibitem{kolb2023smoothing}
C.~Kolb, C.~L. M{\"u}ller, B.~Bischl, and D.~R{\"u}gamer.
\newblock Smoothing the edges: A general framework for smooth optimization in sparse regularization using hadamard overparametrization.
\newblock {\em CoRR}, 2023.

\bibitem{le2023nonsmooth}
T.~Le.
\newblock Nonsmooth nonconvex stochastic heavy ball.
\newblock {\em arXiv preprint arXiv:2304.13328}, 2023.

\bibitem{lee2019first}
J.~D. Lee, I.~Panageas, G.~Piliouras, M.~Simchowitz, M.~I. Jordan, and B.~Recht.
\newblock First-order methods almost always avoid strict saddle points.
\newblock {\em Mathematical programming}, 176:311--337, 2019.

\bibitem{lee2016gradient}
J.~D. Lee, M.~Simchowitz, M.~I. Jordan, and B.~Recht.
\newblock Gradient descent only converges to minimizers.
\newblock In {\em Conference on learning theory}, pages 1246--1257. PMLR, 2016.

\bibitem{levin2025effect}
E.~Levin, J.~Kileel, and N.~Boumal.
\newblock The effect of smooth parametrizations on nonconvex optimization landscapes.
\newblock {\em Mathematical Programming}, 209(1):63--111, 2025.

\bibitem{li2022implicit}
Z.~Li, T.~Wang, J.~D. Lee, and S.~Arora.
\newblock Implicit bias of gradient descent on reparametrized models: On equivalence to mirror descent.
\newblock {\em Advances in Neural Information Processing Systems}, 35:34626--34640, 2022.

\bibitem{lu2018relatively}
H.~Lu, R.~M. Freund, and Y.~Nesterov.
\newblock Relatively smooth convex optimization by first-order methods, and applications.
\newblock {\em SIAM J. Optimization}, 28(1):333--354, 2018.

\bibitem{mukkamala2019bregman}
M.~C. Mukkamala, F.~Westerkamp, E.~Laude, D.~Cremers, and P.~Ochs.
\newblock Bregman proximal framework for deep linear neural networks.
\newblock {\em arXiv preprint arXiv:1910.03638}, 2019.

\bibitem{nemirovskij1983problem}
A.~S. Nemirovskij and D.~B. Yudin.
\newblock Problem complexity and method efficiency in optimization.
\newblock {\em Wiley-Interscience}, 1983.

\bibitem{nesterov2018lectures}
Y.~Nesterov et~al.
\newblock {\em Lectures on convex optimization}, volume 137.
\newblock Springer, 2018.

\bibitem{nesterov1994interior}
Y.~Nesterov and A.~Nemirovskii.
\newblock {\em Interior-Point Polynomial Algorithms in Convex Programming}.
\newblock SIAM, 1994.

\bibitem{panageas2016gradient}
I.~Panageas and G.~Piliouras.
\newblock Gradient descent only converges to minimizers: Non-isolated critical points and invariant regions.
\newblock {\em arXiv preprint arXiv:1605.00405}, 2016.

\bibitem{panageas2019first}
I.~Panageas, G.~Piliouras, and X.~Wang.
\newblock First-order methods almost always avoid saddle points: The case of vanishing step-sizes.
\newblock {\em Advances in Neural Information Processing Systems}, 32, 2019.

\bibitem{powell1969method}
M.~J.~D. Powell.
\newblock A method for nonlinear constraints in minimization problems.
\newblock In R.~Fletcher, editor, {\em Optimization}, pages 283--298. Academic Press, 1969.

\bibitem{rockafellar1973dual}
R.~T. Rockafellar.
\newblock A dual approach to solving nonlinear programming problems by unconstrained optimization.
\newblock {\em Math. Programming}, 5:354--373, 1973.

\bibitem{rockafellar1997convex}
R.~T. Rockafellar.
\newblock {\em Convex analysis}, volume~11.
\newblock Princeton University Press, 1997.

\bibitem{shub2013global}
M.~Shub.
\newblock {\em Global stability of dynamical systems}.
\newblock Springer Science \& Business Media, 2013.

\bibitem{tang2024optimization}
T.~Tang and K.-C. Toh.
\newblock Optimization over convex polyhedra via hadamard parametrizations.
\newblock {\em Mathematical Programming}, pages 1--41, 2024.

\bibitem{tseng2011first}
P.~Tseng, I.~M. Bomze, and W.~Schachinger.
\newblock A first-order interior-point method for linearly constrained smooth optimization.
\newblock {\em Mathematical Programming}, 127:399--424, 2011.

\bibitem{vandereycken2013low}
B.~Vandereycken.
\newblock Low-rank matrix completion by riemannian optimization.
\newblock {\em SIAM Journal on Optimization}, 23(2):1214--1236, 2013.

\bibitem{wright1997primal}
S.~J. Wright.
\newblock {\em Primal-Dual Interior-Point Methods}.
\newblock SIAM, 1997.

\bibitem{xiao2024developing}
N.~Xiao, K.~Ding, X.~Hu, and K.-C. Toh.
\newblock Developing lagrangian-based methods for nonsmooth nonconvex optimization.
\newblock {\em arXiv preprint arXiv:2404.09438}, 2024.

\bibitem{xiao2023adam}
N.~Xiao, X.~Hu, X.~Liu, and K.-C. Toh.
\newblock Adam-family methods for nonsmooth optimization with convergence guarantees.
\newblock {\em arXiv preprint arXiv:2305.03938}, 2023.

\bibitem{xiao2023convergence}
N.~Xiao, X.~Hu, and K.-C. Toh.
\newblock Convergence guarantees for stochastic subgradient methods in nonsmooth nonconvex optimization.
\newblock {\em arXiv preprint arXiv:2307.10053}, 2023.

\bibitem{xiao2025exact}
N.~Xiao, T.~Tang, S.~Wang, and K.-C. Toh.
\newblock An exact penalty approach for equality constrained optimization over a convex set.
\newblock {\em arXiv preprint arXiv:2505.02495}, 2025.

\bibitem{yang2022bregman}
L.~Yang and K.-C. Toh.
\newblock Bregman proximal point algorithm revisited: A new inexact version and its inertial variant.
\newblock {\em SIAM J. Optimization}, 32(3):1523--1554, 2022.

\bibitem{zass2006nonnegative}
R.~Zass and A.~Shashua.
\newblock Nonnegative sparse pca.
\newblock {\em Advances in neural information processing systems}, 19, 2006.

\bibitem{zhang2024stochastic}
J.~Zhang.
\newblock Stochastic bregman proximal gradient method revisited: Kernel conditioning and painless variance reduction.
\newblock {\em arXiv preprint arXiv:2401.03155}, 2024.

\bibitem{zou2006sparse}
H.~Zou, T.~Hastie, and R.~Tibshirani.
\newblock Sparse principal component analysis.
\newblock {\em Journal of computational and graphical statistics}, 15(2):265--286, 2006.

\end{thebibliography}
\bibliographystyle{abbrv}

\end{document}